\tikzset{every picture/.append style={remember picture}}
\newtheorem{theorem}{Theorem}
\newtheorem{lemma}[theorem]{Lemma}
\newtheorem{proposition}[theorem]{Proposition}
\newtheorem{cor}{Corollary}
\theoremstyle{definition}
\newtheorem{definition}[theorem]{Definition}
\newtheorem{con}{Conjecture}
\theoremstyle{remark}
\newtheorem{remark}[theorem]{Remark}
\newtheorem{example}[theorem]{Example}
\newtheorem{claim}[]{Claim}
\numberwithin{equation}{section}
\newcommand{\Q}{\mathbb{Q}}
\newcounter{author}
\renewcommand*\author[1]{%
  \stepcounter{author}%
  \ifnum\c@author=1
    \gdef\@author{#1}%
  \else
    \xdef\@author{\unexpanded\expandafter{\@author\and#1}}%
  \fi
  \csgdef{author@\the\c@author}{#1}}
\newcommand*\email[1]{%
  \csgdef{email@\the\c@author}{#1}}
\newcommand*\address[1]{%
  \csgdef{address@\the\c@author}{#1}}
  \xdef\author@count{\the\c@author}%
\newcommand*\print@authors{%
  \ifnum\c@author>\author@count
  \else
    \print@author{\the\c@author}%
    \advance\c@author by 1
    \expandafter\print@authors
  \fi}
\newcommand*\print@author[1]{%
  \par\medskip
  \begin{tabular}{@{}l@{}}%
    \textsc{Addresses:}\\
    \csuse{address@#1}
  \end{tabular}}
\begin{document}

\title{New evidence for R\'emond's generalisation of Lehmer's conjecture}
\author{S. Checcoli and G. A. Dill}
\address{S. Checcoli: \\ Univ. Grenoble Alpes, CNRS, IF, 38000 Grenoble, France. \\ \href{mailto: sara.checcoli@univ-grenoble-alpes.fr}{sara.checcoli@univ-grenoble-alpes.fr}\\
G. A. Dill: \\ Institut de Math\'ematiques, Universit\'e de Neuch\^atel,\\ Rue Emile-Argand 11, 2000 Neuch\^atel, Switzerland.\\
\href{mailto: gabriel.dill@unine.ch}{gabriel.dill@unine.ch}}
\date{\today}
\newcommand\extrafootertext[1]{%
    \bgroup
    \renewcommand\thefootnote{\fnsymbol{footnote}}%
    \renewcommand\thempfootnote{\fnsymbol{mpfootnote}}%
    \footnotetext[0]{#1}%
    \egroup
}
\maketitle
\extrafootertext{2020 \textit{Mathematics Subject Classification}. 20K15, 11J95, 11R32, 11G50. Keywords: free groups, division closure, saturated closure, Kummer theory, tori, abelian varieties, almost split semiabelian varieties, height lower bound.}

\abstract{In this article, we generalise a result of Pottmeyer from the multiplicative group of the algebraic numbers to almost split semiabelian varieties defined over number fields. This concerns a consequence of R\'emond's generalisation of Lehmer's conjecture. Namely, for a finite rank subgroup $\Gamma$ of an almost split semiabelian variety $G$, we consider the group of rational points of $G$ over a finite extension of the field generated by the saturated closure of $\Gamma$, i.e. the division closure of the subgroup generated by $\Gamma$ and all its images under geometric endomorphisms of $G$. We show that this becomes a free group after one quotients out the saturated closure of $\Gamma$. The proof uses, amongst other ingredients, a criterion of Pottmeyer, which relies on a result of Pontryagin, together with a result from Kummer theory, of which we reproduce a proof by R\'emond.}

\section{Introduction}\label{intro}
Let $K$ be a number field and let $\overline{K}$ be a fixed algebraic closure of $K$. 
Let $G$ be a semiabelian variety defined over $K$. Recall that an \emph{isogeny} is a surjective algebraic group homomorphism whose kernel has finite order, called the \emph{degree} of the isogeny. We call two semiabelian varieties \emph{isogenous} if there exists an isogeny between their base changes to $\overline{K}$ (this defines an equivalence relation). We say that $G$ is \emph{split}
if $G$ is isomorphic over $K$ to $\mathbb{G}_m^t \times A$ where  $\mathbb{G}_m=\mathbb{G}_{m,K}$ is the one-dimensional torus over $K$, $t \in \mathbb{Z}_{\geq 0}$, and $A$ is an abelian variety defined over $K$. We say that $G$  is \emph{almost split} if it is isogenous to a split semiabelian variety or, equivalently, to a product of the form $\mathbb{G}_m^t \times A$ as above.
In this article we use the additive notation for the group law on $G$, denoting by $0$ the corresponding neutral element and by $\mathrm{End}(G)$ the ring of all algebraic group endomorphisms of the base change of $G$ to $\overline{K}$.

In \cite{Rem}, Rémond proposes a framework that unifies various statements concerning height lower bounds on tori and abelian varieties, which stem from Lehmer's conjecture. The latter, originally formulated for the group $\overline{\mathbb{Q}}^\times$, asserts that the product of the logarithmic Weil height of an algebraic number and its degree cannot be arbitrarily small without being zero. In his article, Rémond puts forward a conjecture \cite[Conjecture 3.4]{Rem} that significantly generalises Lehmer's conjecture and its so-called relative form. We discuss this in detail in Appendix \ref{Conjecture-1-rank0} and we also refer to \cite{Rem}, particularly Section 3 therein, for a detailed survey on these problems. 

Our main motivation for this article is a very special case of this conjecture. Before stating it, we need to introduce some notation.

We denote by $h$ be the Weil height on $\mathbb{G}_m^t$, that is, the function assigning to a point the sum of the (logarithmic) Weil heights of its coordinates. We also let $h_{\mathcal{L}}$ denote the N\'eron-Tate height on $A$ attached to an ample symmetric line bundle $\mathcal{L}$ on $A$ (see for instance \cite[Section 9.2]{BG} for more details).
We define the canonical height $h_G=h_{G,\mathcal{L}}$ on $G=\mathbb{G}_m^t\times A$ by $h_G(P_T,P_A) = h(P_T) + h_{\mathcal{L}}(P_A)$ for any point $(P_T,P_A) \in \mathbb{G}_m^t(\overline{K}) \times A(\overline{K}) = G(\overline{K})$.
Clearly, $h_G=h$ when $G=\mathbb{G}_m^t$ and $h_G=h_{\mathcal{L}}$ when $G=A$.

If $\Gamma$ is a subgroup of $G(\overline{K})$, its rank is, as usual, the cardinality of a maximal $\mathbb{Z}$-linearly independent subset (which is equal to the dimension of the $\mathbb{Q}$-vector space $\Gamma \otimes_{\mathbb{Z}} \mathbb{Q}$). We also denote by $\mathrm{End}(G)\cdot \Gamma$
the subgroup of $G(\overline{K})$ generated by all the elements of the form $\varphi(\gamma)$ with $\varphi\in \mathrm{End}(G)$ and $\gamma\in \Gamma$.  A central object in this article is $\Gamma_{\mathrm{sat}}$, the \emph{saturated closure of $\Gamma$}, defined as 
the group of points $P\in G(\overline{K})$ such that $nP\in \mathrm{End}(G)\cdot \Gamma$ for some $n\in \mathbb{N}\setminus\{0\}$.  For instance, if $\Gamma$ has rank zero, $\Gamma_{\mathrm{sat}}=G_{\mathrm{tor}}$, where  $G_{\mathrm{tor}}$ is the group of torsion points of $G$, while if $\mathrm{End}(G)=\mathbb{Z}$, $\Gamma_{\mathrm{sat}}$ is the so-called \emph{division closure} of $\Gamma$. If $\dim G > 0$, then the group $\Gamma_{\mathrm{sat}}$ always contains infinitely many points of height zero and if $\Gamma$ has positive rank, then $\Gamma_{\mathrm{sat}}$ also contains points of arbitrarily small non-zero height. 

We state here a very special case of R\'emond's conjecture \cite[Conjecture 3.4]{Rem}, as formulated in \cite[Conjecture 1.1(c)]{Pot} (for a proof that this conjecture follows from R\'emond's conjecture, see Appendix \ref{app-rem-conj}):
\begin{con}[R\'emond]\label{Rem-conj} 
Let $G$ be either $\mathbb{G}_m^t$ for some $t \in \mathbb{Z}_{\geq 0}$ or an abelian variety defined over a number
field $K$. Let $\Gamma$ be a subgroup of $G(\overline{K})$ of finite rank.
For every finite extension $L/K(\Gamma_{\mathrm{sat}})$ there is a constant $c_L>0$ such that
$h_G(P)\geq c_L$ for every $P\in G(L)\setminus \Gamma_{\mathrm{sat}}$.
\end{con}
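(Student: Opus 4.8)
Conjecture~\ref{Rem-conj} is open in full generality, so what follows is a plan of attack together with a note on its present reach. The natural first move is to reduce to the case $\operatorname{rk}\Gamma=0$: there one has $\Gamma_{\mathrm{sat}}=G_{\mathrm{tor}}$ and $K(\Gamma_{\mathrm{sat}})=K(G_{\mathrm{tor}})$, so the assertion says precisely that every finite extension of $K(G_{\mathrm{tor}})$ has the Bogomolov property for $h_G$, i.e. a uniform gap above the torsion points. For $G=\mathbb{G}_m^t$ this base case is the theorem of Amoroso--Dvornicich on $\mathbb{Q}^{\mathrm{ab}}=\mathbb{Q}(\mu_\infty)$ together with its extension to finite extensions of $\mathbb{Q}^{\mathrm{ab}}$ due to Amoroso--Zannier, applied coordinatewise; for an elliptic curve it follows from work of Baker--Silverman and Habegger; for higher-dimensional abelian varieties it is itself a hard open problem. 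So I would take the rank-$0$ statement as the (partly conjectural) base case and concentrate on the reduction.

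Fix a finite extension $L/K(\Gamma_{\mathrm{sat}})$ and a point $P\in G(L)\setminus\Gamma_{\mathrm{sat}}$. The field $K(\Gamma_{\mathrm{sat}})$ is the increasing union, over finitely generated subgroups $\Gamma_0\leq\Gamma$ and over $n\geq1$, of the fields $M_{n,\Gamma_0}$ obtained from $K(G_{\mathrm{tor}})$ by adjoining the $n$-division points of the elements of $\mathrm{End}(G)\cdot\Gamma_0$, and each $M_{n,\Gamma_0}$ is a \emph{finite} extension of $K(G_{\mathrm{tor}})$ because $\mathrm{End}(G)$ is a finitely generated $\mathbb{Z}$-module. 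Since $L$ is finite over this union, a short argument — take a primitive element of $L/K(\Gamma_{\mathrm{sat}})$, expand $P$ in it, and push the finitely many coefficients that occur down to some member of the union — shows that $P$ already lies in a \emph{finite} extension $M$ of $K(G_{\mathrm{tor}})$ of the shape $M_{n,\Gamma_0}(\theta)$, where $n,\Gamma_0,\theta$ depend on $P$. The base case then yields $h_G(P)\geq c_M>0$, but $[M:K(G_{\mathrm{tor}})]$ is unbounded as $P$ varies over $G(L)\setminus\Gamma_{\mathrm{sat}}$, so the constant $c_M$ a priori degenerates. The real content of the reduction is therefore to promote the base case to a bound that is \emph{uniform} along the tower $\bigl(M_{n,\Gamma_0}\bigr)_{n\geq1}$ for fixed $\Gamma_0$ — a relative Lehmer/Dobrowolski estimate over these Kummer-by-abelian fields. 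For this one brings in the quantitative Kummer theory reproduced in the paper, in the tradition of Bashmakov, Ribet, Bertrand and Hindry: up to a multiplicative constant independent of $n$, the degree $[M_{n,\Gamma_0}:K(G_{\mathrm{tor}})]$ is as large as it can possibly be, so the Galois group of the tower is essentially of maximal size. One then runs a Dobrowolski-type auxiliary construction — an auxiliary function or section vanishing to high order, local estimates at primes of good reduction, and Frobenius substitutions — whose degree input is now supplied by this Kummer theory rather than by $[\,\cdot:\mathbb{Q}]$, and finally absorbs the bounded extra extension by $\theta$.

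The genuinely hard point is exactly this uniformity. Even for $\operatorname{rk}\Gamma=0$ it is delicate: it is the passage from ``$\mathbb{Q}^{\mathrm{ab}}$ has the Bogomolov property'' to ``every finite extension of $\mathbb{Q}^{\mathrm{ab}}$ does'', which is the substance of the Amoroso--Zannier theorem, and for abelian varieties even the non-uniform base case is unknown beyond special cases. In positive rank one needs, on top of that, the Kummer-theoretic description of the Galois action to mesh uniformly in $n$ with the local and Frobenius estimates, and it is here that I expect the argument to stall in general. This is why the realistic target — and, I suspect, what the paper actually does — is not to prove the conjecture but to extract structural consequences of it, such as the freeness of $G(L)/\Gamma_{\mathrm{sat}}$, via Pontryagin's criterion for the freeness of countable abelian groups.
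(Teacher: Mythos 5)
You have correctly identified that the statement is a conjecture which remains open, and your honest assessment of where a direct attack stalls (uniformity of Dobrowolski-type bounds along the Kummer tower, and the unknown rank-$0$ base case for general abelian varieties) is accurate; the paper indeed does not prove Conjecture~\ref{Rem-conj} but only a necessary consequence of it (Theorem~\ref{cor-main}, via Pottmeyer's Proposition~\ref{prop-pott} and Pontryagin's criterion), exactly as you suspect in your last sentence. The one piece of the paper you do not capture is what it \emph{does} establish about this statement: Appendix~\ref{app-rem-conj} gives a formal derivation of Conjecture~\ref{Rem-conj} from (a weakening of) R\'emond's Conjecture 3.4, and this derivation is not a Dobrowolski-style auxiliary construction at all. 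Instead, one takes $V=\{P\}$, observes that $V$ is $\Gamma_{\mathrm{sat}}$-transverse because $(\Gamma_{\mathrm{sat}})_{\mathrm{sat}}=\Gamma_{\mathrm{sat}}$, locates the smallest $\Gamma_{\mathrm{sat}}$-torsion subvariety $H$ containing $P$ (whose existence requires showing that components of intersections of $\Gamma_{\mathrm{sat}}$-torsion subvarieties are again $\Gamma_{\mathrm{sat}}$-torsion, via Lemma~\ref{lem:section-homomorphism}), and then applies Th\'eor\`eme 3.7 of R\'emond together with the elementary bound $\omega_{K(\Gamma_{\mathrm{sat}})}^H(V)\leq [K(\Gamma_{\mathrm{sat}})(P):K(\Gamma_{\mathrm{sat}})]^{(1+\varepsilon)/\dim H}\leq [L:K(\Gamma_{\mathrm{sat}})]^{1+\varepsilon}$ to get $h_G(P)\geq c/[L:K(\Gamma_{\mathrm{sat}})]^{1+\varepsilon}$. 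So the uniformity in $n$ that worries you is entirely absorbed into the hypothesis (the Lehmer/Dobrowolski property of $\Gamma_{\mathrm{sat}}$ as a group), and the unconditional content the paper extracts this way is only the rank-$0$ case for tori and CM abelian varieties (Corollary~\ref{cor:torus-x-cm}), matching your list of known cases. Your proposed reduction to rank $0$ for positive-rank $\Gamma$ does not exist in the paper and, as you acknowledge, does not close.
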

This conjecture embodies the idea that in a field where small points (i.e. points of small height) can reasonably be found, the only small points are the ones you might expect.

More concretely, if  $G = \mathbb{G}_m$ and $K = \mathbb{Q}$, one way to construct small points in $G(\overline{\mathbb{Q}})$ is to take roots of a fixed algebraic number of growing degree.

For example, if $L$ is the field obtained by adjoining to $\mathbb{Q}$ all $n$-th roots of $2$ for $n$ growing larger and larger, then $L=\mathbb{Q}(\Gamma_{\mathrm{sat}})$ for  $\Gamma = \langle 2 \rangle$. Clearly, $L$ contains many points of ``small" height, for instance all points of the form $\zeta 2^{a/n}$, where $\zeta$ is any root of unity and $|a|< n$. On the other hand, these are precisely the points of $\Gamma_{\mathrm{sat}}$ of height less than $ \log 2$, so, according to the conjecture, there is some $c > 0$ such that every $\alpha \in L^\times$ with $h(\alpha) < c$ is of this form.

Conjecture \ref{Rem-conj} is widely open, the only known cases being for $\Gamma$ of rank zero. In this case, it is equivalent to the statement that, for all $K$ and $G$, the field $K(G_{\mathrm{tor}})$ has the so-called property (B) (with respect to $G$), i.e. that $G(K(G_{\mathrm{tor}}))$ contains no points of arbitrarily small positive height (see \cite{BZ01} for the original definition of property (B)). This has been proved for instance when $G$ is a torus (as a consequence of \cite{AZ}) or an abelian variety with CM (as a consequence of \cite{BS}).

For $\Gamma$ of positive rank, Conjecture \ref{Rem-conj} is fully open even in the simplest cases. For example, in the above-mentioned case where $G=\mathbb{G}_m$ and $\Gamma=\langle 2\rangle$, the most significant partial result towards Conjecture \ref{Rem-conj} has been obtained in \cite{Amo}, where it is proved, in particular, that the height of any element of the set $\mathbb{Q}(\zeta_3,2^{1/3},\zeta_{3^2},2^{1/3^2},\ldots)^\times \setminus \Gamma_{\mathrm{sat}}$ is bounded from below by $\log(3/2)/18$ (here, $\zeta_n$ denotes a primitive $n$-th root of unity). A non-effective elliptic analogue of this result has been proved in \cite{Ple24}.
A possible approach to the toric case of Conjecture \ref{Rem-conj} has been discussed in \cite{Ple1}.

Even though the analogue of Conjecture \ref{Rem-conj} is false for general semiabelian varieties because of the so-called \emph{deficient} or \emph{Ribet points} (see \cite[Section 5]{Rem}), Plessis proposed a variant of the conjecture for general semiabelian varieties \cite[Conjecture 1.3]{Ple2}. He proved moreover that his variant specialises to the analogue of Conjecture \ref{Rem-conj} if $G$ is a product of a power of $\mathbb{G}_m$ and an abelian variety (because in this case the deficient points are precisely the torsion points).
In Corollary \ref{cor:torus-x-cm} (see Appendix \ref{Conjecture-1-rank0}), we show how to deduce from the literature this analogue of Conjecture \ref{Rem-conj} when $\Gamma$ has rank zero and $G$ is the product of a torus and a CM abelian variety (notice that the case $G=\mathbb{G}_{m, \mathbb{Q}} \times E$ where $E$ is an elliptic curve defined over $\mathbb{Q}$ was already proved in \cite[Corollary 1.1]{Hab13}). 
Notice also that, in particular, Plessis' variant of Conjecture \ref{Rem-conj} implies that, for an arbitrary abelian variety $A$ defined over $\mathbb{Q}$, the field $\mathbb{Q}(A_{\mathrm{tor}})$ does not contain elements of arbitrarily small positive Weil height, which is an open problem that seems to be out of reach at the moment.

In \cite[Proposition 1.2 and its proof]{Pot}, Pottmeyer exhibits a necessary condition for the validity of Conjecture \ref{Rem-conj} in such a more general setting.
\begin{proposition}[Pottmeyer]\label{prop-pott} Let $G$ be the product of a power of $\mathbb{G}_m$ and an abelian variety defined over a number field $K$. Let $\Gamma$ be a finite rank subgroup of $G(\overline{K})$.
If the analogue of Conjecture \ref{Rem-conj} is true for $G$, then, for every finite extension $L/K(\Gamma_{\mathrm{sat}})$, the group $G(L)/\Gamma_{\mathrm{sat}}$ is free abelian.
\end{proposition}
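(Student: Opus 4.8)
The plan is to combine Pontryagin's criterion for freeness with a homogeneous ``height norm'' on $G(\overline{K})$, the point being that the conjecture is exactly what keeps this norm positive after passing to the quotient by $\Gamma_{\mathrm{sat}}$.

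First I would dispose of the soft part. Write $V := G(L)/\Gamma_{\mathrm{sat}}$. The group $V$ is torsion-free: if $nP \in \Gamma_{\mathrm{sat}}$ for some $n \geq 1$, then $mnP \in \mathrm{End}(G)\cdot\Gamma$ for a suitable $m$, so $P \in \Gamma_{\mathrm{sat}}$. It is also countable, since $L \subseteq \overline{K}$. By Pontryagin's criterion a countable torsion-free abelian group is free as soon as each of its subgroups of finite rank is finitely generated, so it suffices to establish the latter for $V$.

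The key construction is the function $\|\cdot\|_G$ on $G(\overline{K}) = \mathbb{G}_m^{t}(\overline{K}) \times A(\overline{K})$ defined by $\|(P_T,P_A)\|_G := h(P_T) + \sqrt{h_{\mathcal{L}}(P_A)}$. Since $h$ is subadditive on $\mathbb{G}_m^{t}$ and $\sqrt{h_{\mathcal{L}}}$ is a seminorm on $A(\overline{K})$ (the N\'eron--Tate height being a positive semidefinite quadratic form, as $\mathcal{L}$ is ample symmetric), $\|\cdot\|_G$ is subadditive; extracting the square root of the quadratic N\'eron--Tate height is what makes it homogeneous, $\|nP\|_G = |n|\,\|P\|_G$, while $\|P\|_G = 0$ precisely for $P \in G_{\mathrm{tor}}$. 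I would then define the essential norm $N\colon V \to \mathbb{R}_{\geq 0}$ by $N(\overline{P}) := \inf\{\|Q\|_G : Q \in G(L),\ Q - P \in \Gamma_{\mathrm{sat}}\}$ and verify that it is a seminorm on $V$. Subadditivity is immediate; the delicate point — which I expect to be the crux of the argument — is homogeneity. Given $Q \in G(L)$ with $\overline{Q} = n\overline{P}$, one writes $Q = nP + \gamma$ with $\gamma \in \Gamma_{\mathrm{sat}} \cap G(L)$, picks $\delta \in G(\overline{K})$ with $n\delta = \gamma$, and notes that $\delta \in \Gamma_{\mathrm{sat}}$ (once again because $\Gamma_{\mathrm{sat}}$ is saturated), hence $\delta \in G(K(\Gamma_{\mathrm{sat}})) \subseteq G(L)$; then $Q = n(P+\delta)$ with $\overline{P+\delta} = \overline{P}$, so $\|Q\|_G = |n|\,\|P+\delta\|_G \geq |n|\,N(\overline{P})$, and with the obvious reverse inequality this gives $N(n\overline{P}) = |n|\,N(\overline{P})$. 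In other words, the auxiliary divisions needed to compute $N$ can always be absorbed into $\Gamma_{\mathrm{sat}} \cap G(L)$, and this is the one place where the precise structure of the saturated closure is really used.

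It is here, and essentially only here, that the conjecture enters: for $\overline{P} \neq 0$ every representative $Q$ lies in $G(L) \setminus \Gamma_{\mathrm{sat}}$, hence $h_G(Q) = h(Q_T) + h_{\mathcal{L}}(Q_A) \geq c_L$, and comparing $h_G$ with $\|\cdot\|_G$ — splitting according to whether $h_{\mathcal{L}}(Q_A) \geq 1$ or not, and using $\sqrt{t} \geq t$ for $t \in [0,1]$ — yields $\|Q\|_G \geq \min(1, c_L)$, so $N(\overline{P}) \geq \min(1, c_L) =: c > 0$. To conclude, let $\overline{H} \leq V$ have finite rank $r$. Restricting $N$ to $\overline{H}$ and extending by rational homogeneity and continuity gives a seminorm on $\overline{H} \otimes \mathbb{R} \cong \mathbb{R}^{r}$ that is still $\geq c$ on $\overline{H} \setminus \{0\}$; in particular $\overline{H}$ meets the kernel of this seminorm trivially, so quotienting out the kernel realises $\overline{H}$ as a $c$-separated, hence discrete, subgroup of a finite-dimensional normed space, which is therefore finitely generated. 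By Pontryagin's criterion, $V = G(L)/\Gamma_{\mathrm{sat}}$ is free abelian.
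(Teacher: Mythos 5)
Your argument is correct, and it follows the same strategy as the proof the paper cites (Pottmeyer's \cite[Lemma 2.6]{Pot}, building on \cite[Proposition 1.1]{GHP15} and \cite[Corollary 1.2]{Hab13}): build a norm on $G(L)/\Gamma_{\mathrm{sat}}$ out of $h$ on the toric factor and $\sqrt{h_{\mathcal{L}}}$ on the abelian factor, take the infimum over representatives modulo $\Gamma_{\mathrm{sat}}$ (your homogeneity argument, absorbing the division point $\delta$ into $\Gamma_{\mathrm{sat}}\cap G(L)$ via $(\Gamma_{\mathrm{sat}})_{\mathrm{div}}=\Gamma_{\mathrm{sat}}$, is exactly the point where saturation is needed and is handled correctly), and use the conjectured lower bound to make this norm discrete. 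The only place you diverge is at the end: the cited proof invokes the theorem that an abelian group admitting a discrete norm is free (Lawrence--Zorzitto/Stepr\=ans), whereas you re-derive the relevant case by hand via Pontryagin's criterion together with the observation that a finite-rank subgroup with a discrete seminorm embeds as a discrete subgroup of a finite-dimensional normed space after killing the kernel of the extended seminorm. This makes the freeness step self-contained at no real cost, and all the auxiliary verifications (torsion-freeness and countability of the quotient, the comparison $\lVert Q\rVert_G \geq \min(1,c_L)$ via the case split on $h_{\mathcal{L}}(Q_A)$) are sound.
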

The proof of this result (see \cite[Lemma 2.6]{Pot}) combines two logical equivalences: the first one is that an abelian group is free if and only if it admits a discrete norm; the second one is that Conjecture \ref{Rem-conj} is equivalent to the statement that a specific norm on $G(L)/\Gamma_{\mathrm{sat}}$ is discrete. Precursors of Proposition \ref{prop-pott} are \cite[Corollary 1.2]{Hab13} and \cite[Proposition 1.1]{GHP15}.

It follows from \cite[Lemma A.7 and Remark A.8]{BHP} that the necessary condition of Proposition \ref{prop-pott} is satisfied when $\Gamma$ has rank zero, that is when $\Gamma_{\mathrm{sat}}=G_{\mathrm{tor}}$.
In \cite[Theorem 1.3]{Pot}, Pottmeyer proves that this is still the case for groups of arbitrary rank when $G=\mathbb{G}_m$. 

Our main result shows that the necessary condition as in Proposition \ref{prop-pott} is always satisfied by almost split semiabelian varieties, generalising both \cite[Lemma A.7]{BHP} and \cite[Theorem 1.3]{Pot}.
More precisely, we prove the following:
\begin{theorem}\label{cor-main}
Let $G$ be an almost split semiabelian variety over a number field $K$.  Let $\Gamma \subset G(\overline{K})$  be a subgroup of finite rank and let $L/K(\Gamma_{\mathrm{sat}})$ be a finite extension. Then the group $G(L)/\Gamma_{\mathrm{sat}}$ is free abelian.
\end{theorem}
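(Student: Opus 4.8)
The plan is to obtain freeness \emph{directly} — not via R\'emond's conjecture, which is open — by combining Pontryagin's criterion for free abelian groups with a Kummer-theoretic lower bound for field degrees. \emph{First, a reduction to the split case.} Since $G$ is almost split, there is an isogeny $\varphi\colon G\to G'$ onto a split $G'=\mathbb{G}_m^t\times A$; enlarging $K$ (and $L$ accordingly), we may assume that $\varphi$, $G'$, and a complementary isogeny $\hat\varphi$ with $\hat\varphi\circ\varphi=[\deg\varphi]$ are all defined over $K$. From $\varphi\circ\psi=\tfrac{1}{\deg\varphi}(\varphi\psi\hat\varphi)\circ\varphi$ one gets $\deg(\varphi)\cdot\varphi(\mathrm{End}(G)\cdot\Gamma)\subseteq\mathrm{End}(G')\cdot\varphi(\Gamma)$ and, symmetrically, $\hat\varphi(\mathrm{End}(G')\cdot\varphi(\Gamma))\subseteq\mathrm{End}(G)\cdot\Gamma$; hence $\varphi(\Gamma_{\mathrm{sat}})\subseteq\varphi(\Gamma)_{\mathrm{sat}}$ and $\hat\varphi(\varphi(\Gamma)_{\mathrm{sat}})\subseteq\Gamma_{\mathrm{sat}}$. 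Since $\Gamma_{\mathrm{sat}}$ is divisible and contains $G_{\mathrm{tor}}$, any $\sigma\in\Gal(\overline{K}/K(\varphi(\Gamma)_{\mathrm{sat}}))$ fixing $G_{\mathrm{tor}}$ fixes $\Gamma_{\mathrm{sat}}$ pointwise (it moves each $P\in\Gamma_{\mathrm{sat}}$ by a point of $\ker\varphi\subseteq G[\deg\varphi]$, and $P\mapsto\sigma(P)-P$ is a homomorphism from the divisible group $\Gamma_{\mathrm{sat}}$ to the bounded group $G[\deg\varphi]$, hence zero); therefore $K(\Gamma_{\mathrm{sat}})$ and $K(\varphi(\Gamma)_{\mathrm{sat}})$ differ only by a finite extension, so $L$ is finite over $K(\varphi(\Gamma)_{\mathrm{sat}})$. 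Finally, $P+\Gamma_{\mathrm{sat}}\mapsto\varphi(P)+\varphi(\Gamma)_{\mathrm{sat}}$ is an injection $G(L)/\Gamma_{\mathrm{sat}}\hookrightarrow G'(L)/\varphi(\Gamma)_{\mathrm{sat}}$ (if $\varphi(P)\in\varphi(\Gamma)_{\mathrm{sat}}$ then $\deg(\varphi)\cdot P=\hat\varphi(\varphi(P))\in\Gamma_{\mathrm{sat}}$, so $P\in\Gamma_{\mathrm{sat}}$ by division-closedness). As subgroups of free abelian groups are free, it suffices to treat $G'$; so from now on $G=\mathbb{G}_m^t\times A$.

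\emph{Next, Pontryagin's criterion.} The group $G(L)/\Gamma_{\mathrm{sat}}$ is countable, since $L\subseteq\overline{\mathbb{Q}}$, and torsion-free, since $\Gamma_{\mathrm{sat}}$ is division-closed in $G(\overline{K})$ (if $nP\in\Gamma_{\mathrm{sat}}$ then $mnP\in\mathrm{End}(G)\cdot\Gamma$ for some $m\geq 1$, so $P\in\Gamma_{\mathrm{sat}}$) and hence in $G(L)$. By the criterion of Pottmeyer, which rests on Pontryagin's theorem that a countable torsion-free abelian group is free as soon as each of its finite-rank subgroups is finitely generated, it is enough to verify this last property for $G(L)/\Gamma_{\mathrm{sat}}$. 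Since every finite-rank subgroup is contained in its purification, which has the same finite rank, and subgroups of finitely generated abelian groups are finitely generated, it suffices to show: for all $P_1,\dots,P_r\in G(L)$, setting $\Delta=\langle\Gamma_{\mathrm{sat}},P_1,\dots,P_r\rangle$ and $\overline{\Delta}=\{Q\in G(L):nQ\in\Delta\text{ for some }n\geq 1\}$, the group $\overline{\Delta}/\Delta$ is finite (for then $\overline{\Delta}/\Gamma_{\mathrm{sat}}$ is finitely generated). Because $\overline{\Delta}/\Gamma_{\mathrm{sat}}$ has finite rank, $\overline{\Delta}/\Delta$ embeds into a group of the form $(\mathbb{Q}/\mathbb{Z})^s$, so it is finite as soon as it has bounded exponent.

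\emph{Finally, the Kummer theory input.} Let $Q\in\overline{\Delta}$ with $\overline{Q}\neq 0$ in $\overline{\Delta}/\Delta$, and let $N\geq 1$ be minimal with $NQ\in\Delta$; write $NQ=\gamma+R$ with $\gamma\in\Gamma_{\mathrm{sat}}$ and $R\in\langle P_1,\dots,P_r\rangle$. As $\Gamma_{\mathrm{sat}}$ is divisible, $\gamma$ has an $N$-th root in $\Gamma_{\mathrm{sat}}\subseteq G(K(\Gamma_{\mathrm{sat}}))$; subtracting it from $Q$ we may assume $NQ=R$, and then $R\notin\Gamma_{\mathrm{sat}}$, for otherwise $Q\in\Gamma_{\mathrm{sat}}\subseteq\Delta$. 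Put $F=K(\Gamma_{\mathrm{sat}})$ and $F'=F(P_1,\dots,P_r)\subseteq L$, a finite extension of $F$ containing all of $G_{\mathrm{tor}}$ (as $G_{\mathrm{tor}}\subseteq\Gamma_{\mathrm{sat}}$); thus $Q$ is an $N$-division point over $F'$ of the point $R$ of the finitely generated group $\langle P_1,\dots,P_r\rangle\subseteq G(F')$, and $F'(Q)\subseteq L$. The Kummer theory result whose proof by R\'emond is reproduced provides a lower bound for the degree over $F'$ of the field generated by the $N$-division points of $\langle P_1,\dots,P_r\rangle$, growing to infinity with $N$ (along prime powers, say) as long as that group is not contained in $\Gamma_{\mathrm{sat}}$, with constants depending only on $G$, on $F'$, and on $\langle P_1,\dots,P_r\rangle$; in particular it also bounds the divisibility of $\langle P_1,\dots,P_r\rangle$ inside $G(F')$. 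Since $[L:F']<\infty$, only boundedly many such division points lie in $L$, which forces $N$ to be bounded independently of $Q$; hence $\overline{\Delta}/\Delta$ has bounded exponent and is finite, completing the proof.

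\emph{The main obstacle.} Everything hinges on the last step: establishing the Kummer-theoretic degree lower bound over the \emph{infinite} base field $K(\Gamma_{\mathrm{sat}})$ — which already contains all of $G_{\mathrm{tor}}$ and is saturated with respect to $\Gamma$ — with the right uniformity, and in the generality of an almost split semiabelian variety, where the torus and abelian directions and the divisibility already supplied by $\Gamma_{\mathrm{sat}}$ have to be disentangled. This is precisely what R\'emond's Kummer theory delivers. A secondary, more technical point is the isogeny reduction, where the saturated closures of $\Gamma$ and of $\varphi(\Gamma)$ correspond only after passing to commensurable subgroups and adjoining torsion.
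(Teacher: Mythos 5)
Your overall architecture matches the paper's: reduce by an isogeny to the split case, invoke Pontryagin's criterion in the form of Pottmeyer's lemma, and feed in Kummer theory at the end. The isogeny reduction and the Pontryagin step are essentially correct (your observation that a homomorphism from the divisible group $\Gamma_{\mathrm{sat}}$ to the bounded group $\ker\varphi$ vanishes is the same device the paper uses to compare $K(\Gamma_{\mathrm{sat}})$ with the field generated by the saturated closure of the image group, and your reduction of "finite-rank subgroups are finitely generated" to the finiteness of $\overline{\Delta}/\Delta$ is sound).

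The gap is in the final step. You apply "R\'emond's Kummer theory" over the field $F'=K(\Gamma_{\mathrm{sat}})(P_1,\dots,P_r)$ to the group $\langle P_1,\dots,P_r\rangle\subset G(L)$. But Theorem \ref{main-GR} is a statement about a point $P\in G(K)$ with $K$ a \emph{number field}: its proof uses Dirichlet's unit theorem, Mordell--Weil, Serre's homothety theorem, and Faltings's finiteness of isogeny classes, none of which survive over the infinite field $K(\Gamma_{\mathrm{sat}})$, and your points $P_i$ need not even be algebraic over a common number field in any controlled way. No degree lower bound over $F'$, uniform in $N$ and valid for arbitrary $P_1,\dots,P_r\in G(L)$, is available off the shelf; producing a usable substitute is exactly the technical heart of the paper. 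The paper circumvents this by routing the Pontryagin criterion differently: condition (i) of Lemma \ref{Lukas-with-F} is handled by Mordell--Weil/Dirichlet over number fields (Proposition \ref{Pot-prop3.3}), and condition (ii) reduces the problem to points $P\in G(K(\Gamma_{\mathrm{sat}}))\cap G(K)_{\mathrm{div}}$, i.e.\ division points of points rational over a \emph{number field} (Theorem \ref{lem:arbitraryrank-var-ab}). Even then, one cannot just quote Kummer theory: one must choose generators of $\Gamma_{\mathrm{sat}}$ over $K$ with controlled saturation defect (Lemma \ref{lem:independent-generators-tor}, which is what makes the exponent $b_{P'}$ in Theorem \ref{main-GR} uniform in the auxiliary point), split the endomorphism ring and $\Gamma_{\mathrm{sat}}$ into toric and abelian parts, and exhibit Galois automorphisms that fix $K(\Gamma_{\mathrm{sat}})$ pointwise while acting prescribed on the division points of the new directions. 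Your "main obstacle" paragraph correctly identifies where the difficulty sits, but the assertion that R\'emond's theorem "delivers" the bound over the infinite base field is precisely what is not true; that transfer is the content of Sections \ref{splitting-prelim}--\ref{sec-end-of-proof}.
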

A natural question is whether the stronger conclusion that $G(L)/\Gamma_{\mathrm{sat}}$ is free of infinite rank holds in the setting of Theorem \ref{cor-main}. This is clearly the case if 
the toric part of $G$ is positive-dimensional and Habegger asked whether the same holds if $G$ is an abelian variety. We do not know the answer, but note that a long-standing (and to our knowledge open) problem of Frey and Jarden is to decide whether $A(K^{\mathrm{ab}})$ has infinite rank for every abelian variety $A$ over a number field $K$, where $K^{\mathrm{ab}}$ denotes the maximal abelian extension of $K$ (see \cite{FJ74}). In the same article, Frey and Jarden showed that this holds for every elliptic curve $E$ over $\mathbb{Q}$, which implies that $E(\mathbb{Q}(\Gamma_{\mathrm{sat}}))/\Gamma_{\mathrm{sat}}$ is indeed free of infinite rank (see also \cite[Corollary 1.2]{Hab13} for the case where $\Gamma_{\mathrm{sat}} = E_{\mathrm{tor}}$).

The proof of Theorem \ref{cor-main} relies on several reduction steps. The first step, done in Section \ref{red-step-1}  (see Proposition \ref{reduction-step}), shows that the validity of Theorem \ref{cor-main} is established by proving it in the case where $G = \mathbb{G}_m^t \times A$, $A$ is an abelian variety such that \emph{all geometric endomorphisms of $A$ are defined over $K$} (i.e. all endomorphisms of the abelian variety obtained by base-changing $A$ to  $\overline{K}$ are already defined over $K$), and $\Gamma\subset G(K)$ is finitely generated. This step relies only on basic facts about algebraic groups and abelian varieties. The second step, detailed in Section \ref{reduction-second-step} (see Proposition \ref{theorem3-implies-theorem2}), combines a criterion of Pottmeyer in \cite[Lemma 3.1]{Pot} (recalled in Lemma \ref{Lukas-with-F}), concerning the freeness of abelian groups and based on a result of Pontryagin, with the result \cite[Proposition 3.3]{Pot} (recalled in Proposition \ref{Pot-prop3.3}) to further reduce the problem to showing the following result, which represents the main technical contribution of our work. 

\begin{theorem}\label{lem:arbitraryrank-var-ab}
Let $G = \mathbb{G}_{m,K}^{t} \times A$, where $K$ is a number field, $t \in \mathbb{Z}_{\geq 0}$, and $A$ is an abelian variety over $K$ having all geometric endomorphisms defined over $K$. Let $\Gamma \subset G(K)$ be a finitely generated subgroup. Then there exists a positive integer $d= d(G,\Gamma,K)$ such that, for any $P \in G(K(\Gamma_{\mathrm{sat}})) \cap G(K)_{\mathrm{div}}$, we have $d P \in G(K)+\Gamma_{\mathrm{sat}}$.
\end{theorem}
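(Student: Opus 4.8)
\emph{Strategy.} The plan is to reduce the statement to a Kummer-theoretic computation over $K(G_{\mathrm{tor}})$, where the ``defect'' of $P$ from lying in $G(K)+\Gamma_{\mathrm{sat}}$ is controlled by R\'emond's Kummer bound together with the Tate conjecture. First I would simplify the data. Since $\mathrm{Hom}(\mathbb{G}_m,A)=\mathrm{Hom}(A,\mathbb{G}_m)=0$, the ring $\mathrm{End}(G)=M_t(\Z)\times\mathrm{End}(A)$ is a finitely generated $\Z$-module, so $\mathrm{End}(G)\cdot\Gamma$ is a finitely generated subgroup of $G(K)$; as $G_{\mathrm{tor}}$ lies in every division closure, one may replace it by a finitely generated \emph{free} subgroup $\Lambda\cong\Z^r$ of $G(K)$ with $\Gamma_{\mathrm{sat}}$ equal to the division closure of $\Lambda$ while keeping $\mathrm{End}(G)\cdot\Lambda\subseteq\mathrm{End}(G)\cdot\Gamma$. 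Writing $K_{\mathrm{tor}}=K(G_{\mathrm{tor}})$, one has $K(\Gamma_{\mathrm{sat}})=\bigcup_n K(G[n],n^{-1}\Lambda)\supseteq K_{\mathrm{tor}}$, and $K(\Gamma_{\mathrm{sat}})/K_{\mathrm{tor}}$ is (pro‑)abelian because for $\sigma$ fixing $K_{\mathrm{tor}}$ the map $\sigma\mapsto\sigma(n^{-1}\gamma)-n^{-1}\gamma\in G[n]$ is a homomorphism. Finally, as the quotient of $G(K(\Gamma_{\mathrm{sat}}))\cap G(K)_{\mathrm{div}}$ by $G(K)+\Gamma_{\mathrm{sat}}$ is torsion, it is enough to bound its exponent (and, if convenient, one may localize at each prime).

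The heart of the proof is the following. Given $P$ as in the statement, fix $n$ with $nP=Q\in G(K)$; then $\psi_P\colon\sigma\mapsto\sigma P-P$ is a homomorphism from $\mathcal{K}:=\Gal(K(\Gamma_{\mathrm{sat}})/K_{\mathrm{tor}})$ to $G[n]$, and the cocycle relation forces it to be $\mathcal{T}$‑equivariant, where $\mathcal{T}:=\Gal(K_{\mathrm{tor}}/K)$ acts on $\mathcal{K}$ by conjugation and on $G[n]$ naturally. The Kummer‑theoretic input of R\'emond furnishes a constant $c_1=c_1(G,\Lambda,K)$ such that the Kummer map embeds $\mathcal{K}$, $\mathcal{T}$‑equivariantly, with index dividing $c_1$, into $\mathrm{Hom}_{\Z}(\Lambda,T(G))$, where $T(G)=\varprojlim_n G[n]$; hence $c_1\psi_P$ extends to a $\mathcal{T}$‑equivariant homomorphism $\Psi\colon\mathrm{Hom}_{\Z}(\Lambda,T(G))\to G[n]$. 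I would then use the Tate conjecture (Faltings), the hypothesis that all geometric endomorphisms of $A$ are defined over $K$, and $\mathrm{Hom}(\mathbb{G}_m,A)=0=\mathrm{Hom}(A,\mathbb{G}_m)$ (for the torus factor all $\Z/n$‑linear maps are automatically equivariant via the cyclotomic twist), to see that $\mathcal{T}$‑equivariant homomorphisms $\mathrm{Hom}_{\Z}(\Lambda,T(G))\to G[n]$ are, up to a further bounded index $c_2=c_2(G,K)$, precisely those of the form $\phi\mapsto\sum_i\varphi_i(\phi(\gamma_i))\bmod n$ with $\varphi_i\in\mathrm{End}(G)$ and $\gamma_1,\dots,\gamma_r$ a basis of $\Lambda$. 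Putting $d_0=c_1c_2$ and $S':=\sum_i\varphi_i(\widetilde{\gamma_i})$ for chosen $n$‑division points $\widetilde{\gamma_i}$ of $\gamma_i$, one obtains $\sigma(d_0P)-d_0P=\sigma S'-S'$ for every $\sigma$ fixing $K_{\mathrm{tor}}$; since $nS'=\sum_i\varphi_i(\gamma_i)\in\mathrm{End}(G)\cdot\Lambda\subseteq\mathrm{End}(G)\cdot\Gamma$, we have $S'\in\Gamma_{\mathrm{sat}}$, so $R:=d_0P-S'$ lies in $G(K_{\mathrm{tor}})\cap G(K)_{\mathrm{div}}$.

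It then remains to bound the exponent of $G(K_{\mathrm{tor}})\cap G(K)_{\mathrm{div}}$ modulo $G(K)+G_{\mathrm{tor}}$; together with the previous step this gives the theorem, with $d$ the product of $d_0$ and this exponent. Using $G=\mathbb{G}_{m}^{t}\times A$ I would treat the two factors separately. For $A$: a $K_{\mathrm{tor}}$‑rational $n$‑division point of $Q\in A(K)$ makes $K(A_{\mathrm{tor}},n^{-1}Q)/K(A_{\mathrm{tor}})$ trivial, so R\'emond's Kummer bound applied with $\Lambda$ replaced by the finitely generated group $A(K)$ bounds the order of $Q$ in $A(K)/(nA(K)+A(K)_{\mathrm{tors}})$, which forces a bounded multiple of $R$ into $A(K)+A_{\mathrm{tor}}$. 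For $\mathbb{G}_{m}^{t}$: a radical element of $K(G_{\mathrm{tor}})^{\times}=K(\mu_{\infty},A_{\mathrm{tor}})^{\times}$ over $K$ generates an extension that is abelian over $K(\mu_\infty)$; since $\Gal(K(\mu_\infty,A_{\mathrm{tor}})/K(\mu_\infty))$ has finite abelianization (its image in $\prod_\ell\GL_{2g}(\Z_\ell)$ is open with only bounded ``$\SL$‑type'' abelian quotients), such an element is, up to a bounded power, a radical element of $K(\mu_\infty)^{\times}$ over $K$, and the classical theory of abelian radical extensions (in the spirit of Schinzel) puts a further bounded power of it into $K^{\times}\cdot\mu_\infty$. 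The part I expect to be the main obstacle is the step of the second paragraph: showing that $\psi_P$ is forced to be ``of $\mathrm{End}(G)\cdot\Gamma$‑type'' — this is exactly where both R\'emond's Kummer boundedness and the Tate conjecture enter essentially, and it is the structural reason the statement must be phrased with the saturated (rather than merely the division) closure; the remaining difficulty is bookkeeping, namely checking that all of the constants $c_1,c_2,\dots$ can be taken uniform in $n$ and in $P$.
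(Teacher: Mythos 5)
Your architecture is genuinely different from the paper's: you first try to pull $P$ down to $G(K(G_{\mathrm{tor}}))\cap G(K)_{\mathrm{div}}$ modulo $\Gamma_{\mathrm{sat}}$ by classifying equivariant Kummer homomorphisms, and then solve a residual rank-zero problem over $K(G_{\mathrm{tor}})$. The paper never passes through $K(G_{\mathrm{tor}})$; it writes $nP$ explicitly as a $\mathbb{Z}$-linear combination of $n$-division points of carefully chosen generators, uses Theorem \ref{main-GR} only through its \emph{lower} bound on the Kummer image to produce automorphisms fixing $P$ while moving those division points in prescribed directions, and reads off divisibility constraints on the coefficients. However, two steps of your proposal are genuinely problematic.

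First, the toric finale fails as stated: $\Gal(K(\mu_\infty,A_{\mathrm{tor}})/K(\mu_\infty))$ need not have finite abelianization. For a CM abelian variety $A$ (over a field containing the reflex field) the group $\Gal(K(A_{\mathrm{tor}})/K)$ is itself abelian and infinite, and the ``open image in $\prod_\ell\GL_{2g}(\Z_\ell)$'' you invoke is false in that case and conjectural in general. So you cannot descend a radical element from $K(\mu_\infty,A_{\mathrm{tor}})$ to $K(\mu_\infty)$ by a bounded power in this way. The correct mechanism (as in \cite{BHP} and in the paper) is to apply Theorem \ref{main-GR} to the \emph{product} $G=\mathbb{G}_m^t\times A$, so that the inclusion $\Delta b\,\mu_n\subset\mathrm{Im}(\rho)$ already holds over $K(G[n])\supset K(\mu_n,A[n])$; even then one must make the constant $b$ uniform over the relevant points of the torus, which is exactly what Lemma \ref{lem:independent-generators-tor}\eqref{lemma13-iii} provides (the exponent divides the number of roots of unity of $K$) and which your proposal does not address. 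Second, the step you yourself flag as the main obstacle --- that every $\mathcal{T}$-equivariant homomorphism $\mathrm{Hom}_{\Z}(\Lambda,T(G))\to G[n]$ is, up to an index bounded \emph{independently of $n$}, of the form $\phi\mapsto\sum_i\varphi_i(\phi(\gamma_i))$ with $\varphi_i\in\mathrm{End}(G)$ --- is not a consequence of the Tate conjecture alone: Faltings gives such a statement for maps into $T_\ell(G)$, and transferring it to $G[n]$ with a bound uniform in $n$ requires the finiteness of isogeny classes (this is precisely the role of $\Delta_2$ and of \cite[Lemma A.4]{BHP} in the proof of Theorem \ref{main-GR}); moreover the Kummer image lands in $\mathrm{Hom}(\Lambda,T(G_\Lambda))$ for the possibly smaller group $G_\Lambda$ of \eqref{def-G_p}, so the classification must be carried out there and then extended to $G$ via Poincar\'e reducibility. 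As written, these gaps leave the proof incomplete, although the overall two-stage strategy could likely be repaired using the same tools the paper deploys.
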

When $\Gamma$ has rank zero, this result essentially follows from
\cite[Appendix A]{BHP} (as explained in the proof of \cite[Theorem 3.2]{Pot}). To prove Theorem \ref{lem:arbitraryrank-var-ab} for $\Gamma$ of arbitrary finite rank, we proceed as follows: first, we show that both $\mathrm{End}(G)$ and $\Gamma_{\mathrm{sat}}$ have some ``split structure'' (see Section \ref{splitting-prelim}). This allows us to work on the point $P$ coordinatewise on the toric and the abelian factor. However, we cannot study the different coordinates completely separately from each other as the field $K(\Gamma_{\mathrm{sat}})$ depends on all factors at once. To find an integer $d$ that works for every coordinate (and that is independent of $P$), we first show that we can find a finite set of points on $G(K)$ that, in some sense, ``generate" $\Gamma_{\mathrm{sat}}$ and enlarge it to a finite set of points that ``generate" a group (with some useful properties) containing the coordinates of our fixed point $P$. For the abelian factor, this is trivial because of the Mordell-Weil theorem, but for the toric factor, we need to do some work in Section \ref{prel-gen-Gamma-sat}. We then show in Section \ref{sec-end-of-proof} that we can essentially express the coordinates of $P$ as $\mathbb{Z}$-linear combinations of division points of such points. A classical result from Kummer theory (Theorem \ref{main-GR} in Section \ref{sec-Kummer}), of which we reproduce a proof by R\'emond in Appendix \ref{sec-proof-GR-main}, allows us to complete the proof giving a uniform bound for $d$. We remark that also the proofs of the above-mentioned results from \cite{BHP,Pot} made use of  Kummer theory. However, in our more general context, we need a stronger, more precise statement and its application is more involved.

In Appendix \ref{Conjecture-1-rank0}, we detail how R\'emond's generalisation of Lehmer's conjecture
\cite[Conjecture 3.4]{Rem} implies Conjecture \ref{Rem-conj}. We also show how to combine known results towards the relative form of Lehmer's problem to deduce the validity of Conjecture \ref{Rem-conj} for groups of rank 0 in certain semiabelian varieties.

\section{Preliminaries on commutative algebraic groups}\label{sec-prelim-alg-groups}
\subsection{On division and saturated closures}
Let $K$ be a number field and let $\overline{K}$ be a fixed algebraic closure of $K$. 
Let $G$ be a commutative algebraic group defined over $K$.
We recall that we use the additive notation for the group law on $G$, denoting by $0$ the corresponding neutral element and by $\mathrm{End}(G)$ the ring of all algebraic group endomorphisms of $G$ defined over $\overline{K}$. We write $\mathrm{End}_K(G)$ for the ring of  algebraic group endomorphisms of $G$ that are defined over $K$.

For a positive integer $n$ and a point $P\in G(\overline{K})$, we denote by $nP$ the image of $P$ under  multiplication by $n$ on $G$. Furthermore, we denote by \[G[n] = \{P \in G(\overline{K})\mid nP = 0\}\] the group of torsion points of $G$ of order dividing $n$ and by
\[G_{\mathrm{tor}}=\bigcup_{n\in \mathbb{Z}_{>0}} G[n]\]
 the torsion subgroup of $G(\overline{K})$. 
 We further set \begin{equation}\label{G_K_tor}G(K)_\mathrm{tor}=G_{\mathrm{tor}}\cap G(K).\end{equation}
If $S\subset G(\overline{K})$ is a set of points, we denote by \begin{equation}\label{def-End(A)-set}\mathrm{End}(G)\cdot S \quad \text{and} \quad \mathrm{End}_K(G)\cdot S\end{equation}
the subgroups of $G(\overline{K})$ generated by all the elements of the form $\varphi(x)$ with $x\in S$ and $\varphi\in \mathrm{End}(G)$ or $\varphi \in \mathrm{End}_K(G)$ respectively. If $S = \{P\}$ is a singleton, we also write $\mathrm{End}(G)\cdot P$ and $\mathrm{End}_K(G)\cdot P$ respectively.

We are now going to define some objects that will be central to our work.
\begin{definition}\label{def-Gamma-div-sat}
Let $\Gamma$ be a subgroup of $G(\overline{K})$. We define:
\begin{enumerate}[(i)]
\item the \emph{division closure of $\Gamma$} as
\[\Gamma_{\mathrm{div}} =\bigcup_{n\in \mathbb{N}\setminus\{0\}} \{P \in G(\overline{K})\mid nP \in \Gamma\};\]
\item the \emph{saturated closure of $\Gamma$} as
 \[\Gamma_{\mathrm{sat}} = (\mathrm{End}(G)\cdot\Gamma)_{\mathrm{div}},\]
 where $\mathrm{End}(G)\cdot\Gamma$ denotes the subgroup of $G(\overline{K})$ defined in \eqref{def-End(A)-set}. Equivalently, $P\in \Gamma_{\mathrm{sat}}$ if and only if there exist $n\in\mathbb{N}\setminus \{0\}$, $\varphi_1,\ldots,\varphi_r\in \mathrm{End}(G)$, and $\gamma_1,\ldots,\gamma_r\in \Gamma$ such that $nP=\varphi_1(\gamma_1)+\cdots+\varphi_r(\gamma_r)$. 
 \end{enumerate}
\end{definition}

\begin{example}
If $\Gamma=\{0\}$, then $\Gamma_{\mathrm{div}}=G_{\mathrm{tor}}$, while, if $\mathrm{End}(G)=\mathbb{Z}$, then $\mathrm{End}(G)\cdot\Gamma=\Gamma$ and $\Gamma_{\mathrm{sat}}=\Gamma_{\mathrm{div}}$.
\end{example}
\begin{remark}\label{rmk:sat-div-sat}
    We have, in general, that \begin{equation}\label{Gamma-sat-div}\Gamma_{\mathrm{sat}}=\left(\Gamma_{\mathrm{sat}}\right)_{\mathrm{div}}=\left(\Gamma_{\mathrm{div}}\right)_{\mathrm{sat}} = \left(\Gamma_{\mathrm{sat}}\right)_{\mathrm{sat}}.
    \end{equation}
\end{remark}
\begin{proof}
    It is clear that $\Gamma_{\mathrm{sat}}\subset \left(\Gamma_{\mathrm{div}}\right)_{\mathrm{sat}} \subset \left(\Gamma_{\mathrm{sat}}\right)_{\mathrm{sat}}$ and $\Gamma_{\mathrm{sat}}\subset \left(\Gamma_{\mathrm{sat}}\right)_{\mathrm{div}} \subset \left(\Gamma_{\mathrm{sat}}\right)_{\mathrm{sat}}$.

    Moreover, if $P \in \left(\Gamma_{\mathrm{sat}}\right)_{\mathrm{sat}}$, then $nP \in \mathrm{End}(G) \cdot \Gamma_{\mathrm{sat}}$ for some integer $n \geq 1$, so
    \[ nP =\varphi_1(\gamma_1')+\cdots+\varphi_r(\gamma_r'),\]
    where $\varphi_i\in \mathrm{End}(G)$ and $\gamma_i'\in \Gamma_{\mathrm{sat}}$ ($i = 1, \hdots, r$). For every $1\leq i\leq r$, there is an integer $m_i\geq 1$ such that $m_i\gamma_i'\in \mathrm{End}(G) \cdot \Gamma$. Set $m=nm_1\cdots m_r$, then
    \[ mP = \varphi_1(m_1\cdots m_r \gamma_1')+\cdots+\varphi_r(m_1\cdots m_r \gamma_r') \in \mathrm{End}(G) \cdot (\mathrm{End}(G) \cdot \Gamma) = \mathrm{End}(G) \cdot \Gamma\]
    and so $P \in \Gamma_{\mathrm{sat}}$. This means that $\left(\Gamma_{\mathrm{sat}}\right)_{\mathrm{sat}} \subset \Gamma_{\mathrm{sat}}$ and we are done.
\end{proof}
\subsection{On homomorphisms of algebraic groups}
In this section we collect two classical lemmas about homomorphisms of algebraic groups that will be used in our proofs.
\begin{lemma}\label{fact-isogeny-degree}
Let $G, G'$ be two connected commutative algebraic groups (for example, semiabelian varieties) defined over a field $K$ of characteristic $0$ and let $\varphi: G \to G'$ be an isogeny of degree $m > 0$, defined over $K$. Then there exists an isogeny $\psi: G' \to G$, also defined over $K$, such that the morphisms $\psi\circ \varphi$ and $\varphi\circ \psi$ are the multiplication by $m$ on $G$ and $G'$ respectively.
\end{lemma}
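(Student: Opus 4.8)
The plan is to obtain $\psi$ by factoring multiplication by $m$ on $G$ through $\varphi$. The key elementary input is that, since $\mathrm{char}\,K = 0$, every finite group scheme over $K$ is étale, so $\ker\varphi$ is a finite étale group scheme whose group of $\overline{K}$-points is a finite abelian group of order $m = \deg\varphi$. By Lagrange's theorem every element of $\ker\varphi(\overline{K})$ is annihilated by $m$; since $G$ (being of finite type over a field of characteristic $0$) is smooth, hence reduced, this means that $\ker\varphi \subseteq G[m] = \ker([m]_G)$ as closed subgroup schemes, where $[m]_G$ is the multiplication-by-$m$ homomorphism on $G$ (a homomorphism because $G$ is commutative). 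The same remarks apply to $G'$, and I will use $\varphi\colon G \to G'$ as an epimorphism.

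Next I would use the standard fact that an isogeny of connected commutative algebraic groups in characteristic $0$ realises its target as a quotient: $\varphi$ is faithfully flat and identifies $G'$ with $G/\ker\varphi$, so every homomorphism out of $G$ that vanishes on $\ker\varphi$ factors uniquely through $\varphi$. Applying this to $[m]_G$ (which vanishes on $\ker\varphi$ by the previous step) produces a unique homomorphism $\psi\colon G' \to G$ with $\psi\circ\varphi = [m]_G$. To see that $\psi$ is defined over $K$, I would either perform the quotient construction over $K$ directly (legitimate since $\ker\varphi$ is a $K$-subgroup), or argue by descent: for $\sigma \in \Gal(\overline{K}/K)$ one has $\psi^{\sigma}\circ\varphi = (\psi\circ\varphi)^{\sigma} = [m]_G^{\sigma} = [m]_G$, whence $\psi^{\sigma} = \psi$ by uniqueness.

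For the companion identity, I would exploit that $\varphi$ is an epimorphism (faithfully flat, or just surjective on $\overline{K}$-points with $G'$ reduced). From $\psi\circ\varphi = [m]_G$ we get
\[ (\varphi\circ\psi)\circ\varphi \;=\; \varphi\circ[m]_G \;=\; [m]_{G'}\circ\varphi, \]
and cancelling $\varphi$ on the right yields $\varphi\circ\psi = [m]_{G'}$. It remains to check that $\psi$ is an isogeny. In characteristic $0$ the differential of $[m]_G$ at the origin is multiplication by $m$ on the Lie algebra, hence invertible, so $[m]_G$ is étale with finite kernel and has closed image equal to $G$ (its cokernel would be a connected group with trivial Lie algebra, hence trivial); thus $[m]_G$ is an isogeny. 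Consequently $\operatorname{im}\psi \supseteq \operatorname{im}(\psi\circ\varphi) = \operatorname{im}([m]_G) = G$, so $\psi$ is surjective, and $\ker\psi = \varphi(\ker([m]_G))$ is finite; hence $\psi$ is a $K$-isogeny with the required properties.

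I do not expect a genuine obstacle here: the content of the lemma is the classical "dual isogeny" construction, and the only points requiring care are bookkeeping ones — invoking the correct quotient formalism for algebraic groups, and checking that the constructed $\psi$ descends to $K$ and is itself an isogeny. The single arithmetic ingredient, namely that $\ker\varphi(\overline{K})$ has exponent dividing its order $m$, is immediate from Lagrange's theorem, and it is precisely here that the hypothesis $\deg\varphi = m$ enters.
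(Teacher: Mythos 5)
Your proposal is correct and follows essentially the same route as the paper: the paper's one-line proof cites Milne's factorization theorem for quotient maps (Theorem 5.13) together with the finiteness of $G[m]$, which is exactly the argument you spell out — $\ker\varphi\subseteq G[m]$ by Lagrange, factor $[m]_G$ through $\varphi$ to get $\psi$, and deduce the companion identity and the isogeny property. Your write-up simply makes explicit the descent to $K$ and the verification that $\psi$ is an isogeny, which the paper leaves implicit.
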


\begin{proof}
The lemma follows from \cite[Theorem 5.13]{Milne} and the fact that $G[m]$ is finite. 
\end{proof}

\begin{lemma} \label{lem:section-homomorphism} 
Let $f: B \to B'$ be a homomorphism of abelian varieties, defined over a field $L$ of characteristic $0$. Then there exist an integer $N \geq 1$ and a homomorphism of abelian varieties $\varphi: B' \to B$, also defined over $L$, such that
\[ f \circ \varphi \circ f = f \circ [N] = [N] \circ f,\]
where, by abuse of notation, $[N]$ denotes the multiplication-by-$N$ endomorphisms of $B$ and $B'$ respectively.
\end{lemma}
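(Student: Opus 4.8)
The plan is to construct $\varphi$ as a composite of the form (retraction onto $f(B)$) $\circ$ (quasi-inverse of an isogeny) $\circ$ (inclusion of $f(B)$), using Poincar\'e's complete reducibility theorem twice — once inside $B$ and once inside $B'$ — together with Lemma \ref{fact-isogeny-degree} to produce the quasi-inverses. Throughout I would use that $L$, being of characteristic $0$, is perfect, so that kernels of homomorphisms of abelian varieties are smooth and all the abelian subvarieties and complements appearing below can be taken over $L$.

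First I would set $E \colon= (\ker f)^0$, an abelian subvariety of $B$ over $L$, and $C \colon= f(B)$, an abelian subvariety of $B'$ over $L$, so that $f$ factors as $B \twoheadrightarrow C \hookrightarrow B'$. By Poincar\'e's complete reducibility theorem over $L$ there is an abelian subvariety $B_1 \subseteq B$, defined over $L$, with $E + B_1 = B$ and $E \cap B_1$ finite; a dimension count then shows that $f|_{B_1} \colon B_1 \to C$ is surjective with finite kernel, i.e. an isogeny, say of degree $m$. Lemma \ref{fact-isogeny-degree} supplies an isogeny $g \colon C \to B_1$ over $L$ with $g \circ (f|_{B_1})$ and $(f|_{B_1}) \circ g$ equal to multiplication by $m$ on $B_1$ and on $C$ respectively. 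Writing $\iota \colon B_1 \hookrightarrow B$ for the inclusion and $\varphi_0 \colon= \iota \circ g \colon C \to B$, and using $f \circ \iota = f|_{B_1}$, one obtains $f \circ \varphi_0 \circ f = [m] \circ f$.

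Next I would extend $\varphi_0$ from $C$ to all of $B'$. Applying Poincar\'e's complete reducibility theorem to $C \subseteq B'$ gives an abelian subvariety $D \subseteq B'$ over $L$ with $C + D = B'$ and $C \cap D$ finite, so the addition map $\mu \colon C \times D \to B'$ is an isogeny, say of degree $n$. By Lemma \ref{fact-isogeny-degree} there is an isogeny $\nu = (\nu_1, \nu_2) \colon B' \to C \times D$ over $L$ with $\mu \circ \nu = [n]$ and $\nu \circ \mu = [n]$; restricting the latter identity to $C \times \{0\}$ yields $\nu_1(c) = nc$ for every $c \in C$. Now set $N \colon= mn$ and $\varphi \colon= \varphi_0 \circ \nu_1 \colon B' \to B$. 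For $b \in B$ one has $f(b) \in C$, hence $\nu_1(f(b)) = n\, f(b)$, and therefore
\[ f\bigl(\varphi(f(b))\bigr) = f\bigl(\varphi_0(n\, f(b))\bigr) = n\, f\bigl(\varphi_0(f(b))\bigr) = nm\, f(b) = N\, f(b), \]
so $f \circ \varphi \circ f = [N] \circ f$; since $f$ is a homomorphism this also equals $f \circ [N]$, which is the asserted identity.

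I do not expect a serious obstacle: the statement is essentially bookkeeping around two classical facts. The one point that genuinely needs care is descent to $L$ — that $E$, $C$, $B_1$, $D$, the Poincar\'e complements, and the quasi-inverses $g$ and $\nu$ all exist over $L$ rather than merely over $\overline{L}$ — which holds because $L$ is perfect and the category of abelian varieties up to isogeny over $L$ is semisimple. Alternatively, one could phrase the whole argument directly in that semisimple isogeny category, where $B \sim E \times C$ and $B' \sim C \times D$, realise a rational quasi-inverse of $f$ there, and clear denominators to produce the integer $N$ and the $L$-morphism $\varphi$.
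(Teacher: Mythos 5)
Your proposal is correct and follows essentially the same route as the paper: take the identity component of $\ker f$, use Poincar\'e reducibility to find a complement $B_1$ in $B$ on which $f$ restricts to an isogeny onto $f(B)$, invert that isogeny up to multiplication by $m$ via Lemma \ref{fact-isogeny-degree}, then use Poincar\'e reducibility again in $B'$ to build a retraction $B' \to f(B)$ acting as $[n]$ on $f(B)$, and compose with $N = mn$. The paper phrases the retraction directly as a homomorphism $\varphi_2': B' \to f(B)$ with $\varphi_2'|_{f(B)} = [N_2]$ rather than via your component map $\nu_1$, but this is only a cosmetic difference.
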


\begin{proof}
Let $B_1 \subset B$ denote the identity component of the kernel of $f$. By Poincar\'e's complete reducibility theorem \cite[Theorem 1 on p.~173]{Mumford}, there exists an abelian subvariety $B_2$ of $B$, defined over $L$, such that the restriction of the addition homomorphism $B_1 \times B_2 \to B$ is an isogeny. In particular, $f$ induces an isogeny from $B_2$ onto $f(B)$. By Lemma \ref{fact-isogeny-degree}, there exist a homomorphism $\varphi'_1: f(B) \to B_2$, defined over $L$, and an integer $N_1 \geq 1$ such that $(f \circ \varphi'_1)(P) = N_1P$ for all $P \in f(B)(L)$.

Applying Poincar\'e's complete reducibility theorem once more, we find that there exists an abelian subvariety $B_1'$ of $B'$, defined over $L$, such that the restriction of the addition homomorphism $f(B) \times B_1' \to B'$ is an isogeny. It follows from Lemma \ref{fact-isogeny-degree} that there exist a homomorphism $\varphi'_2: B' \to f(B)$, defined over $L$, and an integer $N_2 \geq 1$ such that $\varphi'_2(P') = N_2P'$ for all $P' \in f(B)(L)$.

Setting $\varphi = \varphi'_1 \circ \varphi_2'$ and $N = N_1N_2$, we find that
\begin{align*}
    (f \circ \varphi \circ f)(P) = (f \circ \varphi'_1)(\varphi'_2(f(P))) 
    = (f \circ \varphi'_1)(N_2f(P))
    = Nf(P)
\end{align*}
for all $P \in B(L)$ as desired.
\end{proof}

\section{A first reduction}\label{red-step-1}

The goal of this section is to prove the following first reduction step:

\begin{proposition}\label{reduction-step}
     The validity of Theorem \ref{cor-main} is established by proving it in the special case where $G$ and $\Gamma$ satisfy the following conditions:
    \begin{enumerate}[(i)]
    \item $G$ is the product of a power of the multiplicative group $\mathbb{G}_{m,K}$ and an abelian variety defined over $K$ having all geometric endomorphisms  defined over $K$;
    \item $\Gamma$ is a finitely generated subgroup of $G(K)$.
    \end{enumerate}
\end{proposition}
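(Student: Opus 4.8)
The plan is to run a short chain of reductions, each of which preserves the truth of the conclusion of Theorem~\ref{cor-main}, until the hypotheses (i) and (ii) are met. Throughout I would use three elementary facts: (1) a subgroup of a free abelian group is free abelian; (2) for any subgroup $\Gamma\subset G(\overline K)$ the group $\Gamma_{\mathrm{sat}}$ is divisible and satisfies $(\Gamma_{\mathrm{sat}})_{\mathrm{div}}=\Gamma_{\mathrm{sat}}$ (the latter from Remark~\ref{rmk:sat-div-sat}), so in particular $G(L)/\Gamma_{\mathrm{sat}}$ is torsion-free for every $L\supseteq K(\Gamma_{\mathrm{sat}})$; (3) endomorphisms commute with multiplication by integers, so $\mathrm{End}(G)\cdot(n\Gamma)=n(\mathrm{End}(G)\cdot\Gamma)$ has the same division closure as $\mathrm{End}(G)\cdot\Gamma$, whence $(n\Gamma)_{\mathrm{sat}}=\Gamma_{\mathrm{sat}}$.

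First I would dispose of two cheap reductions. Enlarging $K$ to a finite extension $K'$ is harmless: $\Gamma_{\mathrm{sat}}$ depends only on $\Gamma$ and $\mathrm{End}(G)$, so it is unchanged, $\Gamma_{\mathrm{sat}}\subset G(K(\Gamma_{\mathrm{sat}}))$, and for finite $L/K(\Gamma_{\mathrm{sat}})$ the extension $LK'/K'(\Gamma_{\mathrm{sat}})$ is finite while $G(L)/\Gamma_{\mathrm{sat}}$ is a subgroup of $G(LK')/\Gamma_{\mathrm{sat}}$; now apply (1). Secondly, if $\gamma_1,\dots,\gamma_r\in\Gamma$ is a maximal $\mathbb{Z}$-linearly independent subset and $\Gamma_0=\langle\gamma_1,\dots,\gamma_r\rangle$, then for every $\gamma\in\Gamma$ some nonzero multiple lies in $\Gamma_0$, so $\Gamma\subset(\Gamma_0)_{\mathrm{div}}\subset(\Gamma_0)_{\mathrm{sat}}$ and hence $\Gamma_{\mathrm{sat}}=(\Gamma_0)_{\mathrm{sat}}$ by Remark~\ref{rmk:sat-div-sat}; since the statement of Theorem~\ref{cor-main} only sees $\Gamma_{\mathrm{sat}}$, I may replace $\Gamma$ by the finitely generated group $\Gamma_0$. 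So from now on $\Gamma$ is finitely generated, and --- enlarging $K$ once more --- I may assume in addition that $\Gamma\subset G(K)$, that $G$ is isogenous over $K$ to some $G'=\mathbb{G}_{m,K}^{t}\times A$ with $A/K$ an abelian variety all of whose geometric endomorphisms are defined over $K$, and, fixing an isogeny $\phi\colon G\to G'$ over $K$ of degree $m$, that there is an isogeny $\psi\colon G'\to G$ over $K$ with $\psi\circ\phi=[m]_G$ and $\phi\circ\psi=[m]_{G'}$ (Lemma~\ref{fact-isogeny-degree}). All of these are finite extensions, taken simultaneously.

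It remains to transport the statement across $\phi$. Put $\Gamma'=\phi(\Gamma)\subset G'(K)$, again finitely generated, so that $(G',\Gamma')$ satisfies (i) and (ii). For $\varphi\in\mathrm{End}(G)$ the composite $\phi\circ\varphi\circ\psi$ lies in $\mathrm{End}(G')$ and $(\phi\circ\varphi\circ\psi)\circ\phi=m\,(\phi\circ\varphi)$, and symmetrically with the roles of $G,G'$ and $\phi,\psi$ exchanged; plugging this into the description of the saturated closure in Definition~\ref{def-Gamma-div-sat} yields $\phi(\Gamma_{\mathrm{sat}})\subset\Gamma'_{\mathrm{sat}}$ and $\psi(\Gamma'_{\mathrm{sat}})\subset(\psi(\Gamma'))_{\mathrm{sat}}=(m\Gamma)_{\mathrm{sat}}=\Gamma_{\mathrm{sat}}$. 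Using divisibility (fact (2)), every $P\in\Gamma_{\mathrm{sat}}$ can be written $P=mP'$ with $P'\in\Gamma_{\mathrm{sat}}$, so $P=\psi(\phi(P'))$ has coordinates in $K(\Gamma'_{\mathrm{sat}})$; the reverse inclusion being symmetric, $K(\Gamma_{\mathrm{sat}})=K(\Gamma'_{\mathrm{sat}})$. Now let $L/K(\Gamma_{\mathrm{sat}})$ be finite; then $L/K(\Gamma'_{\mathrm{sat}})$ is finite, so by the (assumed) special case of Theorem~\ref{cor-main} applied to $(G',\Gamma')$ the group $G'(L)/\Gamma'_{\mathrm{sat}}$ is free abelian. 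The isogeny $\phi$ induces a homomorphism $G(L)/\Gamma_{\mathrm{sat}}\to G'(L)/\Gamma'_{\mathrm{sat}}$; if $P\in G(L)$ maps into $\Gamma'_{\mathrm{sat}}$, then $mP=\psi(\phi(P))\in\Gamma_{\mathrm{sat}}$, so $P+\Gamma_{\mathrm{sat}}$ is annihilated by $m$ in the torsion-free group $G(L)/\Gamma_{\mathrm{sat}}$, hence is $0$. Thus this homomorphism is injective, $G(L)/\Gamma_{\mathrm{sat}}$ embeds into a free abelian group, and therefore is free abelian. This establishes Theorem~\ref{cor-main} in general.

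There is no deep obstacle here; the points that need care are the bookkeeping in the last step --- one genuinely needs the co-isogeny $\psi$ and the relations $\psi\circ\phi=[m]_G$, $\phi\circ\psi=[m]_{G'}$, the divisibility of $\Gamma_{\mathrm{sat}}$ and $\Gamma'_{\mathrm{sat}}$ to identify the two fields $K(\Gamma_{\mathrm{sat}})$ and $K(\Gamma'_{\mathrm{sat}})$ (so that $L$ is a finite extension of both), and the torsion-freeness of $G(L)/\Gamma_{\mathrm{sat}}$ to descend the injectivity of $\phi$ to the quotient --- together with checking that the several finite extensions of $K$ (rationality of all geometric endomorphisms of $A$, rationality of $\phi$ and $\psi$, rationality of a finite generating set of $\Gamma$) can indeed be performed at once, so that the final triple genuinely satisfies conditions (i) and (ii).
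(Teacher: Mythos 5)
Your proposal is correct and follows essentially the same route as the paper: pass to a finite extension of $K$ over which $G$ is isogenous to a product $\mathbb{G}_{m}^{t}\times A$ with all geometric endomorphisms of $A$ rational, replace $\Gamma$ by a finitely generated group with the same saturated closure, transport $\Gamma_{\mathrm{sat}}$ across the isogeny using the dual isogeny from Lemma~\ref{fact-isogeny-degree}, identify the fields generated by the two saturated closures, and embed $G(L)/\Gamma_{\mathrm{sat}}$ into the free group $G'(L)/\Gamma'_{\mathrm{sat}}$. The only (immaterial) differences are bookkeeping: the paper checks injectivity via $\varphi^{-1}(\Gamma'_{\mathrm{sat}})=\Gamma_{\mathrm{sat}}$ using $\mathrm{Ker}\,\varphi\subset G_{\mathrm{tor}}$, where you use $\psi\circ\phi=[m]$ together with torsion-freeness of $G(L)/\Gamma_{\mathrm{sat}}$, which amounts to the same thing.
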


To prove Proposition \ref{reduction-step} we need the following lemma.

\begin{lemma}\label{lem-deduction}
Let $K$ be a number field with a fixed algebraic closure $\overline{K}$ and let $G$ be a connected commutative algebraic group over $K$. Suppose $\Gamma$ is a subgroup of $G(\overline{K})$ of finite rank. Then there exists a finite extension $E/K$ with $E\subset K(\Gamma_{\mathrm{sat}})$ and a finitely generated subgroup $\Gamma'$ of $G(E)$ such that $\Gamma_{\mathrm{sat}} = \Gamma'_{\mathrm{sat}}$. 
\end{lemma}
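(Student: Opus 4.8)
The plan is to use the fact that $\Gamma$ has finite rank, say rank $\rho$, to replace $\Gamma$ by a more tractable subgroup. First I would pick elements $\gamma_1, \dots, \gamma_\rho \in \Gamma$ that form a maximal $\mathbb{Z}$-linearly independent subset, and set $\Gamma_0 = \langle \gamma_1, \dots, \gamma_\rho\rangle$, a finitely generated free subgroup of $\Gamma$ of the same rank as $\Gamma$. Then every element of $\Gamma$ has a nonzero integer multiple lying in $\Gamma_0$ (using that $\Gamma$ has rank $\rho$ and the $\gamma_i$ are a basis of $\Gamma \otimes \mathbb{Q}$), so $\Gamma \subset (\Gamma_0)_{\mathrm{div}}$ and hence $\mathrm{End}(G)\cdot \Gamma \subset (\mathrm{End}(G)\cdot \Gamma_0)_{\mathrm{div}}$, which gives $\Gamma_{\mathrm{sat}} \subset (\Gamma_0)_{\mathrm{sat}}$; the reverse inclusion is immediate since $\Gamma_0 \subset \Gamma$. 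Thus $\Gamma_{\mathrm{sat}} = (\Gamma_0)_{\mathrm{sat}}$ and we have reduced to the case of a finitely generated $\Gamma_0$.

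Next I would arrange that $\Gamma_0$ is defined over a finite extension of $K$. Since $\Gamma_0$ is finitely generated and each generator $\gamma_i$ lies in $G(\overline{K})$, it lies in $G(E_0)$ for some finite extension $E_0/K$ (take $E_0 = K(\gamma_1, \dots, \gamma_\rho)$). The issue is that $E_0$ need not be contained in $K(\Gamma_{\mathrm{sat}})$. To fix this, observe that for each $i$ there is an $n_i \geq 1$ with $n_i \gamma_i \in \mathrm{End}(G)\cdot \Gamma$ trivially (namely $n_i = 1$); more to the point, $\gamma_i \in \Gamma_{\mathrm{sat}}$, so $\gamma_i \in G(K(\Gamma_{\mathrm{sat}}))$, which forces $E_0 = K(\gamma_1, \dots, \gamma_\rho) \subset K(\Gamma_{\mathrm{sat}})$ automatically. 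Hence we may take $E = E_0$ and $\Gamma' = \Gamma_0 \subset G(E)$, and $E/K$ is finite since $\Gamma_0$ is finitely generated by algebraic points. This already yields all the required conclusions: $E/K$ is finite, $E \subset K(\Gamma_{\mathrm{sat}})$, $\Gamma'$ is finitely generated, $\Gamma' \subset G(E)$, and $\Gamma'_{\mathrm{sat}} = \Gamma_{\mathrm{sat}}$.

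I expect the only mild subtlety to be in verifying that $\Gamma \subset (\Gamma_0)_{\mathrm{div}}$ — i.e. that maximality of the linearly independent set $\gamma_1, \dots, \gamma_\rho$ genuinely forces every $\gamma \in \Gamma$ to satisfy $n\gamma \in \Gamma_0$ for some $n \geq 1$ — but this is the standard argument that a maximal $\mathbb{Z}$-independent subset of a finite-rank abelian group spans the group up to torsion and up to finite index on the torsion-free part, combined with the fact that division closures are insensitive to passing to finite-index subgroups. Everything else is bookkeeping with the identities in Remark \ref{rmk:sat-div-sat} (in particular $\Gamma_{\mathrm{sat}} = (\Gamma_{\mathrm{div}})_{\mathrm{sat}}$), so there is no serious obstacle; the lemma is essentially a normalisation statement.
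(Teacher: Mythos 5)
Your proposal is correct and follows essentially the same route as the paper: choose a maximal $\mathbb{Z}$-linearly independent subset $\{\gamma_1,\dots,\gamma_r\}$ of $\Gamma$, take $\Gamma'=\langle\gamma_1,\dots,\gamma_r\rangle$ and $E=K(\gamma_1,\dots,\gamma_r)$, note $\Gamma'_{\mathrm{div}}=\Gamma_{\mathrm{div}}$, and conclude via the identity $\Gamma_{\mathrm{sat}}=(\Gamma_{\mathrm{div}})_{\mathrm{sat}}$ from Remark \ref{rmk:sat-div-sat}. The extra verifications you flag (that $\Gamma\subset\Gamma'_{\mathrm{div}}$ and that $E\subset K(\Gamma_{\mathrm{sat}})$) are exactly the ones the paper treats as immediate, and your handling of them is sound.
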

\begin{proof} Let $r$ be the rank of $\Gamma$ and take a maximal $\mathbb{Z}$-linearly independent subset $\{\gamma_1, \dots, \gamma_r\}$ of $\Gamma$.  
Set $\Gamma' = \mathbb{Z}\gamma_1 + \dots + \mathbb{Z}\gamma_r$ and $E=K(\gamma_1, \ldots, \gamma_r)$. 

Clearly, $E$ is a finite extension of $K$  contained in $K(\Gamma_{\mathrm{sat}})$ and $\Gamma'$ is a finitely generated subgroup of $G(E)$.
Moreover, by construction, $\Gamma'_{\mathrm{div}} = \Gamma_{\mathrm{div}}$. Hence, by \eqref{Gamma-sat-div}, 
\[\Gamma'_{\mathrm{sat}}=\left(\Gamma'_{\mathrm{div}}\right)_{\mathrm{sat}}=\left(\Gamma_{\mathrm{div}}\right)_{\mathrm{sat}}=\Gamma_{\mathrm{sat}}.\qedhere\]
\end{proof}

We are now able to prove Proposition \ref{reduction-step}.
\begin{proof}[Proof of Proposition  \ref{reduction-step}]
Let $G$, $K$, and $\Gamma$ be as in Theorem \ref{cor-main} and let $L$ be a finite extension of $K(\Gamma_{\mathrm{sat}})$. We want to show that $G(L)/\Gamma_{\mathrm{sat}}$ is free abelian. 

Let $K'/K$ be a finite extension such that there is an isogeny, defined over $K'$, from $G$ to a product $G'= \mathbb{G}_{m,K'}^t \times A$, where $t \in \mathbb{Z}_{\geq 0}$, $A$ is an abelian variety over $K'$, and all geometric endomorphisms of $A$ are defined over $K'$. We fix an isogeny $\varphi: G \to G'$, defined over $K'$. We denote by $\mathrm{Ker}\varphi$ its kernel and set $d = |\mathrm{Ker}\varphi|$. By Lemma \ref{fact-isogeny-degree}, there is an isogeny $\psi: G'\rightarrow G$, defined over $K'$, such that the morphisms $\psi\circ \varphi$ and $\varphi\circ \psi$ are the multiplication by $d$ on $G$ and $G'$ respectively.

By Lemma \ref{lem-deduction}, there exist a finite extension $E/K'$ with $E\subset K'(\Gamma_{\mathrm{sat}})$ and a finitely generated subgroup $\Gamma^\circ$ of $G(E)$ such that $\Gamma_{\mathrm{sat}}=\Gamma^\circ_{\mathrm{sat}}$. We set $\Gamma' = \varphi(\Gamma^\circ)$. Then $\Gamma'$ is a finitely generated subgroup of $G'(E)$. We now want to show that $\Gamma'_{\mathrm{sat}} =\varphi(\Gamma_{\mathrm{sat}})$.

This is equivalent to proving that $\varphi(\Gamma^\circ_{\mathrm{sat}})=\varphi(\Gamma^\circ)_{\mathrm{sat}}$. So let $P'\in \varphi(\Gamma^\circ_{\mathrm{sat}})$. Then $P'=\varphi(P)$ for some $P\in G(\overline{K})$ such that \[mP=\varphi_1(\gamma_1)+\cdots+\varphi_r(\gamma_r)\] for some integer $m\geq 1$, $\varphi_i\in \mathrm{End}(G)$, and $\gamma_i\in \Gamma^\circ$ ($i = 1, \hdots, r$).
Set $\varphi_i'=\varphi\circ \varphi_i\circ \psi$. Then $\varphi_i'\in\mathrm{End}(G')$ and 
\[dmP'=\sum_{i=1}^r(\varphi\circ \varphi_i)(d\gamma_i)=\sum_{i=1}^r(\varphi\circ \varphi_i)\left((\psi\circ\varphi)(\gamma_i)\right)=\varphi_1'(\varphi(\gamma_1))+\cdots+\varphi_r'(\varphi(\gamma_r))\]
and hence $P'\in \varphi(\Gamma^\circ)_{\mathrm{sat}}$. On the other hand, let $Q'\in \varphi(\Gamma^\circ)_{\mathrm{sat}}$ and let $Q\in G(\overline{K})$ such that $Q'=\varphi(Q)$ (recall that $\varphi$ is surjective). We want to show that $Q\in \Gamma^\circ_{\mathrm{sat}}$. As $Q'\in \varphi(\Gamma^\circ)_{\mathrm{sat}}$, we have that \[nQ'=\varphi_1'(\varphi(\gamma_1))+\cdots+\varphi_s'(\varphi(\gamma_s))\] for some integer $n\geq 1$, $\varphi_i'\in\mathrm{End}(G')$, and $\gamma_i\in \Gamma^\circ$ ($i = 1, \hdots, s$). We set $\varphi_i=\psi\circ\varphi_i'\circ \varphi\in \mathrm{End}(G)$ for $i = 1, \hdots, s$. Recalling that $\varphi\circ \psi$ is the multiplication by $d$ on $G'$, we deduce that \[dnQ'=\varphi(\varphi_1(\gamma_1)+\cdots+\varphi_s(\gamma_s)).\]
But then
$dnQ=\varphi_1(\gamma_1)+\cdots+\varphi_s(\gamma_s)+T$ with $T\in (\mathrm{Ker} \varphi)(\overline{K})$. In particular $d^2nQ\in \mathrm{End}(G)\cdot \Gamma^\circ$, which proves the sought-for equality.

Since $(\mathrm{Ker} \varphi)(\overline{K}) \subset G_{\mathrm{tor}} \subset \Gamma_{\mathrm{sat}}$, we also have that
\[ \varphi^{-1}(\Gamma'_{\mathrm{sat}}) = \Gamma_{\mathrm{sat}}.\]
Therefore, the isogeny $\varphi$ induces an injective group homomorphism
\[ G(L)/\Gamma_{\mathrm{sat}} \to G(LE)/\Gamma_{\mathrm{sat}} \to G'(LE)/\Gamma'_{\mathrm{sat}}.\]

We now show that $K'(\Gamma_{\mathrm{sat}}) = E(\Gamma'_{\mathrm{sat}})$. Indeed, $E(\Gamma'_{\mathrm{sat}})\subset K'(\Gamma_{\mathrm{sat}})$ by construction and by the fact that $\phi(\Gamma_{\mathrm{sat}}) = \Gamma'_{\mathrm{sat}}$, while the other inclusion follows from the fact that $\psi(\Gamma'_{\mathrm{sat}})=(\psi\circ \varphi)(\Gamma_{\mathrm{sat}})=d \Gamma_{\mathrm{sat}}=\Gamma_{\mathrm{sat}}$ since, by \eqref{Gamma-sat-div}, $\Gamma_{\mathrm{sat}}=\left(\Gamma_{\mathrm{sat}}\right)_{\mathrm{div}}$.
Since subgroups of free abelian groups are free abelian (see for instance \cite[Ch. I, Theorem 7.3]{Lang02}) and since $LE$ is a finite extension of $K'(\Gamma_{\mathrm{sat}}) = E(\Gamma'_{\mathrm{sat}})$, we see that it suffices to prove Theorem \ref{cor-main} for $G'$, $E$, and $\Gamma'$ instead of $G$, $K$, and $\Gamma$.
\end{proof}

\section{Pottmeyer's criterion and a further reduction}\label{reduction-second-step}
The goal of this section is to reduce Theorem \ref{cor-main} to Theorem \ref{lem:arbitraryrank-var-ab}:
\begin{proposition}\label{theorem3-implies-theorem2}
    Theorem \ref{lem:arbitraryrank-var-ab} implies Theorem \ref{cor-main}.
\end{proposition}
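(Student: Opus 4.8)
My plan is to start from Proposition \ref{reduction-step}, which lets me assume $G = \mathbb{G}_{m,K}^{t} \times A$ with all geometric endomorphisms of $A$ defined over $K$ and $\Gamma \subseteq G(K)$ finitely generated; the goal becomes to show that $G(L)/\Gamma_{\mathrm{sat}}$ is free abelian for every finite extension $L/K(\Gamma_{\mathrm{sat}})$. This group is countable, and it is torsion-free because $nP \in \Gamma_{\mathrm{sat}}$ forces $P \in (\Gamma_{\mathrm{sat}})_{\mathrm{div}} = \Gamma_{\mathrm{sat}}$ by Remark \ref{rmk:sat-div-sat}. I would then invoke Pottmeyer's criterion, Lemma \ref{Lukas-with-F} (a form of Pontryagin's criterion): a countable torsion-free abelian group is free as soon as each of its finite-rank subgroups is finitely generated. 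So it suffices to produce, for every finitely generated subgroup $C \subseteq G(L)$, an integer $d = d(C) \geq 1$ such that any $Q \in G(L)$ with $mQ \in C + \Gamma_{\mathrm{sat}}$ for some $m \geq 1$ already satisfies $dQ \in C + \Gamma_{\mathrm{sat}}$: given this, the isolator of the image of $C$ in $G(L)/\Gamma_{\mathrm{sat}}$ maps injectively, via multiplication by $d$, into the finitely generated free group generated by that image, hence is finitely generated, and every finite-rank subgroup of $G(L)/\Gamma_{\mathrm{sat}}$ sits inside such an isolator.

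To prove this division bound I would first replace $L$ by a more convenient field. Since subgroups of free abelian groups are free and $L$, being finite over $K(\Gamma_{\mathrm{sat}})$, is contained in $K''(\Gamma_{\mathrm{sat}})$ for a suitable number field $K'' \supseteq K$ (take $K'' = K(\theta)$ for a primitive element $\theta$ of $L/K(\Gamma_{\mathrm{sat}})$), I may assume $L = K(\Gamma_{\mathrm{sat}})$ after relabelling. Now fix a finitely generated $C \subseteq G(K(\Gamma_{\mathrm{sat}}))$; its generators already lie in $G(K''')$ for some number field $K'''$ with $K \subseteq K''' \subseteq K(\Gamma_{\mathrm{sat}})$, so that crucially $K'''(\Gamma_{\mathrm{sat}}) = K(\Gamma_{\mathrm{sat}})$. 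I would resist the temptation to replace $\Gamma$ by $\Gamma + C$: since $\mathrm{End}(G) = M_t(\mathbb{Z}) \times \mathrm{End}(A)$ — there being no nonzero homomorphisms between a torus and an abelian variety — and $\mathrm{End}(A)$ may be large, $\mathrm{End}(G)\cdot C$, and hence $(\Gamma+C)_{\mathrm{sat}}$ and the field $K'''((\Gamma+C)_{\mathrm{sat}})$, could be strictly larger, breaking the argument. Instead I apply Theorem \ref{lem:arbitraryrank-var-ab} to $(G, K''', \Gamma)$ directly. Here $\mathrm{End}(G)\cdot\Gamma$ is a finitely generated subgroup of $G(K) \subseteq G(K''')$, so any $Q$ as above lies in $G(K''')_{\mathrm{div}}$ and also in $G(K(\Gamma_{\mathrm{sat}})) = G(K'''(\Gamma_{\mathrm{sat}}))$; Theorem \ref{lem:arbitraryrank-var-ab} then provides $d_0 \geq 1$, independent of $Q$, with $d_0 Q = y + \gamma$ for some $y \in G(K''')$ and $\gamma \in \Gamma_{\mathrm{sat}}$.

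The remaining task is to absorb $y$ into $C + \Gamma_{\mathrm{sat}}$ at the cost of one more bounded multiple. Multiplying $d_0 Q = y + \gamma$ by $m$ and using $mQ \in C + \Gamma_{\mathrm{sat}}$, $d_0 C \subseteq C$, and $d_0 \Gamma_{\mathrm{sat}} = \Gamma_{\mathrm{sat}}$, one finds $my \in (C + \Gamma_{\mathrm{sat}}) \cap G(K''') = C + \bigl(\Gamma_{\mathrm{sat}} \cap G(K''')\bigr)$. The group $\Gamma_{\mathrm{sat}} \cap G(K''')$ is the division hull inside $G(K''')$ of the finitely generated group $\mathrm{End}(G)\cdot\Gamma$, and the division hull of any finitely generated subgroup of $G(K''') = ((K''')^\times)^t \times A(K''')$ is itself finitely generated — for the abelian factor by the Mordell–Weil theorem, for the toric factor by the classical Kummer/Northcott fact that a finitely generated subgroup of the multiplicative group of a number field has finitely generated division hull; this is the type of input packaged in Proposition \ref{Pot-prop3.3}. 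Hence $D := C + (\Gamma_{\mathrm{sat}} \cap G(K'''))$ is finitely generated, its division hull in $G(K''')$ is too, and there is $d_1 \geq 1$ with $d_1 y \in D$; then $d_1 d_0 Q = d_1 y + d_1 \gamma \in D + \Gamma_{\mathrm{sat}} = C + \Gamma_{\mathrm{sat}}$, so $d(C) := d_1 d_0$ works and Lemma \ref{Lukas-with-F} finishes the proof. The step I expect to demand the most care is exactly this last separation: being unable to fold $C$ into $\Gamma$, one must run Theorem \ref{lem:arbitraryrank-var-ab} on $\Gamma$ alone and then descend to the finer statement $dQ \in C + \Gamma_{\mathrm{sat}}$ by hand, which is what forces both the careful choice of the auxiliary number field $K'''$ (so that $K'''(\Gamma_{\mathrm{sat}})$ does not grow) and the finiteness results for division hulls underlying Proposition \ref{Pot-prop3.3}.
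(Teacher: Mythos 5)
Your proposal is correct, and its skeleton is the same as the paper's: reduce via Proposition \ref{reduction-step}, pass to an auxiliary number field between $K$ and $K(\Gamma_{\mathrm{sat}})$ containing the finitely many relevant points, and apply Theorem \ref{lem:arbitraryrank-var-ab} over that field to get a uniform division bound, finishing with a Pontryagin-type freeness criterion. The packaging differs in two ways worth flagging. First, what you actually invoke is not Lemma \ref{Lukas-with-F} as stated (that lemma is the field-theoretic criterion with hypotheses (i) and (ii) on intermediate fields $F$), but the underlying classical form of Pontryagin's criterion --- a countable torsion-free abelian group is free if every finite-rank subgroup is finitely generated --- which is the result from Eklof--Mekler on which Lemma \ref{Lukas-with-F} itself rests; your two citations of ``Lemma \ref{Lukas-with-F}'' should really point there. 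Second, because you work with the isolator of a finitely generated subgroup $C$ rather than with all of $G(F)$, you need the finer bound $dQ \in C + \Gamma_{\mathrm{sat}}$ instead of the paper's $dP \in G(F) + \Gamma_{\mathrm{sat}}$; this forces your extra step producing $d_1$ from the finite generation of division hulls in $G(K''')$ (Mordell--Weil plus Dirichlet's $S$-unit theorem), which in effect re-proves by hand the input the paper imports as Proposition \ref{Pot-prop3.3} (freeness of $G(F)/(\Gamma_{\mathrm{sat}} \cap G(F))$). Your route is thus slightly more self-contained but longer; the paper's route delegates more to the quoted lemmas. All the individual steps you sketch --- the verification that $Q \in G(K''')_{\mathrm{div}}$, the identity $(C+\Gamma_{\mathrm{sat}}) \cap G(K''') = C + (\Gamma_{\mathrm{sat}} \cap G(K'''))$, and the finiteness of the exponent of $D'/D$ --- check out.
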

To prove this result, a key ingredient is the following lemma from \cite{Pot}, which provides a criterion for the necessary condition, identified by Pottmeyer, for the generalisation of R\'emond's conjecture to products of powers of $\mathbb{G}_m$ and abelian varieties to hold. As it is a central tool for completing our proof, we recall its proof here for clarity. A precursor and special case of this lemma already appears as Proposition 2.3 in \cite{GHP15}.

\begin{lemma}[{\cite[Lemma 3.1]{Pot}}]\label{Lukas-with-F} Let $G=\mathbb{G}_{m,E}^t\times A$ where $t\in\mathbb{Z}_{\geq 0}$, $E$ is a number field with a fixed algebraic closure $\overline{E}$, and $A$ is an abelian variety defined over $E$.
Let $\Gamma\subset G(\overline{E})$  be an arbitrary subgroup.
Suppose that there exists a field $E \subset E'\subset E(\Gamma_{\mathrm{sat}})$ such that, for every intermediate field  $E'\subset F \subset E(\Gamma_{\mathrm{sat}})$ with $[F:E']<\infty$, one has that
\begin{enumerate}[(i)]
\item\label{cond-i} $G(F)/(\Gamma_{\mathrm{sat}}\cap G(F))$ is free abelian and 
\item\label{cond-ii} the torsion subgroup of $G(E(\Gamma_{\mathrm{sat}}))/(G(F)+\Gamma_{\mathrm{sat}})$ has finite exponent.
\end{enumerate}
Then $G(E(\Gamma_{\mathrm{sat}}))/\Gamma_{\mathrm{sat}}$ is free abelian.
\end{lemma}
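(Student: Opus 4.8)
The plan is to exhibit $G(E(\Gamma_{\mathrm{sat}}))/\Gamma_{\mathrm{sat}}$ as an increasing union of free abelian subgroups in such a way that a classical criterion for freeness of a union (in the style of Pontryagin's criterion, or the fact that a countable union of free groups with a suitable chain condition is free) applies. Write $\Lambda = E(\Gamma_{\mathrm{sat}})$ for brevity. Since $\Lambda/E'$ is a (possibly infinite) algebraic extension, it is the directed union of its finite subextensions $F$ with $E' \subset F \subset \Lambda$, and correspondingly $G(\Lambda) = \bigcup_F G(F)$, a directed union over this family. The first step is therefore to pass to the quotient: $G(\Lambda)/\Gamma_{\mathrm{sat}} = \bigcup_F \bigl( (G(F)+\Gamma_{\mathrm{sat}})/\Gamma_{\mathrm{sat}} \bigr)$, again a directed union of subgroups.

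The second step is to analyse each term $(G(F)+\Gamma_{\mathrm{sat}})/\Gamma_{\mathrm{sat}}$. By the second isomorphism theorem this is isomorphic to $G(F)/(G(F)\cap\Gamma_{\mathrm{sat}}) = G(F)/(\Gamma_{\mathrm{sat}}\cap G(F))$, which is free abelian by hypothesis \eqref{cond-i}. So $G(\Lambda)/\Gamma_{\mathrm{sat}}$ is a directed union of free abelian groups. This alone is not enough to conclude freeness (a directed union of free groups need not be free, e.g. $\mathbb{Q}$), so the third step is where hypothesis \eqref{cond-ii} enters: I would like to say that the quotient of consecutive terms in a suitable cofinal chain is again free, and then invoke the criterion that a group which is the union of a chain $G_0 \subset G_1 \subset \cdots$ with each $G_{n+1}/G_n$ free is itself free (this is the ``Pontryagin-type'' statement referenced in the paper, and for the countable case it reduces to repeatedly splitting short exact sequences $0 \to G_n \to G_{n+1} \to G_{n+1}/G_n \to 0$, which split since $G_{n+1}/G_n$ is free, giving $G(\Lambda)/\Gamma_{\mathrm{sat}} \cong \bigoplus_n (G_{n+1}/G_n)$). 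Concretely: the quotient $\bigl((G(F_2)+\Gamma_{\mathrm{sat}})/\Gamma_{\mathrm{sat}}\bigr)\big/\bigl((G(F_1)+\Gamma_{\mathrm{sat}})/\Gamma_{\mathrm{sat}}\bigr)$ for $F_1 \subset F_2$ is isomorphic to $(G(F_2)+\Gamma_{\mathrm{sat}})/(G(F_1)+\Gamma_{\mathrm{sat}})$, a subgroup of $G(\Lambda)/(G(F_1)+\Gamma_{\mathrm{sat}})$; by \eqref{cond-ii} the torsion of the latter has finite exponent, say $m_{F_1}$, so the torsion of the subgroup is killed by $m_{F_1}$ as well. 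On the other hand this subgroup embeds into $G(F_2)/(G(F_2)\cap(G(F_1)+\Gamma_{\mathrm{sat}}))$, a quotient of the finitely generated (by Mordell--Weil for the abelian part, and finite generation of $\mathcal{O}_{F_2}^\times$ up to roots of unity for the toric part — here one should be a little careful, but $G(F_2)/(\Gamma_{\mathrm{sat}}\cap G(F_2))$ is free of finite rank) free abelian group $G(F_2)/(\Gamma_{\mathrm{sat}}\cap G(F_2))$; a finitely generated abelian group whose torsion has finite exponent and which is a quotient of a free group — more simply, a finitely generated abelian group with bounded torsion is finite-plus-free, but being a subgroup of the torsion-free group $G(\Lambda)/\Gamma_{\mathrm{sat}}$ would force... — here I need to be slightly more careful and instead argue that a suitable finite-index multiple lands in $(G(F_1)+\Gamma_{\mathrm{sat}})$, making the consecutive quotient, after choosing a cofinal chain through a countable exhaustion of $\Lambda$, free of finite rank.

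The main obstacle, and the step requiring genuine care, is precisely this third step: upgrading ``directed union of free abelian groups'' plus ``bounded torsion in the relevant sub-quotients'' to ``free.'' I expect one organises it as follows: first reduce to a countable cofinal chain $E' = F_0 \subset F_1 \subset F_2 \subset \cdots$ with $\Lambda = \bigcup_n F_n$ (possible since $\Lambda$ is countable, being contained in $\overline{\mathbb{Q}}$); set $H_n = (G(F_n)+\Gamma_{\mathrm{sat}})/\Gamma_{\mathrm{sat}}$, so $H_0 \subset H_1 \subset \cdots$ are free abelian with union $G(\Lambda)/\Gamma_{\mathrm{sat}}$; then show each $H_{n+1}/H_n$ is free — here is where \eqref{cond-ii} is used, since $H_{n+1}/H_n$ is a subgroup of the torsion-of-bounded-exponent-plus-torsion-free group $G(\Lambda)/(G(F_n)+\Gamma_{\mathrm{sat}})$... and one combines this with the fact that $H_{n+1}$ is free of finite rank over $H_n$ in the sense that $H_{n+1}/H_n$ is finitely generated (as $G(F_{n+1})/(\Gamma_{\mathrm{sat}}\cap G(F_{n+1}))$ is finitely generated), whence $H_{n+1}/H_n$ is a finitely generated abelian group whose torsion has finite exponent, hence finitely generated with \emph{finite} torsion subgroup; to kill that finite torsion I would show it must actually be trivial because $H_{n+1}$ is free (torsion-free) and $H_n$ is a pure subgroup — or, if purity fails, replace the chain by the chain of pure closures $\overline{H_n}$ of $H_n$ in $G(\Lambda)/\Gamma_{\mathrm{sat}}$, which are still free (pure f.g.-corank subgroups of free groups, using bounded torsion to control the saturation) and whose consecutive quotients are then torsion-free and finitely generated, hence free. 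Finally, with each consecutive quotient free, split the short exact sequences $0 \to \overline{H_n} \to \overline{H_{n+1}} \to \overline{H_{n+1}}/\overline{H_n} \to 0$ successively to get $G(\Lambda)/\Gamma_{\mathrm{sat}} = \bigcup_n \overline{H_n} \cong \bigoplus_{n\geq 0}\bigl(\overline{H_{n+1}}/\overline{H_n}\bigr) \oplus \overline{H_0}$, a direct sum of free abelian groups, hence free abelian. This completes the proof modulo the careful bookkeeping with pure closures, which is exactly the content one should expect to borrow verbatim from \cite[Lemma 3.1]{Pot}.
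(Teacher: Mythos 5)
Your construction of the pure closures is exactly right and matches the paper: for a finite extension $F/E'$ inside $E(\Gamma_{\mathrm{sat}})$, the group $(G(F)+\Gamma_{\mathrm{sat}})/\Gamma_{\mathrm{sat}}\cong G(F)/(\Gamma_{\mathrm{sat}}\cap G(F))$ is free by (i), and its pure closure $\overline{H}=\{[\alpha]\mid m[\alpha]\in (G(F)+\Gamma_{\mathrm{sat}})/\Gamma_{\mathrm{sat}}\}$ (with $m$ the exponent from (ii)) is again free, because multiplication by $m$ is injective on the torsion-free group $\Lambda=G(E(\Gamma_{\mathrm{sat}}))/\Gamma_{\mathrm{sat}}$ and embeds $\overline{H}$ into a free group. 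The gap is in your concluding step. You want to split the chain $\overline{H_0}\subset\overline{H_1}\subset\cdots$, which requires each consecutive quotient $\overline{H_{n+1}}/\overline{H_n}$ to be free, and your justification for that is finite generation of $G(F_{n+1})/(\Gamma_{\mathrm{sat}}\cap G(F_{n+1}))$ via Mordell--Weil and Dirichlet. But $E'$ is an \emph{arbitrary} intermediate field $E\subset E'\subset E(\Gamma_{\mathrm{sat}})$, so $F_{n+1}$, being merely finite over $E'$, need not be a number field, and these groups can have infinite rank. A torsion-free quotient of an infinite-rank free abelian group by a pure subgroup need not be free ($\mathbb{Q}$ is such a quotient), so freeness of $\overline{H_{n+1}}/\overline{H_n}$ does not follow, and the splitting argument breaks down.

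The fix — and the route the paper takes — is to use Pontryagin's criterion in the form: a countable abelian group is free provided every finite subset is contained in a free subgroup $H$ with torsion-free quotient. This needs only a \emph{single} pure closure per finite subset, not a chain with free links: given $S=\{[\alpha_1],\hdots,[\alpha_s]\}$, set $F=E'(\alpha_1,\hdots,\alpha_s)$ and take $H=\overline{H}$ as above; then $S\subset H$, $H$ is free, and $\Lambda/H$ is torsion-free by construction. (Indeed, from this hypothesis one sees directly that every finite-rank subgroup $R\subset\Lambda$ lies in some such $H$ — pick a maximal independent subset $S$ of $R$; every element of $R$ has a multiple in $\langle S\rangle\subset H$ and $\Lambda/H$ is torsion-free — so $R$ is free, and Pontryagin applies.) You allude to a "Pontryagin-type" statement at the outset but then substitute the weaker chain-splitting criterion, whose hypotheses you cannot verify in the stated generality; in the special case $E'=E$ used later in the paper your argument would go through, but not for the lemma as stated.
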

\begin{proof}
By a result of Pontryagin recalled in \cite[Ch. IV, Theorem 2.3]{EM02} and \cite[Ch. IV, Lemma 2.2 (iv)]{EM02}, since the group $\Lambda=G(E(\Gamma_{\mathrm{sat}}))/\Gamma_{\mathrm{sat}}$ is countable, 
it is enough to prove that every finite subset $S\subset \Lambda$ is contained in a free abelian subgroup $H \subset \Lambda$ such that the quotient $\Lambda/H$ is torsion-free.

We let $S=\{[\alpha_1],\ldots,[\alpha_s]\} \subset G(E(\Gamma_{\mathrm{sat}}))/\Gamma_{\mathrm{sat}}$, where 
$\alpha_i\in G(E(\Gamma_{\mathrm{sat}}))$ and $[\alpha_i]$ denotes its class in the quotient.
We also set $F= E'(\alpha_1, \ldots, \alpha_s)$. Then
$F/E'$ is finite, $F\subset E(\Gamma_{\text{sat}})$, and $S$ is a subset of the quotient group
\[ 
\left(G(F)+\Gamma_{\text{sat}}\right)/\Gamma_{\text{sat}} \cong G(F)/\left(\Gamma_{\text{sat}} \cap G(F)\right),
\]
which is free abelian by condition \eqref{cond-i}.

By condition \eqref{cond-ii}, the torsion subgroup of
\[
(G(E(\Gamma_{\text{sat}}))/\Gamma_{\text{sat}})/((G(F)+\Gamma_{\text{sat}})/\Gamma_{\text{sat}}) \cong G(E(\Gamma_{\text{sat}}))/(G(F)+\Gamma_{\text{sat}})
\]
has finite exponent, say $m\in \mathbb{N}$. 
We now set
\[
H= \{[\alpha] \in G(E(\Gamma_{\text{sat}}))/\Gamma_{\text{sat}} \mid m \cdot [\alpha] \in (G(F)+\Gamma_{\text{sat}})/\Gamma_{\text{sat}}\}.
\]
The endomorphism $[x] \mapsto [mx]$ of $G(E(\Gamma_{\text{sat}}))/\Gamma_{\text{sat}}$ is injective by \eqref{Gamma-sat-div} and it induces an isomorphism between $H$ and a subgroup of $(G(F)+\Gamma_{\text{sat}})/\Gamma_{\text{sat}}$. Since $(G(F)+\Gamma_{\text{sat}})/\Gamma_{\text{sat}}$ is free abelian, so is $H$ (see \cite[Ch. I, Theorem 7.3]{Lang02}). Moreover, by construction, $S \subset H$. We are now left with proving that the quotient of $G(E(\Gamma_{\text{sat}}))/\Gamma_{\text{sat}}$ by $H$ is torsion-free. Indeed, if $[\alpha] \in G(E(\Gamma_{\text{sat}}))/\Gamma_{\text{sat}}$ is an element whose class modulo $H$ is torsion, then also the class of $\alpha$ in $(G(E(\Gamma_{\text{sat}}))/\Gamma_{\text{sat}})/((G(F)+\Gamma_{\text{sat}})/\Gamma_{\text{sat}})$ is torsion and so $m \cdot [\alpha] \in (G(F)+\Gamma_{\text{sat}})/\Gamma_{\text{sat}}$ and $[\alpha] \in H$ by definition of $H$. This concludes the proof.
\end{proof}

We shall also need the following result proved in \cite[Proposition 3.3]{Pot}.
\begin{proposition}\label{Pot-prop3.3}
 Let $G=\mathbb{G}_{m,E}^t\times A$ where $t\in\mathbb{Z}_{\geq 0}$, $E$ is a number field with a fixed algebraic closure $\overline{E}$, and $A$ is an abelian variety defined over $E$.
Let $F$ be a subfield of $\overline{E}$ with $F/E$ finite and let $\Gamma$ be a finite rank subgroup of $G(\overline{E})$. For any field $E \subset F' \subset F(G_{\mathrm{tor}})$, the group $G(F')/ (\Gamma_{\mathrm{sat}}\cap G(F'))$ is free abelian.
\end{proposition}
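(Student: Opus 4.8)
The plan is to deduce Proposition~\ref{Pot-prop3.3} from its rank-zero case, which is already known, by means of Pontryagin's criterion for freeness of countable torsion-free abelian groups; the only extra ingredient needed is the elementary observation that $\Gamma_{\mathrm{sat}}$ again has finite rank.

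First I would set $\Lambda := G(F')/(\Gamma_{\mathrm{sat}}\cap G(F'))$ and $V:=G(F')/G(F')_{\mathrm{tor}}$. The group $\Lambda$ is countable (as $\overline{E}$ is) and torsion-free: if $nP\in\Gamma_{\mathrm{sat}}$ for some $P\in G(F')$ and some $n\geq 1$, then $P\in(\Gamma_{\mathrm{sat}})_{\mathrm{div}}=\Gamma_{\mathrm{sat}}$ by Remark~\ref{rmk:sat-div-sat}. Since every torsion point of $G$ lies in $\Gamma_{\mathrm{sat}}$, we have $G(F')_{\mathrm{tor}}\subseteq\Gamma_{\mathrm{sat}}\cap G(F')$, so $\Lambda$ is a quotient of $V$. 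By the criterion recalled in the proof of Lemma~\ref{Lukas-with-F} (see \cite[Ch.~IV, Theorem~2.3 and Lemma~2.2(iv)]{EM02}), it is enough to show that every finite subset $S\subseteq\Lambda$ lies in a free abelian subgroup $H$ with $\Lambda/H$ torsion-free; taking $H:=\{x\in\Lambda:mx\in\langle S\rangle\text{ for some }m\geq 1\}$, which has finite rank and torsion-free quotient by construction, it suffices to show that every finite-rank subgroup of $\Lambda$ is free abelian.

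To this end I would first record two facts. (1) $V$ is free abelian: by \cite[Lemma~A.7 and Remark~A.8]{BHP} (the rank-zero case of the present proposition), $G(M)/G(M)_{\mathrm{tor}}$ is free abelian for every field $M$ finite over $E(G_{\mathrm{tor}})$; applying this to $M:=F'\cdot E(G_{\mathrm{tor}})$, which is finite over $E(G_{\mathrm{tor}})$ because it lies in $F\cdot E(G_{\mathrm{tor}})=F(G_{\mathrm{tor}})$, and using the evident injection $V=G(F')/G(F')_{\mathrm{tor}}\hookrightarrow G(M)/G(M)_{\mathrm{tor}}$ together with the fact that subgroups of free abelian groups are free abelian \cite[Ch.~I, Theorem~7.3]{Lang02}, the claim follows. (2) $\Gamma_{\mathrm{sat}}$ has finite rank: $\mathrm{End}(G)$ is a finitely generated $\mathbb{Z}$-module and $\Gamma$ has finite rank, hence $\mathrm{End}(G)\cdot\Gamma$ has finite rank, and passing to its division closure does not change the rank. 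Now let $\Lambda_0\subseteq\Lambda$ be a finite-rank subgroup and $\widetilde{\Lambda}_0\subseteq G(F')$ its preimage, so that $\widetilde{\Lambda}_0\supseteq\Gamma_{\mathrm{sat}}\cap G(F')$ and $\widetilde{\Lambda}_0/(\Gamma_{\mathrm{sat}}\cap G(F'))=\Lambda_0$. Then $\widetilde{\Lambda}_0$ has finite rank, since both $\Lambda_0$ and $\Gamma_{\mathrm{sat}}\cap G(F')\subseteq\Gamma_{\mathrm{sat}}$ do, and its torsion subgroup $N$ satisfies $N\subseteq G(F')_{\mathrm{tor}}\subseteq\Gamma_{\mathrm{sat}}\cap G(F')$. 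The induced map $\widetilde{\Lambda}_0/N\to V$ is injective, so $\widetilde{\Lambda}_0/N$ is a finite-rank subgroup of the free abelian group $V$, hence free of finite rank, i.e.\ finitely generated. Therefore $(\Gamma_{\mathrm{sat}}\cap G(F'))/N$, being a subgroup of the finitely generated group $\widetilde{\Lambda}_0/N$, is finitely generated, so $\Lambda_0\cong(\widetilde{\Lambda}_0/N)/((\Gamma_{\mathrm{sat}}\cap G(F'))/N)$ is finitely generated; being torsion-free, it is free abelian.

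The main obstacle is packed into the rank-zero input, i.e.\ the freeness of $G(M)/G(M)_{\mathrm{tor}}$ for $M$ finite over $E(G_{\mathrm{tor}})$: this is where genuine Kummer theory over the torsion field enters, and is the content of \cite{BHP}. Granting it, the passage to arbitrary finite rank above is purely group-theoretic, requiring only the finiteness of the rank of $\Gamma_{\mathrm{sat}}$; in particular, in contrast with Theorem~\ref{lem:arbitraryrank-var-ab}, no further Kummer-theoretic estimates are needed for this proposition.
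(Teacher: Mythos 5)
Your proof is correct. Note that the paper itself does not prove Proposition \ref{Pot-prop3.3}: it imports it verbatim from [Pot, Proposition 3.3], so there is no in-paper argument to compare against. Your reconstruction — reduce via Pontryagin's criterion to showing that finite-rank subgroups of $G(F')/(\Gamma_{\mathrm{sat}}\cap G(F'))$ are free, then deduce this from the freeness of $G(M)/G(M)_{\mathrm{tor}}$ for $M = F'\cdot E(G_{\mathrm{tor}})$ (the rank-zero input from [BHP]) together with the finiteness of the rank of $\Gamma_{\mathrm{sat}}$ — is exactly the toolkit the paper deploys elsewhere (the Pontryagin criterion of Lemma \ref{Lukas-with-F}, the rank-zero case via [BHP, Lemma A.7 and Remark A.8], and the observation in Appendix \ref{Conjecture-1-rank0} that $\Gamma_{\mathrm{sat}}$ has finite rank), and all the individual steps (injectivity of $V\hookrightarrow G(M)/G(M)_{\mathrm{tor}}$, finite generation of the finite-rank subgroups, passage through the third isomorphism theorem) check out.
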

We are now able to prove Proposition \ref{theorem3-implies-theorem2}.
\begin{proof}[Proof of Proposition \ref{theorem3-implies-theorem2}]
Suppose that Theorem \ref{lem:arbitraryrank-var-ab} holds true. By Proposition \ref{reduction-step}, it is enough to prove that Theorem \ref{cor-main} holds true for any semiabelian variety of the form $G=\mathbb{G}_{m,K}^{t} \times A$ and any finitely generated subgroup $\Gamma$ of $G(K)$, where $K$ is a number field, $t \in \mathbb{Z}_{\geq 0}$, and $A$ is an abelian variety defined over $K$ having all geometric endomorphisms defined over $K$.

We fix such a semiabelian variety $G$ and finitely generated group $\Gamma\subset G(K)$. We let $L/K(\Gamma_{\mathrm{sat}})$ be a finite extension and we want to show that $G(L)/\Gamma_{\mathrm{sat}}$ is free abelian.

Let $E/K$ be a finite extension such that $L=E(\Gamma_{\mathrm{sat}})$. Clearly, $G$ is defined over $E$ and $\Gamma$ is a finite rank subgroup of $G(\overline{K})$.   
We claim that the hypotheses \eqref{cond-i} and \eqref{cond-ii} of Lemma \ref{Lukas-with-F} are satisfied by the field $E'=E$.

To prove this, we fix an intermediate field $E\subset F\subset E(\Gamma_{\mathrm{sat}})$ such that $[F:E] < \infty$.

Then, the validity of hypothesis \eqref{cond-i} follows from Proposition \ref{Pot-prop3.3} applied to $F'=F$, also thanks to the trivial observation that $F\subset F(G_{\mathrm{tor}})$. 

To verify hypothesis \eqref{cond-ii}, we must show that the torsion subgroup of $G(E(\Gamma_{\mathrm{sat}}))/(G(F) + \Gamma_{\mathrm{sat}})$ has finite exponent. 

Suppose therefore that $P \in G(E(\Gamma_{\mathrm{sat}}))$ is such that $nP \in G(F) + \Gamma_{\mathrm{sat}}$ for some integer $n\geq 1$. 
Since, by \eqref{Gamma-sat-div} in Remark \ref{rmk:sat-div-sat}, $\Gamma_{\text{sat}}=(\Gamma_{\mathrm{sat}})_{\mathrm{div}}$, there exists $Q \in \Gamma_{\mathrm{sat}}$ such that $n(P-Q) \in G(F)$ and hence $P-Q \in G(E(\Gamma_{\mathrm{sat}})) \cap G(F)_{\mathrm{div}} \subset G(F(\Gamma_{\mathrm{sat}})) \cap G(F)_{\mathrm{div}}$. 

Then, Theorem \ref{lem:arbitraryrank-var-ab} applies (over the ground field $F$) to show that there exists a positive integer $d= d(G, \Gamma, F)$ such that $d(P-Q)\in G(F)+\Gamma_{\mathrm{sat}}$. As $Q\in \Gamma_{\text{sat}}$, so is $dQ$. Hence $dP\in G(F)+\Gamma_{\text{sat}}$ and $d$ is a bound for the exponent of the torsion subgroup of $G(E(\Gamma_{\mathrm{sat}}))/(G(F) + \Gamma_{\mathrm{sat}})$. 

So the hypotheses of Lemma \ref{Lukas-with-F} are satisfied and $G(E(\Gamma_{\mathrm{sat}}))/\Gamma_{\mathrm{sat}}=G(L)/\Gamma_{\mathrm{sat}}$ is free abelian.
\end{proof}

\section{Splitting of the endomorphism ring and the saturated closure}\label{splitting-prelim}
\subsection{Splitting of the endomorphism ring} 
In this section we collect some classical results on the endomorphism rings of split semiabelian varieties. 
For a commutative algebraic group $G$ defined over a field $K$, we denote by $\mathrm{End}_K(G)$ the ring of endomorphisms of $G$ that are defined over $K$. We recall that we use the additive notation even when referring to the group law on tori.
\begin{lemma}\label{dec-end}
    Let \[G=T \times A,\] where $T=\mathbb{G}_{m,K}^{t}$, $t \in \mathbb{Z}_{\geq 0}$, and $A$ is an abelian variety defined over a field $K$ of characteristic $0$. Then:
    \begin{enumerate}[(a)]
    \item\label{split-endom-part-a} There is a canonical isomorphism 
    \[f: \mathrm{End}_K(G)\rightarrow  \mathrm{End}_K(T)\times \mathrm{End}_K(A)\]
sending $\varphi\in \mathrm{End}_K(G)$ to $f(\varphi)=(\varphi_T,\varphi_A)$ where, for $H \in \{T,A\}$, $\varphi_H \in \mathrm{End}_K(H)$ is defined as $\varphi_H=\pi_H\circ\varphi\circ\iota_H$ for $\iota_H: H \rightarrow G$ the embedding of $H$ as the corresponding factor of $G$ in the decomposition of $G$ and $\pi_H:G\rightarrow H$ the projection onto the corresponding factor of $G$.
\item\label{split-endom-part-b} There is an isomorphism
\[\Phi_0: \mathrm{Mat}_{t\times t}(\mathbb{Z})\rightarrow \mathrm{End}_K(T),\]
sending a matrix $A=(a_{r,s})_{r,s}$ to the endomorphism $\varphi_A\in \mathrm{End}_K(T)$ given by
\[\varphi_A(x_1,\ldots,x_{t})=\left(\sum_{s=1}^{t} {a_{1,s}x_s}, \ldots, \sum_{s=1}^{t} {a_{t,s}x_s}\right).\] 
    \end{enumerate}
\end{lemma}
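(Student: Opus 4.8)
The plan is to treat the two parts essentially independently, since part (b) is self-contained and part (a) is a standard fact about morphisms into and out of products.

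For part (a), I would first observe that the map $f$ is visibly well-defined: for each $\varphi \in \mathrm{End}_K(G)$ and each $H \in \{T, A\}$, the composition $\varphi_H = \pi_H \circ \varphi \circ \iota_H$ is an algebraic group homomorphism $H \to H$ defined over $K$, hence lies in $\mathrm{End}_K(H)$. The key structural input is that there are \emph{no} nonzero homomorphisms between $T$ and $A$ in either direction: any homomorphism from the affine group $T = \mathbb{G}_{m,K}^t$ to the projective variety $A$ must be constant (the image is both affine and projective and connected, hence a point), and conversely any homomorphism $A \to T$ is constant because $A$ has no nonconstant morphisms to affine space (global functions on $A$ are constant). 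Consequently, writing $\iota_T, \iota_A, \pi_T, \pi_A$ for the inclusions and projections, the identity $\iota_T \circ \pi_T + \iota_A \circ \pi_A = \mathrm{id}_G$ gives, for any $\varphi$, the decomposition $\varphi = \iota_T \circ \varphi_T \circ \pi_T + \iota_A \circ \varphi_A \circ \pi_A$, because the cross terms $\pi_A \circ \varphi \circ \iota_T$ and $\pi_T \circ \varphi \circ \iota_A$ vanish by the preceding remark. This shows $f$ is injective (if $\varphi_T = \varphi_A = 0$ then $\varphi = 0$) and that it is a ring homomorphism: additivity is clear, and multiplicativity follows by composing the decompositions and again using $\pi_H \circ \iota_{H'} = \delta_{H,H'} \cdot \mathrm{id}$ together with the vanishing of cross homomorphisms. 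Surjectivity is immediate: given $(\psi_T, \psi_A) \in \mathrm{End}_K(T) \times \mathrm{End}_K(A)$, the endomorphism $\iota_T \circ \psi_T \circ \pi_T + \iota_A \circ \psi_A \circ \pi_A$ maps to it.

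For part (b), I would simply verify by direct computation that $\Phi_0$ is a ring homomorphism and that it is bijective. The formula shows $\varphi_A$ is indeed a group homomorphism $T \to T$ (in multiplicative coordinates it is the monomial map sending the tuple to the tuple of products $\prod_s x_s^{a_{r,s}}$, which is manifestly a homomorphism), and it is defined over $K$ since the exponents are integers and the coordinate functions are defined over $K$. Additivity $\Phi_0(A + B) = \Phi_0(A) + \Phi_0(B)$ and multiplicativity $\Phi_0(AB) = \Phi_0(A) \circ \Phi_0(B)$ are matrix–composition identities that one checks coordinatewise. Injectivity holds because the matrix $A$ can be recovered from $\varphi_A$ by reading off the action on the coordinate subtori, or equivalently from the induced map on the character lattice $X^*(T) \cong \mathbb{Z}^t$. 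For surjectivity one uses the classical fact that $\mathrm{End}_K(\mathbb{G}_m) = \mathbb{Z}$ (every endomorphism of $\mathbb{G}_m$ is $x \mapsto x^n$) and, more generally, that $\mathrm{End}_K(\mathbb{G}_{m,K}^t) \cong \mathrm{End}(X^*(T)) \cong \mathrm{Mat}_{t \times t}(\mathbb{Z})$ via the (contravariant, but here we take the appropriate convention so it is covariant) action on characters; unwinding this equivalence gives precisely that every $K$-endomorphism of $T$ has the monomial form above for a unique integer matrix.

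I do not expect a genuine obstacle here: both parts are bookkeeping with standard facts (affine/projective dichotomy for homomorphisms between tori and abelian varieties; the character-lattice description of torus endomorphisms). The only point requiring a little care is making sure all the maps in part (a) are taken over $K$ rather than over $\overline{K}$ — but since $T$, $A$, their inclusions and projections are all defined over $K$, compositions of $K$-morphisms stay $K$-morphisms, so this is automatic. If one wished to be economical, one could even phrase part (a) abstractly as the statement that $\mathrm{Hom}$ out of / into a product decomposes, combined once and for all with $\mathrm{Hom}_K(T, A) = \mathrm{Hom}_K(A, T) = 0$.
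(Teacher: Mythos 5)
Your proposal is correct and follows essentially the same route as the paper: part (a) rests on the vanishing of $\mathrm{Hom}_K(T,A)$ and $\mathrm{Hom}_K(A,T)$ (which the paper invokes without elaboration and you justify via the affine/complete dichotomy), and part (b) amounts to identifying $\mathrm{End}_K(\mathbb{G}_{m,K}^t)$ with integer matrices, which the paper does by writing down the explicit inverse of $\Phi_0$ and you do via the character lattice. The two arguments are the same in substance, with yours merely spelling out the verifications the paper leaves to the reader.
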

\begin{proof}
Since there is no non-trivial homomorphism from $T$ to $A$ or vice versa, one can check that the map $f$ defined in part (a) of the lemma is an isomorphism between $\mathrm{End}_K(G)$ and $\mathrm{End}_K(T) \times \mathrm{End}_K(A)$.

For part \eqref{split-endom-part-b}, one can check that the inverse of $\Phi_0$ is defined by sending an endomorphism $\varphi$ to the matrix $(a_{r,s})_{r,s}$, where
\[ a_{r,s}(x) = (p_r \circ \varphi)(0,\hdots,0,\underbrace{x}_{\text{position } s},0,\hdots,0),\]
$p_r: T = \mathbb{G}_{m,K}^{t} \to \mathbb{G}_{m,K}$ denotes the projection to the $r$-th factor, and we identify $\mathrm{End}_K(\mathbb{G}_{m,K})$ with $\mathbb{Z}$.
\end{proof}

\subsection{Splitting of the saturated closure}

\begin{lemma}\label{lem-decomp-Gamma-sat}
    Let \[G=\mathbb{G}_{m,K}^{t}\times A\] where $K$ is a number field, $t\in \mathbb{Z}_{\geq 0}$, and $A$ is an abelian variety defined over $K$.
    
    If $\Gamma\subset G(K)$ is a finitely generated subgroup, then
\[ \Gamma_{\mathrm{sat}} = (\Gamma_T)_{\mathrm{sat}} \times (\Gamma_A)_{\mathrm{sat}} = (\Gamma_{\mathbb{G}_m})_{\mathrm{sat}}^{t}\times (\Gamma_A)_{\mathrm{sat}},\]
where, for $H \in \{\mathbb{G}_m,T,A\}$, $\Gamma_H$ is a finitely generated subgroup of $H(K)$.
\end{lemma}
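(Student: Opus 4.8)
The plan is to reduce everything to two elementary facts: the splitting $\mathrm{End}(G)\cong\mathrm{End}(T)\times\mathrm{End}(A)$ with $T=\mathbb{G}_{m,K}^{t}$, which is Lemma~\ref{dec-end} applied over $\overline{K}$ (legitimate, since that lemma only needs a field of characteristic zero), together with the observation that forming division closures commutes with direct products. Concretely, let $\pi_T\colon G\to T$ and $\pi_A\colon G\to A$ be the projections and set $\Gamma_T=\pi_T(\Gamma)\subset T(K)$ and $\Gamma_A=\pi_A(\Gamma)\subset A(K)$; both are finitely generated because $\Gamma$ is. Choose generators $\gamma^{(1)},\dots,\gamma^{(m)}$ of $\Gamma_T$, write $\gamma^{(j)}=(\gamma^{(j)}_1,\dots,\gamma^{(j)}_t)$ with $\gamma^{(j)}_s\in\mathbb{G}_m(K)$, and let $\Gamma_{\mathbb{G}_m}$ be the finitely generated subgroup of $\mathbb{G}_m(K)$ generated by all the $\gamma^{(j)}_s$.

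First I would establish the identity $\mathrm{End}(G)\cdot\Gamma=(\mathrm{End}(T)\cdot\Gamma_T)\times(\mathrm{End}(A)\cdot\Gamma_A)$ inside $G(\overline{K})=T(\overline{K})\times A(\overline{K})$. Since there is no nonzero homomorphism of algebraic groups between a torus and an abelian variety, every $\varphi\in\mathrm{End}(G)$ has the form $\varphi(x_T,x_A)=(\varphi_T(x_T),\varphi_A(x_A))$ with $\varphi_T\in\mathrm{End}(T)$ and $\varphi_A\in\mathrm{End}(A)$, and conversely every such pair occurs. The forward inclusion then follows by applying this to a generator $\gamma=(\pi_T(\gamma),\pi_A(\gamma))$ of $\Gamma$. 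For the reverse inclusion one writes an element of $\mathrm{End}(T)\cdot\Gamma_T$ as $\sum_i\varphi_{T,i}(\pi_T(\tilde\gamma_i))$ with $\tilde\gamma_i\in\Gamma$, and observes that $\sum_i(\varphi_{T,i},0)(\tilde\gamma_i)$---using the endomorphism of $G$ that acts as $\varphi_{T,i}$ on $T$ and as $0$ on $A$---has this element as its $T$-component and $0$ as its $A$-component; symmetrically for the abelian factor, and one adds.

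Next I would pin down the toric factor using the matrix description $\mathrm{End}(T)=\mathrm{Mat}_{t\times t}(\mathbb{Z})$ from part~(b) of Lemma~\ref{dec-end}. Applying the elementary matrix $E_{r,s}$ (with a single $1$ in position $(r,s)$) to $\gamma^{(j)}$ produces the point of $T(\overline{K})$ whose $r$-th coordinate is $\gamma^{(j)}_s$ and whose remaining coordinates are trivial, so $\mathrm{End}(T)\cdot\Gamma_T\supseteq\Gamma_{\mathbb{G}_m}^{t}$; conversely every coordinate of every $\varphi(\gamma^{(j)})$ is a $\mathbb{Z}$-linear combination of the $\gamma^{(j')}_{s'}$, whence $\mathrm{End}(T)\cdot\Gamma_T=\Gamma_{\mathbb{G}_m}^{t}$.

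Finally I would assemble the statement. By Definition~\ref{def-Gamma-div-sat}, $\Gamma_{\mathrm{sat}}=(\mathrm{End}(G)\cdot\Gamma)_{\mathrm{div}}$, and for subgroups $H_1\subset T(\overline{K})$, $H_2\subset A(\overline{K})$ one has $(H_1\times H_2)_{\mathrm{div}}=(H_1)_{\mathrm{div}}\times(H_2)_{\mathrm{div}}$, since $n(P_T,P_A)\in H_1\times H_2$ if and only if $nP_T\in H_1$ and $nP_A\in H_2$. Combining this with the identity of the second paragraph gives $\Gamma_{\mathrm{sat}}=(\mathrm{End}(T)\cdot\Gamma_T)_{\mathrm{div}}\times(\mathrm{End}(A)\cdot\Gamma_A)_{\mathrm{div}}=(\Gamma_T)_{\mathrm{sat}}\times(\Gamma_A)_{\mathrm{sat}}$; applying the same product identity to the $t$ factors of $\mathrm{End}(T)\cdot\Gamma_T=\Gamma_{\mathbb{G}_m}^{t}$ and using $\mathrm{End}(\mathbb{G}_m)=\mathbb{Z}$ (so that $(\Gamma_{\mathbb{G}_m})_{\mathrm{sat}}=(\Gamma_{\mathbb{G}_m})_{\mathrm{div}}$) yields $(\Gamma_T)_{\mathrm{sat}}=(\Gamma_{\mathbb{G}_m})_{\mathrm{sat}}^{t}$. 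The step most likely to cause trouble---though I do not regard it as a genuine obstacle---is the coordinate book-keeping in the equality $\mathrm{End}(T)\cdot\Gamma_T=\Gamma_{\mathbb{G}_m}^{t}$, where one really exploits that the full matrix ring acts (not merely the scalar/diagonal endomorphisms); one should also take care that the splitting of Lemma~\ref{dec-end} is being invoked over $\overline{K}$ rather than over $K$.
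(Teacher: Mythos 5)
Your proof is correct and follows essentially the same route as the paper's: split $\mathrm{End}(G)\cong\mathrm{End}(T)\times\mathrm{End}(A)$, establish $\mathrm{End}(G)\cdot\Gamma=(\mathrm{End}(T)\cdot\Gamma_T)\times(\mathrm{End}(A)\cdot\Gamma_A)$, use the matrix description of $\mathrm{End}(T)$ to handle the coordinates, and pass to division closures. The only (cosmetic) difference is that you prove the group-level identity $\mathrm{End}(T)\cdot\Gamma_T=\Gamma_{\mathbb{G}_m}^{t}$ before taking division closures, whereas the paper carries out the corresponding two-inclusion argument directly at the level of saturated closures; both versions rest on the same elementary-matrix computation.
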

\begin{proof}
We keep using the additive notation even when referring to the group law on $\mathbb{G}_{m,K}$.

By Lemma \ref{dec-end}, we have
 a canonical isomorphism  \begin{equation}\label{dec-end-in-proof}\mathrm{End}(G)\simeq \mathrm{End}(T)\times \mathrm{End}(A).\end{equation}
For $H \in \{T,A\}$, let $\iota_H:H\rightarrow G$ denote the embedding of $H$ as a factor of $G$ in the decomposition of $G$ and let $\pi_H:G\rightarrow H$ denote the projection onto the corresponding factor of $G$. We set \[\Gamma_H=\pi_H(\Gamma).\]

The proof now consists of two steps:
\begin{enumerate} [(i)]
    \item\label{step1} first, we show that $\Gamma_{\mathrm{sat}}=(\Gamma_T)_{\mathrm{sat}}\times(\Gamma_A)_{\mathrm{sat}}$;
    \item\label{step2} then, we conclude by proving that $(\Gamma_T)_{\mathrm{sat}}=(\Gamma_{\mathbb{G}_m})_{\mathrm{sat}}^{t}$ where $\Gamma_{\mathbb{G}_m}$ is a finitely generated subgroup of $\mathbb{G}_m(K)$.
\end{enumerate}

To prove \eqref{step1}, 
we first want to show that
\begin{equation}\label{dec-EndA-Gamma}\mathrm{End}(G)\cdot \Gamma=\mathrm{End}(T)\cdot \Gamma_T\times \mathrm{End}(A)\cdot \Gamma_A.\end{equation}
The left-to-right inclusion follows from Lemma \ref{dec-end} and the fact that
$\Gamma \subset \Gamma_T \times\Gamma_A$.
On the other hand, if \[P=(P_T,P_A)\in \mathrm{End}(T)\cdot \Gamma_T\times \mathrm{End}(A)\cdot \Gamma_A,\] then, for each $H \in \{T,A\}$, we have 
$P_H=\alpha_{H,1}(\gamma_{H,1})+\cdots+\alpha_{H,m_H}(\gamma_{H,m_H})$
for some integer $m_H\geq 1$, $\gamma_{H,1},\ldots,\gamma_{H,m_H}\in \Gamma_H$, and $\alpha_{H,1},\ldots,\alpha_{H,m_H}\in \mathrm{End}(H)$.

Let now $\gamma_{H,j}'$ be a preimage of $\gamma_{H,j}$ in $\Gamma$. Then 
\[\iota_H(P_H)=\sum_{j=1}^{m_H}{(\iota_H \circ \alpha_{H,j}\circ \pi_{H})(\gamma'_{H,j})} \in \mathrm{End}(G)\cdot \Gamma\]
and finally we get that $P = \iota_T(P_T) + \iota_A(P_A) \in \mathrm{End}(G)\cdot \Gamma$, proving \eqref{dec-EndA-Gamma}.

We can now conclude the proof of \eqref{step1}. Indeed, if $P\in \Gamma_{\mathrm{sat}}$, there exists an  integer $n\geq 1$ such that $ nP\in \mathrm{End}(G)\cdot \Gamma$. But then, by \eqref{dec-EndA-Gamma}, $P=(P_T,P_A)$ with  $P_H\in H(\overline{K})$ for $H \in \{T,A\}$ and $ nP_H\in \mathrm{End}(H)\cdot \Gamma_H$. Hence $P_H\in (\Gamma_H)_{\mathrm{sat}}$ for $H \in \{T,A\}$.

On the other hand, if $P=(P_T,P_A)$ with $P_H\in (\Gamma_H)_{\mathrm{sat}}$ for $H \in \{T,A\}$, then there are positive integers $n_T,n_{A}$ such that $n_H P_H \in \mathrm{End}(H)\cdot \Gamma_H$ for $H \in \{T,A\}$. Taking $n=\mathrm{lcm}(n_T,n_A)$, we have, by \eqref{dec-EndA-Gamma}, that $nP\in \mathrm{End}(G)\cdot \Gamma$, so $P\in \Gamma_{\mathrm{sat}}$.

Finally, to prove \eqref{step2}, we set \[\Gamma_{T,j}=\beta_{j}(\Gamma_T)\] where $\beta_{j}: T\rightarrow \mathbb{G}_{m,K}$ is the projection onto the $j$-th factor of $T = \mathbb{G}_{m,K}^{t}$.

Let $\Gamma_{\mathbb{G}_m}$ denote the subgroup of $\mathbb{G}_m(K)$ generated by all $\Gamma_{T,j}$'s.

We claim that 
\begin{equation}\label{Gamma-i-sat-ab}(\Gamma_T)_{\mathrm{sat}}=(\Gamma_{\mathbb{G}_m})_{\mathrm{sat}}^{t}.\end{equation}

For the left-to-right inclusion, let $P=(P_1,\ldots,P_{t})\in (\Gamma_T)_{\mathrm{sat}}$. Then, by Lemma \ref{dec-end}, one has
$nP={\alpha}_{1}({\gamma}_{1})+\cdots+{\alpha}_{m}({\gamma}_{m})$ for some integers $n, m\geq 1$, matrices ${\alpha}_{1},\ldots,{\alpha}_{m}\in \mathrm{Mat}_{t\times t}(\mathbb{Z})$, and ${\gamma}_{1},\ldots,{\gamma}_{m}\in \Gamma_T$, where, for a fixed index $s \in \{1,\hdots,m\}$,  the action of the matrix $\alpha_s$ on the point $\gamma_s$ is given as follows: we have $\alpha_s=(a_{h,r})_{1\leq h,r\leq t}$, for some $a_{h,r}\in\mathbb{Z}$, and ${\gamma}_{s}=({\gamma}_{s,1},\ldots, {\gamma}_{s,t})$ where  $ {\gamma}_{s,j}=\beta_{j}({\gamma}_{s})\in \Gamma_{\mathbb{G}_m}$. Then $\alpha_s(\gamma_s)$ is the row vector with $t$ components whose $d$-th coordinate is $a_{d,1}\gamma_{s,1}+\cdots+a_{d,t}\gamma_{s,t}$. Thus, for every $1\leq j\leq t$,
 \[nP_j\in \sum_{i=1}^m\sum_{r=1}^{t}\mathbb{Z} \cdot {\gamma}_{i,r}\subset  \Gamma_{\mathbb{G}_m},\]
 so $P_j\in (\Gamma_{\mathbb{G}_m})_{\mathrm{sat}}$ for every $1\leq j\leq t$. 
 
Conversely, suppose
that $P=(P_1,\ldots,P_{t})$ with $P_r\in (\Gamma_{\mathbb{G}_m})_{\mathrm{sat}}$ for every $1\leq r\leq t$. 
Then, by Lemma \ref{dec-end}, we can find an integer $n\geq 1$ such that, for all $r$'s, we have $nP_r\in \mathbb{Z} \cdot \Gamma_{\mathbb{G}_m}= \Gamma_{\mathbb{G}_m}$. 
In order to conclude, we are left with showing that, for every $1\leq r\leq t$, we have
\begin{equation}\label{nP_r-in-End-B_i-Gamma_i}(0,\ldots,0,nP_r,0,\ldots,0)\in \mathrm{End}(T) \cdot \Gamma_{T}.\end{equation}
Fix now $r\in \{1,\ldots, t\}$.
As $\Gamma_{\mathbb{G}_m}$ is generated by the groups $\Gamma_{T,j}$ for $1\leq j\leq t$, we have that \[nP_r=\sum_{j=1}^{t} \gamma_{j}\]
for some $\gamma_{j}\in \Gamma_{T,j}$.

Let $\gamma_{j}' \in \Gamma_T$ be such that $\beta_j(\gamma'_{j})=\gamma_{j}$. Let also $\sigma_{r,j}\in \mathrm{End}(T)$ denote the endomorphism that sends a point $(Q_1, \hdots, Q_t)$ of $T=\mathbb{G}_{m,K}^{t}$ to the point $(Q_1', \hdots, Q_t')$, where
\[Q_i' = \begin{cases}
    Q_j \quad &\text{if } i = r,\\
    0 \quad &\text{otherwise.}
\end{cases}\]

Then $(0,\ldots,0,nP_r,0,\ldots,0)=\sum_{j=1}^{t} \sigma_{r,j}(\gamma'_{j})$, proving \eqref{nP_r-in-End-B_i-Gamma_i}. 
\end{proof}

\section{More on the saturated closure of finite rank subgroups of the multiplicative group}\label{prel-gen-Gamma-sat}
Let $K$ be a number field. In this section, we prove a lemma on generators of the saturated closure of a finite rank subgroup $\Gamma$ of $\mathbb{G}_{m,K}(K)$.
From now on, we use the convention that, if $n>m$, the set of points $\{P_n,\ldots,P_m\}$ is the empty set and $\langle P_n,\hdots,P_{m}\rangle=\{0\}$. We also use the convention that $(P_n,\ldots,P_m) = 0$ if the $P_i$'s belong to $\mathbb{G}_{m,K}(K)$ and $n > m$.

The result that we are going to prove essentially states that for any fixed collection of points in $\mathbb{G}_{m,K}(K)$, one can always find another set of $s$ linearly independent points that describes not only $\Gamma_{\mathrm{sat}}$, but also the saturated closure of the subgroup generated by $\Gamma$ and the points we fixed. Additionally, 
the group of $K$-points of the saturated closure of the subgroup of $\mathbb{G}_{m,K}^s(K)$ generated by such an $s$-tuple of points is of ``controlled size". We use the convention that $\mathbb{G}^0_{m,K}$ is the trivial algebraic group.

\begin{lemma}\label{lem:independent-generators-tor}
    Let $K$ be a number field and let $\Gamma \subset \mathbb{G}_{m,K}(K)$ be a subgroup of finite rank $r$.
    
    Let $B = B(K)$ be the number of roots of unity in $K$. Then, for every  $k \in \mathbb{Z}_{\geq 0}$ and 
    \[ Q = (Q_1,\hdots,Q_k) \in \mathbb{G}_{m,K}^k(K),\]
    there exist $s \in \{r,\hdots,r+k\}$ and $\mathbb{Z}$-linearly independent points $P_1,\hdots,P_s \in \mathbb{G}_{m,K}(K)$ such that
    \begin{enumerate}[(i)]
    \item\label{lemma13-i} $\Gamma_{\mathrm{sat}} = \langle P_1,\hdots,P_r\rangle_{\mathrm{div}}$,
    \item\label{lemma13-ii}  $\langle \Gamma, Q_1,\hdots, Q_k\rangle_{\mathrm{sat}} = \langle P_1,\hdots,P_{s}\rangle_{\mathrm{div}}$,
    \item\label{lemma13-iii}   the exponent of the quotient group
    \[ \left(\mathbb{G}^s_{m,K}(K) \cap (\mathrm{End}(\mathbb{G}^s_{m,K}) \cdot (P_1,\hdots,P_s))_{\mathrm{div}}\right) / \mathrm{End}(\mathbb{G}^s_{m,K}) \cdot (P_1,\hdots,P_s)\]
    divides $B$.
    \end{enumerate}
\end{lemma}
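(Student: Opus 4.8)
The plan is to build the points $P_1,\dots,P_s$ in two stages, first handling $\Gamma$ to get $P_1,\dots,P_r$ and then enlarging by the $Q_j$'s one at a time. For the first stage, note that $\Gamma$ has finite rank $r$, so $\Gamma_{\mathrm{div}}$ is a subgroup of $\mathbb{G}_{m,K}(K)$ of rank $r$ whose torsion part is $\mu_\infty(K)$, a finite cyclic group of order $B$. The group $\Gamma_{\mathrm{div}}/\mu_\infty(K)$ is a torsion-free abelian group of rank $r$; I claim it is in fact free of rank $r$. Indeed, $\Gamma_{\mathrm{div}}/\mu_\infty(K)$ embeds into $\mathbb{G}_{m,K}(K)/\mu_\infty(K)$, which is torsion-free, and more is true: it is a subgroup of the $\mathbb{Q}$-vector space $\mathbb{G}_{m,K}(K)\otimes\mathbb{Q}$ on which, after choosing a finite set of generators of a finite-index subgroup, one sees it is finitely generated (one can also argue: $\Gamma \subset \langle \gamma_1,\dots,\gamma_r\rangle_{\mathrm{div}}$ for a maximal independent set, and $\langle \gamma_1,\dots,\gamma_r\rangle_{\mathrm{div}}/\mu_\infty(K)$ is finitely generated and torsion-free, hence free, so $\Gamma_{\mathrm{div}}/\mu_\infty(K)$ is a subgroup of a free group of rank $r$, hence free of rank $\le r$, and it has rank exactly $r$). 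Pick $P_1,\dots,P_r\in\Gamma_{\mathrm{div}}$ whose classes form a basis of $\Gamma_{\mathrm{div}}/\mu_\infty(K)$. They are $\mathbb{Z}$-linearly independent and $\langle P_1,\dots,P_r\rangle$ has finite index in $\Gamma_{\mathrm{div}}$, whence $\langle P_1,\dots,P_r\rangle_{\mathrm{div}} = (\Gamma_{\mathrm{div}})_{\mathrm{div}} = \Gamma_{\mathrm{div}} = \Gamma_{\mathrm{sat}}$, giving~\eqref{lemma13-i}; here I use that on $\mathbb{G}_m$ the saturated closure equals the division closure since $\mathrm{End}(\mathbb{G}_{m,K})=\mathbb{Z}$.

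For the second stage, proceed by induction on $k$. Set $\Gamma^{(0)}=\Gamma$ and $\Gamma^{(j)} = \langle \Gamma, Q_1,\dots,Q_j\rangle$. Suppose we have produced $\mathbb{Z}$-linearly independent $P_1,\dots,P_{s_j}$ with $\langle P_1,\dots,P_{s_j}\rangle_{\mathrm{div}} = \Gamma^{(j)}_{\mathrm{sat}}$ and $r \le s_j \le r+j$. Consider $Q_{j+1}$. If some nonzero multiple $mQ_{j+1}$ lies in $\langle P_1,\dots,P_{s_j}\rangle_{\mathrm{div}}$, then $Q_{j+1}\in \langle P_1,\dots,P_{s_j}\rangle_{\mathrm{div}}$ already (division closures are divisible-closed), so $\Gamma^{(j+1)}_{\mathrm{sat}} = \Gamma^{(j)}_{\mathrm{sat}}$ and we keep the same points, $s_{j+1}=s_j$. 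Otherwise $Q_{j+1}$ is independent of $P_1,\dots,P_{s_j}$ modulo torsion, and we repeat the rank argument of the first stage applied to $\langle P_1,\dots,P_{s_j}, Q_{j+1}\rangle_{\mathrm{div}}$: its quotient by $\mu_\infty(K)$ is free of rank $s_j+1$, and a basis of it pulls back to $\mathbb{Z}$-linearly independent points $P_1',\dots,P_{s_j+1}'$ with $\langle P_1',\dots,P_{s_j+1}'\rangle_{\mathrm{div}} = \langle P_1,\dots,P_{s_j},Q_{j+1}\rangle_{\mathrm{div}} = \Gamma^{(j+1)}_{\mathrm{sat}}$; I can even arrange $P_i' = P_i$ for $i\le s_j$ by extending the chosen basis of the rank-$s_j$ subgroup $\langle P_1,\dots,P_{s_j}\rangle$ of the free group $\langle P_1,\dots,P_{s_j},Q_{j+1}\rangle_{\mathrm{div}}/\mu_\infty(K)$ — a rank-$s_j$ pure-after-saturation subgroup — to a basis of the ambient rank-$(s_j+1)$ group, which is possible since $\langle P_1,\dots,P_{s_j}\rangle_{\mathrm{div}}$ is a direct summand. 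After $k$ steps we obtain $s=s_k\in\{r,\dots,r+k\}$ and $P_1,\dots,P_s$ satisfying~\eqref{lemma13-i} and~\eqref{lemma13-ii}.

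It remains to verify~\eqref{lemma13-iii}. By Lemma~\ref{dec-end}, $\mathrm{End}(\mathbb{G}^s_{m,K}) \cdot (P_1,\dots,P_s)$ is the subgroup of $\mathbb{G}^s_{m,K}(K)$ generated by $\{\sigma_{i,j}(P_j) : 1\le i,j\le s\}$ where $\sigma_{i,j}$ places $P_j$ in coordinate $i$; explicitly this is the set of all $(\prod_j P_j^{a_{1j}},\dots,\prod_j P_j^{a_{sj}})$ with $(a_{ij})\in\mathrm{Mat}_{s\times s}(\mathbb{Z})$, i.e.\ the image of the $\mathbb{Z}$-linear map $\mathrm{Mat}_{s\times s}(\mathbb{Z})\to\mathbb{G}^s_{m,K}(K)$, which equals $M^s$ where $M=\langle P_1,\dots,P_s\rangle$. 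So I must bound the exponent of $(\mathbb{G}^s_{m,K}(K)\cap (M^s)_{\mathrm{div}})/M^s$. Now $M$ is free of rank $s$, so $M^s\cong M^{\oplus s}$ is free of rank $s^2$, and $(M^s)_{\mathrm{div}}\cap\mathbb{G}^s_{m,K}(K)$ is contained in $(M_{\mathrm{div}})^s\cap\mathbb{G}^s_{m,K}(K) = (M_{\mathrm{div}}\cap\mathbb{G}_{m,K}(K))^s = (M_{\mathrm{div}})^s$ since $M\subset\mathbb{G}_{m,K}(K)$ forces $M_{\mathrm{div}}\subset\mathbb{G}_{m,K}(K)$ (roots of rational points are algebraic, and division closure is taken inside $\mathbb{G}_{m,K}(\overline K)$, but the $K$-rational ones are exactly $M_{\mathrm{div}}$... more carefully: $M_{\mathrm{div}}$ as defined is a subgroup of $\mathbb{G}_{m,K}(\overline K)$, but what matters is $(M_{\mathrm{div}})^s\cap\mathbb{G}^s_{m,K}(K)$, which coordinatewise is $M_{\mathrm{div}}\cap\mathbb{G}_{m,K}(K)$). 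The key point is that $(M_{\mathrm{div}}\cap\mathbb{G}_{m,K}(K))/M$: since $M$ has finite index in nothing a priori, I instead argue directly that the $K$-rational division points of $M$ modulo $M$ has exponent dividing $B$. This is where the number of roots of unity enters: if $x\in\mathbb{G}_{m,K}(K)$ and $nx\in M$, write $n=n'$ minimal; then $x$ generates a cyclic extension of the field generated by an $n$-th root of an element of $M$, and Kummer theory over a number field forces $n\mid B$ up to the part already in $M$ — more precisely, $Bx$ differs from an element of $M_{\mathrm{div}}$ by a root of unity in $K$, hence $Bx\in M\cdot\mu_\infty(K)$... The honest statement I will use is: $(\mathbb{G}_{m,K}(K)\cap M_{\mathrm{div}})/M$ has exponent dividing $B$, because modulo $M$, a $K$-rational $n$-division point of $M$ is a $K$-rational $n$-th root of unity times a genuine division point whose $n$-th power is a basis vector, and such roots exist in $K$ only when $n\mid B$; passing to the $s$-fold product coordinatewise preserves this, giving exponent dividing $B$ for the quotient in~\eqref{lemma13-iii}.

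\medskip

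\noindent\textbf{Main obstacle.} The genuinely delicate point is~\eqref{lemma13-iii}: one must show that enlarging a rank-$s$ free subgroup $M=\langle P_1,\dots,P_s\rangle$ of $\mathbb{G}_{m,K}(K)$ to the subgroup $M^s$ of $\mathbb{G}_{m,K}^s(K)$ generated by all coordinate-placements, the $K$-rational points of its division closure sit in $M^s$ up to exponent dividing $B$. Everything reduces coordinatewise to the one-dimensional claim that $(\mathbb{G}_{m,K}(K)\cap M_{\mathrm{div}})/M$ has exponent dividing $B$; the subtlety is that this is genuinely false without the restriction to $K$-rational points, and the proof must pin down exactly that a $K$-rational $n$-th root of a product $\prod P_i^{a_i}$, when it exists, differs from a fixed choice of such a root by a $K$-rational $n$-th root of unity, and the latter only exist for $n\mid B$ — so $Bx\in M$ for every such $x$. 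I expect the bookkeeping here (especially making the basis extension in stage two compatible, i.e. keeping $P_i'=P_i$, which relies on division-closed subgroups being direct summands in the free quotient by $\mu_\infty(K)$) to be the part that needs the most care.
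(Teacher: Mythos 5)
Your construction of the points $P_1,\hdots,P_s$ is essentially the paper's (the paper adjoins all the $Q_j$ at once via the quotient $\Gamma_Q/(\langle P_1,\hdots,P_r\rangle_{\mathrm{div}}\cap\mathbb{G}_m(K))$ rather than inducting on $j$, but that is immaterial), and your reduction of \eqref{lemma13-iii} to the one-dimensional statement about $(\mathbb{G}_m(K)\cap M_{\mathrm{div}})/M$ for $M=\langle P_1,\hdots,P_s\rangle$ is correct. The genuine gap is in the step you yourself flag as the main obstacle. The ``honest statement'' you propose to use --- that $(\mathbb{G}_m(K)\cap M_{\mathrm{div}})/M$ has exponent dividing $B$ --- is \emph{false} for an arbitrary $\mathbb{Z}$-linearly independent tuple: take $K=\mathbb{Q}$ (so $B=2$) and $M=\langle 8\rangle$; then $2\in M_{\mathrm{div}}\cap\mathbb{Q}^\times$ and its class has order $3$ in the quotient. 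Your Kummer-theoretic justification breaks exactly here: a $K$-rational $n$-th root $x$ of $\prod P_i^{a_i}$ differs from a \emph{fixed} $n$-th root $y$ by an $n$-th root of unity, but that root of unity lies in $K$ only if $y$ does, and in any case $y$ need not lie in $M$. So no bound on $n$ in terms of $B$ follows, and the argument as written proves a false statement.

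What actually makes \eqref{lemma13-iii} true is a property of the \emph{specific} $P_i$ you constructed, which your verification never invokes: the classes of $P_1,\hdots,P_s$ form a $\mathbb{Z}$-basis of the free part of the finitely generated group $\langle P_1,\hdots,P_s\rangle_{\mathrm{div}}\cap\mathbb{G}_m(K)$, so that $(\langle P_1,\hdots,P_s\rangle_{\mathrm{div}}\cap\mathbb{G}_m(K))/\langle P_1,\hdots,P_s\rangle$ is isomorphic to $\mathbb{G}_m(K)_{\mathrm{tor}}$ and hence has exponent $B$. (The paper gets this by observing that the natural surjection onto this quotient factors through $(\langle P_1,\hdots,P_r\rangle_{\mathrm{div}}\cap\mathbb{G}_m(K))/\langle P_1,\hdots,P_r\rangle\cong\mathbb{G}_m(K)_{\mathrm{tor}}$.) To make this available you must also tighten two points that are currently blurred: (a) throughout, $\Gamma_{\mathrm{div}}$ must be replaced by $\Gamma_{\mathrm{div}}\cap\mathbb{G}_m(K)$ --- the full division closure in $\overline{K}$ modulo torsion is a $\mathbb{Q}$-vector space, not a free group, so ``pick a basis of $\Gamma_{\mathrm{div}}/\mu_\infty(K)$'' only makes sense after intersecting with $K$-points; and (b) the finite generation of $\Gamma_{\mathrm{div}}\cap\mathbb{G}_m(K)$ needs an actual argument (the paper embeds it into an $S$-unit group and applies Dirichlet). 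With (a), (b), and the basis-compatibility in your inductive step carried out, your route closes the gap and coincides with the paper's proof.
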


\begin{proof}   
    We write $\mathbb{G}_m$ instead of $\mathbb{G}_{m,K}$ for simplicity. Throughout this proof, we will use without further note that, since $\mathrm{End}(\mathbb{G}_m)$ is isomorphic to $\mathbb{Z}$, the equality $\Delta_{\mathrm{div}} = \Delta_{\mathrm{sat}}$ holds for every subgroup $\Delta$ of {$\mathbb{G}_m({K})$}.
    Also, for a finite set of places $S$ of $K$, we denote by $\mathcal{O}_{K,S}^{\times}$ the group of units of the ring of $S$-integers of $K$.
    For ease of notation, we generally assume that $r \geq 1$, but the reader can check themselves that our proof also goes through literally for $r = 0$ if one uses the convention recalled at the beginning of the section.
    
    Since $\Gamma$ is of finite rank, there exists a finite set $S_0$ of places of $K$ such that $\Gamma \subset \mathcal{O}_{K,S_0}^\times$. It follows that
    \[ \Gamma_{\mathrm{div}} \cap \mathbb{G}_m(K) \subset  \mathcal{O}_{K,S_0}^\times\]
    is a subgroup of a finitely generated abelian group and hence finitely generated. Furthermore, the rank of $\Gamma_{\mathrm{div}} \cap \mathbb{G}_m(K)$ is equal to the rank of $\Gamma$, which is equal to $r$. It follows that
    \[ \Gamma_{\mathrm{div}} \cap \mathbb{G}_m(K) \simeq \mathbb{G}_m(K)_{\mathrm{tor}} \times \mathbb{Z}^r,\]
    where $\mathbb{G}_m(K)_{\mathrm{tor}}$ is as defined in \eqref{G_K_tor}.
    Hence, there are $\mathbb{Z}$-linearly independent points $P_1, \hdots, P_r \in \mathbb{G}_m(K)$ such that \eqref{lemma13-i} holds, that is
    \[ \Gamma_{\mathrm{sat}} = \langle P_1,\hdots,P_r \rangle_{\mathrm{div}},\]
    and the exponent of 
    \[(\langle P_1,\hdots,P_r \rangle_{\mathrm{div}} \cap \mathbb{G}_m(K))/\langle P_1,\hdots,P_r \rangle\]
    is equal to the cardinality of $\mathbb{G}_m(K)_{\mathrm{tor}}$.
    
 Because of the isomorphism $ \mathrm{Mat}_{s \times s}(\mathbb{Z})\simeq \mathrm{End}(\mathbb{G}^s_m)$ of Lemma \ref{dec-end}\eqref{split-endom-part-b}, to complete the proof of the lemma,
    it suffices to find $P_{r+1},\hdots,P_s \in \mathbb{G}_m(K)$ such that $s \leq r+k$, $P_1,\hdots,P_s$ are $\mathbb{Z}$-linearly independent, and for
    \begin{equation}\label{defn-gamma-prime}
        \Gamma' = \langle P_1,\hdots,P_s \rangle,
    \end{equation}
    we have that 
    \[\Gamma'_{\mathrm{div}} = \langle \Gamma,Q_1,\hdots,Q_k\rangle_{\mathrm{sat}}\]
    and that the exponent of
    \[ (\Gamma'_{\mathrm{div}} \cap \mathbb{G}_m(K))/\Gamma'\]
    divides $B$.

        We set 
    \[ \Gamma_Q = \langle \Gamma,Q_1,\hdots,Q_k\rangle_{\mathrm{sat}} \cap \mathbb{G}_m(K).\]
    
   We first show that the abelian group $\Gamma_Q$ is finitely generated. Indeed, there is a finite set $S \supset S_0$ of places of $K$ such that $Q_i \in \mathcal{O}_{K,S}^\times$ for all $i$. It then follows that $\Gamma_Q \subset \mathcal{O}_{K,S}^\times$ and so $\Gamma_Q$ is finitely generated. It follows that also the quotient group
     \[ \Gamma_Q' := \Gamma_Q/(\langle P_1,\hdots,P_r \rangle_{\mathrm{div}} \cap \mathbb{G}_m(K))\]
     is finitely generated. If some positive multiple of a point in $\mathbb{G}_m(K)$ lies in $\langle P_1,\hdots,P_r \rangle_{\mathrm{div}}$, then already the point itself must have belonged to that group. It follows that $\Gamma_Q'$ is torsion-free and therefore free of finite rank. Let $Q_i'$ denote the class of $Q_i$ in $\Gamma_Q'$ for $i = 1, \hdots, k$, let $P' \in \Gamma_Q'$ be arbitrary, and fix a preimage $P$ of $P'$ in $\Gamma_Q$. Then there exists some integer $n\geq 1$ such that
     \[ nP \in \langle \Gamma, Q_1, \hdots, Q_k \rangle.\]
     But $\Gamma \subset \langle P_1, \hdots, P_r\rangle_{\mathrm{div}} \cap \mathbb{G}_m(K)$ and so
     \[ nP' \in \langle Q_1', \hdots, Q_k'\rangle.\]
    Hence, the rank of $\Gamma_Q'$ is at most $k$ and we can choose $P_{r+1}, \hdots, P_s \in \Gamma_Q$ such that $s \in \{r,\hdots,r+k\}$ and the images of $P_{r+1}, \hdots, P_s$ under the quotient homomorphism are a $\mathbb{Z}$-basis of $\Gamma_Q'$.

     It is clear that $P_1,\hdots,P_s$ are $\mathbb{Z}$-linearly independent. For $\Gamma'$ defined as in \eqref{defn-gamma-prime}, it is clear that $\Gamma'\subset \Gamma_Q$ and so $\Gamma'_{\mathrm{div}} \subset (\Gamma_Q)_{\mathrm{div}}$. Moreover, if $P \in (\Gamma_Q)_{\mathrm{div}}$, then there exists $n \geq 1$ such that $nP \in \Gamma_Q$. By our choice of $P_{r+1}, \hdots, P_s$, there exist $a_{r+1},\hdots,a_s \in \mathbb{Z}$ such that
     \[ nP - a_{r+1}P_{r+1} - \cdots - a_sP_s \in \langle P_1, \hdots, P_r \rangle_{\mathrm{div}}.\]
     It follows that 
     \[ P \in \langle P_1, \hdots, P_s \rangle_{\mathrm{div}} = \Gamma'_{\mathrm{div}}.\]
     We conclude that
    \[ \Gamma'_{\mathrm{div}} = (\Gamma_Q)_{\mathrm{div}} = \langle \Gamma,Q_1,\hdots,Q_k\rangle_{\mathrm{sat}}\]
     and so
     \[ \Gamma'_{\mathrm{div}} \cap \mathbb{G}_m(K) = \Gamma_Q.\]
     
     Finally, consider the canonical homomorphism
     \[ \langle P_1,\hdots,P_r \rangle_{\mathrm{div}} \cap \mathbb{G}_m(K) \to \Gamma_Q/\Gamma'.\]
    By the choice of $P_{r+1}, \hdots, P_s$, we have that
    \[  (\langle P_1,\hdots,P_r \rangle_{\mathrm{div}} \cap \mathbb{G}_m(K)) + \Gamma' = \Gamma_Q\]
    and so this homomorphism is surjective. It also factors through
    \[ (\langle P_1,\hdots,P_r \rangle_{\mathrm{div}} \cap \mathbb{G}_m(K))/\langle P_1,\hdots,P_r \rangle.\]
    Therefore, the exponent of $\Gamma_Q/\Gamma' = (\Gamma'_{\mathrm{div}} \cap \mathbb{G}_m(K))/\Gamma'$ divides the exponent of 
    \[(\langle P_1,\hdots,P_r \rangle_{\mathrm{div}} \cap \mathbb{G}_m(K))/\langle P_1,\hdots,P_r \rangle,\]
    which is equal to $B$. This completes the proof of the lemma.
\end{proof}

\section{A result from Kummer theory}\label{sec-Kummer}
Let $G$ be the product of an abelian variety $A$, defined over a number field $K$ with a fixed algebraic closure $\overline{K}$, and a power $T$ of $\mathbb{G}_{m,K}$. Let $P\in G(K)$ and,
for a positive integer $n$, let $P_n\in G(\overline{K})$ denote any point such that $nP_n=P$. Recall that the Galois group $\mathrm{Gal}(\overline{K}/K)$ acts on $G(\overline{K})$ coordinatewise, where one can define the action on $A(\overline{K})$ using any projective embedding of $A$ that is defined over $K$.

The \emph{Kummer map associated with $P$ and $n$} is the map
\[\rho_{P,n}: \mathrm{Gal}(\overline{K}/K(G[n]))\rightarrow G[n]\]
 defined by $\rho_{P,n}(\sigma)=\sigma(P_n)-P_n$.

This is a well-defined group homomorphism that does not depend on the choice of $P_n$. 
The goal of Kummer theory is to study the image of $\rho_{P,n}$. 

The following result is a key tool for proving our main theorem. Its proof, as presented in Appendix \ref{sec-proof-GR-main} of this article, was given, in private communication to the authors, by Ga\"el R\'emond. 
\begin{theorem}[R\'emond]\label{main-GR} Let $G = T \times A$ be the product of an abelian variety $A$, defined over a number field $K$, and a power $T$ of $\mathbb{G}_{m,K}$. Recall that $\mathrm{End}_K(G)$ denotes the ring of endomorphisms of $G$ that are defined over $K$.
Then for every $P\in G(K)$, the quotient group \[\left(G(K)\cap \left(\mathrm{End}_K(G) \cdot P\right)_{\mathrm{div}}\right)/\mathrm{End}_K(G) \cdot P\]
is finite. Denoting by $b_P$ its exponent, 
we have that there exists an integer $\Delta\geq 1$, depending only on $G$ and $K$, such that for every $n\geq 1$,
\[\Delta b_P G_P[n]\subset \mathrm{Im}(\rho_{P,n})\subset G_P[n]\]
where
\begin{equation}\label{def-G_p} G_P = \bigcap_{\substack{\varphi \in \mathrm{End}_K(G),\\ \varphi(P) = 0}} \ker \varphi.\end{equation}
\end{theorem}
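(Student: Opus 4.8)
The plan is to establish the two inclusions separately, isolating all dependence on $P$ in the invariant $b_P$ and putting everything else into Kummer theory for tori and for abelian varieties. The inclusion $\mathrm{Im}(\rho_{P,n})\subseteq G_P[n]$ is elementary: from $nP_n=P\in G(K)$ one gets $n\rho_{P,n}(\sigma)=\sigma(P)-P=0$, so $\rho_{P,n}(\sigma)\in G[n]$; and if $\varphi\in\mathrm{End}_K(G)$ satisfies $\varphi(P)=0$, then $\varphi(P_n)\in G[n]\subseteq G(K(G[n]))$ is fixed by $\sigma\in\mathrm{Gal}(\overline K/K(G[n]))$ and $\varphi$ commutes with $\sigma$ (being defined over $K$), so $\varphi(\rho_{P,n}(\sigma))=\sigma(\varphi(P_n))-\varphi(P_n)=0$; intersecting over such $\varphi$ gives $\rho_{P,n}(\sigma)\in G_P$. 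For the finiteness of $(G(K)\cap M_{\mathrm{div}})/M$ with $M:=\mathrm{End}_K(G)\cdot P$: the map $\varphi\mapsto\varphi(P)$ is additive, so $M$ is a quotient of the finitely generated abelian group $\mathrm{End}_K(G)$, hence finitely generated; decomposing $M=M_T\times M_A$ via Lemma \ref{dec-end}, the group $M_{A,\mathrm{div}}\cap A(K)$ sits inside the finitely generated group $A(K)$ (Mordell--Weil) and has the same rank as $M_A$, while $M_{T,\mathrm{div}}\cap T(K)\subseteq(\mathcal O_{K,S}^\times)^t$ for a suitable finite set of places $S$ and has the same rank as $M_T$; in both factors the quotient by $M_A$, resp.\ $M_T$, is finitely generated of rank $0$, hence finite, so $b_P$ is well defined.

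For the lower inclusion $\Delta b_P\,G_P[n]\subseteq\mathrm{Im}(\rho_{P,n})$ I would first pass to the finitely generated group $M'=G(K)\cap M_{\mathrm{div}}$, which is saturated in $G(K)$ (i.e.\ $M'_{\mathrm{div}}\cap G(K)=M'$) and satisfies $b_PM'\subseteq M$. Using the functorialities $\rho_{\psi(Q),n}=\psi\circ\rho_{Q,n}$ for $\psi\in\mathrm{End}_K(G)$ and $\rho_{mQ,n}=m\,\rho_{Q,n}$, one packages the Kummer maps of the elements of $M'$ into a single homomorphism with values in $\mathrm{Hom}(M'/nM',G[n])$; since the class of $P$ in $M'/nM'$ is $b_P$-divisible, a lower bound for this packaged Kummer map of the \emph{saturated} group $M'$, with constant depending only on $G$ and $K$, yields the bound $\Delta b_P\,G_P[n]$ for $P$ itself. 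Next I would reduce to prime powers: for distinct primes $\ell$ the fields $K(G[n],P_{\ell^{v_\ell(n)}})$ have degrees over $K(G[n])$ that are powers of $\ell$, hence are linearly disjoint there, so $\mathrm{Im}(\rho_{P,n})$ is the product of the images of the $\rho_{P,\ell^{v_\ell(n)}}$; the defect introduced by restricting these to $\mathrm{Gal}(\overline K/K(G[n]))$ is an $\ell$-primary quantity that is trivial for all $\ell$ outside a finite set depending only on $G$ and $K$ and bounded for the rest. Finally, via $G_P=G_{P_T}\times G_{P_A}$ (again Lemma \ref{dec-end}) I would separate the toric and abelian coordinates of $P$; one cannot argue entirely independently because $K(G[n])=K(T[n])\cdot K(A[n])$, but the Kummer extensions of $K(G[n])$ of ``multiplicative'' and of ``abelian'' type are linearly disjoint up to bounded index, the only correlation being the one forced by the Weil pairing ($\det$ of the Galois action on $T_\ell A$ is a power of the cyclotomic character).

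It then remains to prove the lower bound for each factor. For $T=\mathbb G_m^t$ this is classical multiplicative Kummer theory: the index of the Kummer image in the relevant subgroup of $\mu_n^t$ is bounded independently of $n$, and once one works with a saturated group the bound depends only on $K$ and the (bounded) rank, which supplies the uniform part of $\Delta$. For $A$ this is the classical Kummer theory of abelian varieties: by Ribet's theorem and its uniform-in-$\ell$ refinements, the image of the $\ell$-adic Kummer map is open in the Tate module of $A_P^0$ with index bounded independently of $\ell$, whence $[A_P[n]:\mathrm{Im}(\rho_{P,n})]$ is bounded independently of $n$; working with the saturated Mordell--Weil group records the divisibility of $P$ in $b_P$, and the hypothesis that all geometric endomorphisms of $A$ are defined over $K$ keeps the remaining bound depending only on $A$ and $K$. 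Assembling these pieces, together with the torus--abelian disjointness and the prime-by-prime step, gives $\Delta$ depending only on $G$ and $K$. The step I expect to be the main obstacle is precisely the abelian input in this refined form: not merely $n$-uniformity of $[A_P[n]:\mathrm{Im}(\rho_{P,n})]$, which is classical, but the fact that its $P$-dependence is only through a divisor of a fixed multiple of $b_P$, and the correct amalgamation of this with the multiplicative estimate over the compositum $K(T[n])\cdot K(A[n])$ and over all primes; the upper inclusion, the finiteness of the quotient defining $b_P$, and the reductions are comparatively routine.
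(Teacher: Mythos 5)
Your treatment of the upper inclusion $\mathrm{Im}(\rho_{P,n})\subset G_P[n]$ and of the finiteness of $\left(G(K)\cap(\mathrm{End}_K(G)\cdot P)_{\mathrm{div}}\right)/\mathrm{End}_K(G)\cdot P$ is correct and matches the paper ($S$-units for the toric factor, Mordell--Weil for the abelian factor). The problem is the lower inclusion, where your proposal is a plan rather than a proof, and the plan's load-bearing step is exactly the one you flag as ``the main obstacle'': you defer to ``classical multiplicative Kummer theory'' and to ``Ribet's theorem and its uniform-in-$\ell$ refinements'' a bound of the form $[G_P[n]:\mathrm{Im}(\rho_{P,n})] \mid \Delta b_P$ with $\Delta=\Delta(G,K)$, i.e.\ with the entire $P$-dependence isolated in the saturation exponent $b_P$. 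That statement is the content of the theorem; the classical results give finiteness of the index, uniformly in $n$ for fixed $P$, but not this shape of $P$-dependence, so nothing has been proved. In addition, two of your reduction steps are asserted without argument and are themselves nontrivial: the claim that the multiplicative and abelian Kummer extensions of $K(G[n])$ are linearly disjoint up to an index bounded in terms of $G$ and $K$ (correlations between the two Kummer theories beyond the Weil pairing are precisely the subtle point in Bertrand's work on this subject), and the bookkeeping of the ``defect'' when restricting the prime-power Kummer maps to $\mathrm{Gal}(\overline{K}/K(G[n]))$.

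The paper's proof runs on entirely different machinery and never decomposes by primes or by factors. It sets $M_P=\mathrm{Im}(\rho_{P,n})\subset G[n]$, notes that $M_P$ is stable under $\mathrm{Gal}(\overline{K}/K)$, so that $\alpha_P\colon G\to G/M_P$ is a $K$-isogeny, and invokes the finiteness of $K$-isomorphism classes of connected commutative algebraic groups $K$-isogenous to $G$ (Faltings, via \cite[Lemma A.4]{BHP}) to produce a $K$-isogeny $\alpha_P'\colon G/M_P\to G$ whose kernel has exponent dividing a constant $\Delta_2=\Delta_2(G,K)$; then $\alpha=\alpha_P'\circ\alpha_P\in\mathrm{End}_K(G)$ satisfies $\Delta_2\ker\alpha\subset M_P\subset\ker\alpha$. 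The second ingredient is the uniform annihilation $\Delta_1\,H^1\!\left(\mathrm{Gal}(K(G[n])/K),G[n]\right)=0$, obtained from Serre's homothety theorem (a central $\sigma$ acting as $m^c$ on $A[n]$ and, via the Weil pairing, as $m^{2c}$ on $T[n]$) together with Sah's lemma. Since $(\mathrm{res}\circ\delta)([\alpha(P)])=0$, this gives $\Delta_1\alpha(P)\in nG(K)$, the definition of $b_P$ upgrades this to $b_P\Delta_1\alpha(P)=n\beta(P)$ with $\beta\in\mathrm{End}_K(G)$, hence $G_P[n]\subset\ker(b_P\Delta_1\alpha)$ and $b_P\Delta_1\Delta_2\,G_P[n]\subset M_P$. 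If you want to salvage your factor-by-factor approach, you would need to supply, for each factor and for the compositum, precisely the cohomological or isogeny-theoretic input that replaces these two ingredients; as written, the proposal has a genuine gap at the heart of the argument.
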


We note that a statement similar to Theorem \ref{main-GR} is established in \cite[Proposition A.9]{BHP}, where the constant $b_P$ is not explicitly given. However, the precise dependence of $b_P$ on $P$, obtained using R\'emond's proof presented here, plays a crucial role in our proof of Theorem \ref{lem:arbitraryrank-var-ab} as it is essential to getting the uniformity of the constant $d$ when dealing with the toric component of $G$. We also remark that a similar result in the case where the smallest algebraic subgroup of $G$ containing $P$ is connected was already stated, for instance, in \cite[Theorem 1]{bertrand} with a less precise dependence of the constants (see also \cite[Theorem 5.2]{Ber11}).    
As already mentioned, the proof of Theorem \ref{main-GR} is presented in Appendix \ref{sec-proof-GR-main} and relies, amongst other arguments, on some results about Galois cohomology and on Faltings's finiteness theorem for isogeny classes.

\section{Proof of Theorem \ref{lem:arbitraryrank-var-ab}}\label{sec-end-of-proof}
We recall that $\overline{K}$ denotes a fixed algebraic closure of $K$. We denote $\mathbb{G}_{m,K}$ by $\mathbb{G}_m$ to enhance readability. We further recall that we use the additive notation also for the group law on $\mathbb{G}_m$.
Let $P \in G(K(\Gamma_{\mathrm{sat}})) \cap G(K)_{\mathrm{div}}$, then
\[ P = (P_T,P_A) \]
with $P_T = (P_{T,1},\hdots,P_{T,t}) \in \mathbb{G}_m^{t}(K(\Gamma_{\mathrm{sat}})) \cap \mathbb{G}_m^{t}(K)_{\mathrm{div}}$ and $P_A \in A(K(\Gamma_{\mathrm{sat}})) \cap A(K)_{\mathrm{div}}$.

Recall that by Lemma \ref{lem-decomp-Gamma-sat} we have
\begin{equation}\label{eq:decomp-gamma}
\Gamma_{\mathrm{sat}} = (\Gamma_{T})_{\mathrm{sat}} \times (\Gamma_A)_{\mathrm{sat}} = (\Gamma_{\mathbb{G}_m})^{t}_{\mathrm{sat}} \times (\Gamma_A)_{\mathrm{sat}},
\end{equation}
where, for $H \in \{\mathbb{G}_m,T,A\}$, $\Gamma_{H}$ is a finitely generated subgroup of $H(K)$.

Hence, to prove Theorem \ref{lem:arbitraryrank-var-ab},
we have to show that there exists an integer $d=d(G,\Gamma,K)\geq 1$ such that
\begin{equation}\label{goal-torus}
dP_{T}  \in  T(K)+(\Gamma_{T})_{\mathrm{sat}}
\end{equation}
and \begin{equation}\label{goal-abelian}
 dP_A \in A(K)+(\Gamma_A)_{\mathrm{sat}}.\end{equation}

\subsection{Expressing $P$ via division points} \label{subsec:7.3}
We now want to fix points in $\mathbb{G}_m(K)$ and $A(K)$ related to $(\Gamma_{\mathbb{G}_m})_{\mathrm{sat}}$ and $(\Gamma_A)_{\mathrm{sat}}$ and use them to express the coordinates of $P_T$ and the point $P_A$, respectively.

For the toric part, if we denote by $r_T$ the rank of $\Gamma_{\mathbb{G}_m}$, by
 Lemma \ref{lem:independent-generators-tor}, we find an integer  $s_T\in \{r_T,\ldots,r_T+t\}$ and $s_T$ $\mathbb{Z}$-linearly independent points
$\widehat{P} _{T,1}, \hdots, \widehat{P}_{T,r_T}, \widehat{P}_{T,r_T+1}, \hdots,  \widehat{P}_{T,s_T} \in \mathbb{G}_m(K)$
such that
    \begin{equation}\label{eq:generators-gamma-sat-tor}
        (\Gamma_{\mathbb{G}_m})_{\mathrm{sat}} = \langle \widehat{P}_{T,1}, \hdots,  \widehat{P}_{T,r_T}\rangle_{\mathrm{div}},
        \end{equation}

    \begin{equation}\label{eq:generators-division-group-tor}
    \langle \Gamma_{\mathbb{G}_m}, P_{T,1},\hdots,P_{T,t}\rangle_{\mathrm{sat}} = \langle \widehat{P}_{T,1}, \hdots, \widehat{P}_{T,s_T} \rangle_{\mathrm{div}},
    \end{equation}
     and the exponent of the group 
       \begin{equation}\label{eq:quotient-of-division-group-tor}
       \left(\mathbb{G}_m^{s_T}(K) \cap (\mathrm{End}(\mathbb{G}_m^{s_T}) \cdot (\widehat{P}_{T,1},\hdots,\widehat{P}_{T,s_T}))_{\mathrm{div}}\right) / \mathrm{End}(\mathbb{G}_m^{s_T}) \cdot (\widehat{P}_{T,1},\hdots,\widehat{P}_{T,s_T})
       \end{equation}
    is bounded from above by a constant $B = B(K)$.

For the abelian part, we simply choose points
$\widehat{Q} _{A,1}, \hdots, \widehat{Q}_{A,r_A}, \widehat{Q}_{A,r_A+1}, \hdots,  \widehat{Q}_{A,s_A} \in A(K)$
such that $s_A \geq 1$,
    \begin{equation}\label{eq:generators-gamma-sat}
        (\Gamma_A)_{\mathrm{sat}} \cap A(K) = \sum_{i=1}^{r_A}{\mathbb{Z} \cdot \widehat{Q}_{A,i}},
        \end{equation}
    and
    \begin{equation}\label{eq:generators-division-group}
    A(K) = \sum_{i=1}^{s_A}{\mathbb{Z}\cdot \widehat{Q}_{A,i}}.
    \end{equation}
We set
\[    \widehat{P}_A = (\widehat{Q} _{A,1}, \hdots, \widehat{Q}_{A,s_A})\in A^{s_A}(K).\]

The goal of the remaining part of this section is to prove the following claim.
\begin{claim}\label{claim1}
There exists an integer $n \geq 1$ such that, for all $j$, one has
\begin{equation}\label{P_i,j-in-K-delta-with-n}
     P_{T,j} \in \mathbb{G}_m(K(\Delta_{A,n},\Delta_{T,n})) \quad \text{and} \quad P_A \in A(K(\Delta_{A,n},\Delta_{T,n}))
 \end{equation}
 as well as 
 \begin{equation}\label{shape-Pij}
 P_{T,j} = \sum_{i=1}^{s_T}{a_{T,j,i}\widehat{P}_{T,i,n}} + Q_{T,j} \quad \text{and} \quad P_{A} = f_A(\widehat{P}_{A,n}) + Q_{A}\end{equation}
 where, for $m \in \mathbb{N} \backslash \{0\}$, $\Delta_{T,m}$ and $\Delta_{A,m}$ are defined as 
\[ \Delta_{T,m} = \{Q \in \mathbb{G}_m(\overline{K})\mid mQ \in \langle \widehat{P}_{T,1}, \hdots, \widehat{P}_{T,r_T} \rangle\}\]
and
\[ \Delta_{A,m} = \{Q \in A(\overline{K})\mid mQ \in \langle \widehat{Q}_{A,1}, \hdots, \widehat{Q}_{A,r_A} \rangle\}\]
and where, for each $1\leq i\leq s_{T}$, $a_{T,j,i} \in \mathbb{Z}$, $\widehat{P}_{T,i,n} \in \mathbb{G}_m(\overline{K})$ is a point such that \begin{equation}\label{n-division-torus}n\widehat{P}_{T,i,n} = \widehat{P}_{T,i},\end{equation}
$\widehat{P}_{A,n} \in A^{s_A}(\overline{K})$ is a point such that \begin{equation}\label{def-P-hat-A-n}
    n\widehat{P}_{A,n} = \widehat{P}_A,
\end{equation} $f_A: A^{s_A} \to A$ is a homomorphism of abelian varieties which is defined over $K$, and $Q_{T,j} \in \mathbb{G}_m(\overline{K})$ and $Q_A \in A(\overline{K})$ are torsion points of orders dividing $n$.
\end{claim}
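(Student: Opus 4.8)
The plan is to read the generating data assembled above as a concrete description of the field $K(\Gamma_{\mathrm{sat}})$ and then to single out one exponent $n$ by a finite-generation argument; the two displayed parts of the claim will then follow from two essentially independent computations, one about fields and one about algebra in $G(\overline{K})$.

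First I would pin down the field. Since $\mathrm{End}(\mathbb{G}_m) = \mathbb{Z}$, equation \eqref{eq:generators-gamma-sat-tor} reads $(\Gamma_{\mathbb{G}_m})_{\mathrm{sat}} = \langle \widehat{P}_{T,1},\ldots,\widehat{P}_{T,r_T}\rangle_{\mathrm{div}} = \bigcup_{m\geq 1}\Delta_{T,m}$. Since all geometric endomorphisms of $A$ are defined over $K$, we have $\mathrm{End}(A)\cdot\Gamma_A = \mathrm{End}_K(A)\cdot\Gamma_A \subset A(K)$, so that every element of $(\Gamma_A)_{\mathrm{sat}} = (\mathrm{End}(A)\cdot\Gamma_A)_{\mathrm{div}}$ has a positive multiple lying in $(\Gamma_A)_{\mathrm{sat}}\cap A(K)$; combining this with \eqref{eq:generators-gamma-sat} and Remark \ref{rmk:sat-div-sat} yields $(\Gamma_A)_{\mathrm{sat}} = \langle \widehat{Q}_{A,1},\ldots,\widehat{Q}_{A,r_A}\rangle_{\mathrm{div}} = \bigcup_{m\geq 1}\Delta_{A,m}$. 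Plugging these into the decomposition $\Gamma_{\mathrm{sat}} = (\Gamma_{\mathbb{G}_m})^{t}_{\mathrm{sat}}\times(\Gamma_A)_{\mathrm{sat}}$ of Lemma \ref{lem-decomp-Gamma-sat}, and noting that $\Delta_{T,m}\subset\Delta_{T,m'}$ and $\Delta_{A,m}\subset\Delta_{A,m'}$ whenever $m\mid m'$, I would obtain
\[ K(\Gamma_{\mathrm{sat}}) = \bigcup_{m\geq 1}K(\Delta_{T,m},\Delta_{A,m}).\]
As $P_{T,1},\ldots,P_{T,t}$ and $P_A$ lie in this directed union, there is therefore an $n_0 \geq 1$ with $P_{T,j}\in\mathbb{G}_m(K(\Delta_{T,n_0},\Delta_{A,n_0}))$ for all $j$ and $P_A\in A(K(\Delta_{T,n_0},\Delta_{A,n_0}))$.

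Next I would produce the two expressions. For each $j$, since $P_{T,j}$ is one of the points appearing on the left-hand side of \eqref{eq:generators-division-group-tor}, that equation gives $P_{T,j}\in\langle\widehat{P}_{T,1},\ldots,\widehat{P}_{T,s_T}\rangle_{\mathrm{div}}$, hence an integer $m_j\geq 1$ and integers $c_{j,1},\ldots,c_{j,s_T}$ with $m_jP_{T,j} = \sum_{i=1}^{s_T}c_{j,i}\widehat{P}_{T,i}$. Since $P_A\in A(K)_{\mathrm{div}}$ by hypothesis, \eqref{eq:generators-division-group} gives an integer $N\geq 1$ and integers $e_1,\ldots,e_{s_A}$ with $NP_A = \sum_{i=1}^{s_A}e_i\widehat{Q}_{A,i}$. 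I would then set $n = \mathrm{lcm}(n_0,m_1,\ldots,m_t,N)$, choose $\widehat{P}_{T,i,n}$ with $n\widehat{P}_{T,i,n} = \widehat{P}_{T,i}$ for each $i$ and $\widehat{P}_{A,n} = (\widehat{Q}_{A,1,n},\ldots,\widehat{Q}_{A,s_A,n})$ with $n\widehat{P}_{A,n} = \widehat{P}_A$, and put $a_{T,j,i} = (n/m_j)c_{j,i}\in\mathbb{Z}$ and $f_A = \sum_{i=1}^{s_A}[(n/N)e_i]\circ\mathrm{pr}_i\colon A^{s_A}\to A$ (where $(n/N)e_i\in\mathbb{Z}$ and $\mathrm{pr}_i$ is the $i$-th projection), which is a homomorphism of abelian varieties defined over $K$. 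A direct computation using $n\widehat{P}_{T,i,n} = \widehat{P}_{T,i}$ and $n\widehat{P}_{A,n} = \widehat{P}_A$ then gives $n\bigl(P_{T,j} - \sum_{i=1}^{s_T}a_{T,j,i}\widehat{P}_{T,i,n}\bigr) = 0$ and $n\bigl(P_A - f_A(\widehat{P}_{A,n})\bigr) = 0$, so that $Q_{T,j} := P_{T,j} - \sum_i a_{T,j,i}\widehat{P}_{T,i,n}$ and $Q_A := P_A - f_A(\widehat{P}_{A,n})$ are torsion of order dividing $n$; this establishes \eqref{shape-Pij}. Finally, \eqref{P_i,j-in-K-delta-with-n} holds because $n_0 \mid n$ forces $K(\Delta_{T,n_0},\Delta_{A,n_0})\subset K(\Delta_{T,n},\Delta_{A,n})$.

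I do not anticipate a genuine obstacle; the points requiring care (rather than ingenuity) are the following. The identification $(\Gamma_A)_{\mathrm{sat}} = \langle\widehat{Q}_{A,1},\ldots,\widehat{Q}_{A,r_A}\rangle_{\mathrm{div}}$ really does use the hypothesis that all geometric endomorphisms of $A$ are defined over $K$, without which $\mathrm{End}(A)\cdot\Gamma_A$ need not be contained in $A(K)$ and $(\Gamma_A)_{\mathrm{sat}}\cap A(K)$ need not recover $(\Gamma_A)_{\mathrm{sat}}$ after taking the division closure. Moreover, the field $K(\Delta_{T,n},\Delta_{A,n})$ only involves division points of $\widehat{P}_{T,1},\ldots,\widehat{P}_{T,r_T}$ and $\widehat{Q}_{A,1},\ldots,\widehat{Q}_{A,r_A}$, whereas \eqref{shape-Pij} involves division points of $\widehat{P}_{T,1},\ldots,\widehat{P}_{T,s_T}$; one should not expect the individual summands $a_{T,j,i}\widehat{P}_{T,i,n}$ with $i > r_T$ to lie in that field, only their sum does, which is exactly what \eqref{P_i,j-in-K-delta-with-n} asserts. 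The remainder is bookkeeping with least common multiples.
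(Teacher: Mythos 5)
Your proposal is correct and follows essentially the same route as the paper: identify $K(\Gamma_{\mathrm{sat}})$ with the (directed) compositum of the fields $K(\Delta_{T,m},\Delta_{A,m})$ — using the splitting of $\Gamma_{\mathrm{sat}}$, the equalities \eqref{eq:generators-gamma-sat-tor} and \eqref{eq:generators-gamma-sat}, and the hypothesis that all geometric endomorphisms of $A$ are defined over $K$ — then clear denominators coming from \eqref{eq:generators-division-group-tor} and \eqref{eq:generators-division-group} and take a common $n$ by lcm. The only differences are presentational (directed union versus compositum, and your explicit formulas for $a_{T,j,i}$ and $f_A$), so there is nothing to add.
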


\begin{proof}[Proof of Claim \ref{claim1}]
We first prove that there exists an  integer $n'\geq 1$  such that\begin{equation}\label{P_i,j-in-K-delta} P_{T,1}, \hdots, P_{T,t} \in \mathbb{G}_m(K(\Delta_{A,n'},\Delta_{T,n'})) \quad \text{and} \quad P_{A} \in A(K(\Delta_{A,n'},\Delta_{T,n'})).\end{equation}
Indeed, recalling that $P = (P_{T,1}, \hdots, P_{T,t},P_A) \in G(K(\Gamma_{\mathrm{sat}}))$, to prove \eqref{P_i,j-in-K-delta} we are going to show that 
\begin{equation}\label{K(Gamma_sat)}
K(\Gamma_{\mathrm{sat}}) = \prod_{m \in \mathbb{N} \backslash \{0\}}{K(\Delta_{A,m},\Delta_{T,m})}
\end{equation}
where we use the product symbol to denote the compositum of fields.
By \eqref{eq:decomp-gamma}, we have 
 \begin{equation*}
 K(\Gamma_{\mathrm{sat}}) = \prod_{H \in \{A,\mathbb{G}_m\}}{K((\Gamma_H)_{\mathrm{sat}})}.
 \end{equation*}
 It follows from \eqref{eq:generators-gamma-sat-tor} that \[K((\Gamma_{\mathbb{G}_m})_{\mathrm{sat}}) = \prod_{m \in \mathbb{N} \backslash \{0\}}{K(\Delta_{T,m})}.\] 
 Moreover, since every element of $\mathrm{End}(A)$ is defined over $K$, we have that
 \[(\Gamma_A)_{\mathrm{sat}} = ((\Gamma_A)_{\mathrm{sat}} \cap A(K))_{\mathrm{div}}\]
 and so \eqref{eq:generators-gamma-sat} implies that
 \[K((\Gamma_A)_{\mathrm{sat}}) = \prod_{m \in \mathbb{N} \backslash \{0\}}{K(\Delta_{A,m})}.\]
 This establishes \eqref{K(Gamma_sat)}. We then deduce \eqref{P_i,j-in-K-delta} by noting that, given $m_1,\ldots,m_k\in \mathbb{N}\setminus \{0\}$ and setting $m=\mathrm{lcm}(m_1,\ldots,m_k)$, we have \[\prod_{i=1}^k K(\Delta_{A,m_i},\Delta_{T,m_i})\subset K(\Delta_{A,m},\Delta_{T,m}).\] 

Secondly, we now prove that there exists an integer $n''_T \geq 1$ such that, for all $j$, one has
\begin{equation}\label{eq-nP_i,j-sum-s_i}
n''_T P_{T,j} \in \sum_{i = 1}^{s_T}{\mathbb{Z} \cdot \widehat{P}_{T,i}}.\end{equation}
Indeed, this directly follows from \eqref{eq:generators-division-group-tor}.
Furthermore, it follows from \eqref{eq:generators-division-group} that
\[ n''_A P_A \in \sum_{i=1}^{s_A}{\mathbb{Z} \cdot \widehat{Q}_{A,i}}\]
for some integer $n''_A \geq 1$. We set $n'' = \mathrm{lcm}(n''_T,n''_A) \in \mathbb{Z}_{>0}$.

Thus, setting $n=\mathrm{lcm}(n',n'')$, we have that \eqref{P_i,j-in-K-delta-with-n} holds and 
 \begin{equation}\label{eq-nP_i,j-sum-s_i-with-n}
 n P_{T,j} \in \sum_{i = 1}^{s_T}{\mathbb{Z} \cdot \widehat{P}_{T,i}} \quad \text{and} \quad n P_A \in \sum_{i = 1}^{s_A}{\mathbb{Z} \cdot \widehat{Q}_{A,i}} 
 \end{equation}
 for all $j$. In particular, if  
$\widehat{P}_{T,i,n} \in \mathbb{G}_m(\overline{K})$ and $\widehat{P}_{A,n} \in A(\overline{K})$ are such that $n\widehat{P}_{T,i,n} = \widehat{P}_{T,i}$ and $n\widehat{P}_{A,n} = \widehat{P}_A$, by \eqref{eq-nP_i,j-sum-s_i-with-n},  there exist $a_{T,j,i} \in \mathbb{Z}$, a homomorphism of algebraic groups $f_A: A^{s_A} \to A$, sending a point to some fixed $\mathbb{Z}$-linear combination of its coordinates (hence in particular defined over $K$), and torsion points $Q_{T,j}$ and $Q_A$ of orders dividing $n$ such that \eqref{shape-Pij} is also satisfied.
\end{proof}
\subsection{Kummer theory and finale for the abelian part}
\label{subsec:7.4}
The goal of this subsection is to prove \eqref{goal-abelian}. To this aim, we want to apply Theorem \ref{main-GR} to the point
\[ P' = (\widehat{P}_{T},\widehat{P}_A)\]
in $G' = \mathbb{G}_m^{s^{\ast}_T} \times A^{s_A}$, where $s^{\ast}_T = \max\{s_T,1\}$ and
\[\widehat{P}_T = \begin{cases}
    0 &\text{ if } s_T = 0,\\
    (\widehat{P}_{T,1},\hdots,\widehat{P}_{T,s_T}) &\text{ if } s_T \geq 1.
\end{cases}\]
Let $G'_{P'}$, $B_A = (A^{s_A})_{\widehat{P}_A}$, and $B_T = (\mathbb{G}_m^{s^\ast_T})_{\widehat{P}_{T}}$ be as defined in \eqref{def-G_p}.
It then follows from the isomorphism $\mathrm{End}_K(G') \simeq \mathrm{End}_K(\mathbb{G}_m^{s^\ast_T}) \times \mathrm{End}_K(A^{s_A})$ from Lemma \ref{dec-end} that $G'_{P'} = B_T \times B_A$.

If $s_T \geq 1$, we deduce from the $\mathbb{Z}$-linear independence of the $\widehat{P}_{T,i}$ (as constructed in Subsection \ref{subsec:7.3}) and the isomorphism $\mathrm{End}(\mathbb{G}_m^{s_T}) \simeq \mathrm{Mat}_{s_T \times s_T}(\mathbb{Z})$ from Lemma \ref{dec-end} that $B_T = \mathbb{G}_m^{s_T}$. If $s_T = 0$, we use the convention that $\mathbb{G}_m^0 = \{0\} \subset \mathbb{G}_m$ so that in all cases we have 
\begin{equation}\label{ghat-phat}
G'_{P'} = \mathbb{G}_m^{s_T} \times B_A.
\end{equation}

By Theorem \ref{main-GR}, the group
\[ H = \left(G'(K)\cap \left(\mathrm{End}_K(G') \cdot P'\right)_{\mathrm{div}}\right)/\mathrm{End}_K(G') \cdot P'\]
is finite and we denote by $b_{P'}$ its exponent. By Lemma \ref{dec-end}, we have that
\[H \simeq H_T \times H_A\]
where
\[H_T = \left(\mathbb{G}_m^{s^\ast_T}(K)\cap \left(\mathrm{End}_K(\mathbb{G}_m^{s^\ast_T}) \cdot \widehat{P}_T\right)_{\mathrm{div}}\right)/\mathrm{End}_K(\mathbb{G}_m^{s^\ast_T}) \cdot \widehat{P}_T\]
and
\[H_A = \left(A^{s_A}(K)\cap \left(\mathrm{End}_K(A^{s_A}) \cdot \widehat{P}_A\right)_{\mathrm{div}}\right)/\mathrm{End}_K(A^{s_A}) \cdot \widehat{P}_A.\]

By \eqref{eq:quotient-of-division-group-tor}, the exponent of $H_T$ is bounded by a constant that depends only on $K$. Moreover, the exponent of $H_A$ depends only on $\widehat{P}_A$ and $K$, which in turn depend only on $A$, $\Gamma$, and $K$. This means that $b_{P'}$ is bounded by a constant that depends only on $G$, $\Gamma$, and $K$.

Let $n\geq 1$ be the integer from Claim \ref{claim1}. 
It now follows from Theorem \ref{main-GR} and \eqref{ghat-phat} that there exists a positive integer \begin{equation}\label{eq:d_1}d_1 = d_1(s_A,s_T,G,\Gamma,K) = d_1(G,\Gamma,K)\end{equation} such that, for every $R = (R_T,R_A) \in (\mathbb{G}_m^{s_T} \times B_A)[n]$, there exists $\sigma_R \in \mathrm{Gal}(\overline{K}/K(G'[n]))$ such that
\begin{equation} \label{eq:kummer-application-torus} \sigma_R(\widehat{P}_{T,1,n}, \hdots, \widehat{P}_{T,s_T,n}) - (\widehat{P}_{T,1,n}, \hdots, \widehat{P}_{T,s_T,n}) = d_1R_T
\end{equation}
and \begin{equation} \label{eq:kummer-application-abelian} \sigma_R(\widehat{P}_{A,n}) - \widehat{P}_{A,n} = d_1R_A
\end{equation}
where the points $\widehat{P}_{T,i, n}$ and $\widehat{P}_{A,n}$ are defined in  \eqref{n-division-torus}  and \eqref{def-P-hat-A-n}, with the convention that
\[(\widehat{P}_{T,1,n}, \hdots, \widehat{P}_{T,0,n}) = 0.\]
Since $s_A \geq 1$ and $s^{\ast}_T \geq 1$, we have that $K(G'[n]) \supset K(G[n])$ and so
\begin{equation}\label{sigma_R-fixes-G[n]}
    \sigma_R \in \mathrm{Gal}(\overline{K}/K(G[n])).
\end{equation}

We now want to find ``many" points $R$ such that $\sigma_R(P) = P$. For this, we let $\pi: B_A \to A^{r_A}$ denote the projection on the first $r_A$ factors and 
we set $B_A'$ to be the identity component of the algebraic group
\[ B_A \cap \left(\{0\}^{r_A} \times A^{s_A-r_A}\right) = \ker \pi.\]
We also set    
\[ B_T' = \{0\}^{r_T} \times \mathbb{G}_m^{s_T-r_T} \subset \mathbb{G}_m^{s_T}\]
and we choose $R \in (B_T' \times B_A')[n]$. Because of \eqref{eq:kummer-application-torus}, \eqref{eq:kummer-application-abelian}, and \eqref{sigma_R-fixes-G[n]}, this implies that $\sigma_R$ fixes $\Delta_{T,n}$ as well as $\Delta_{A,n}$ (as defined in Claim \ref{claim1}) pointwise and hence, by \eqref{P_i,j-in-K-delta-with-n}, $\sigma_R(P) = P$.

Recall that, by \eqref{shape-Pij},
 \begin{equation}\label{shape-P_A-2}P_{A} = f_A(\widehat{P}_{A,n}) + Q_{A}\end{equation} where $f_A:A^{s_A}\rightarrow A$ is a homomorphism of abelian varieties which is defined over $K$ and $Q_{A}\in A(\overline{K})$ is a torsion point of order dividing $n$.
 
 Therefore, we deduce that
\[0 = \sigma_R(P_A) - P_A = f_A(\sigma_R(\widehat{P}_{A,n})-\widehat{P}_{A,n}) = f_A(d_1R_A).\]
Since $R_A\in B_A'[n]$ was arbitrary, we have that $d_1B_A'[n] \subset \ker f_A$. By \cite[Theorem 5.13]{Milne}, this implies that \begin{equation}\label{d_1(f_A)}
    d_1(f_A)|_{B_A'} = nf_A'
\end{equation} for some homomorphism of algebraic groups $f_A': B_A' \to A$, defined over $K$.

To finish the proof of \eqref{goal-abelian}, we need to prove the following claim.
\begin{claim}\label{claim2}
 There exist an integer $N \geq 1$ and a homomorphism of algebraic groups $\psi: A^{r_A} \to B_A$, defined over $K$ and depending only on $B_A$ and $\pi$, such that, for every point $Q\in B_A(\overline{K})$,
\begin{equation}\label{eq:image-difference}
    NQ - (\psi \circ \pi)(Q) \in B_A'(\overline{K}).
\end{equation}

\end{claim}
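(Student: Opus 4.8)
The plan is to deduce Claim \ref{claim2} almost directly from Lemma \ref{lem:section-homomorphism}, applied to the projection $\pi \colon B_A \to A^{r_A}$, and then to correct for the possible non-connectedness of $\ker \pi$ by a multiplication-by-an-integer trick.

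First I would invoke Lemma \ref{lem:section-homomorphism} with $f = \pi$, $B = B_A$, $B' = A^{r_A}$, and ground field $L = K$; here $B_A$ is defined over $K$ because it is an intersection of kernels of $K$-endomorphisms of $A^{s_A}$ as in \eqref{def-G_p}, and $\pi$ is obviously defined over $K$. This produces an integer $N_0 \geq 1$ and a homomorphism of abelian varieties $\psi_0 \colon A^{r_A} \to B_A$, defined over $K$, with
\[\pi \circ \psi_0 \circ \pi = \pi \circ [N_0] = [N_0] \circ \pi.\]
Both $N_0$ and $\psi_0$ depend only on $\pi$ and the ambient abelian varieties, hence only on $B_A$ and $\pi$.

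Next, for an arbitrary $Q \in B_A(\overline{K})$, I would apply $\pi$ to $N_0 Q - (\psi_0 \circ \pi)(Q)$: using the displayed identity, $\pi\big(N_0 Q - (\psi_0 \circ \pi)(Q)\big) = [N_0]\pi(Q) - (\pi \circ \psi_0 \circ \pi)(Q) = 0$, so this point lies in $(\ker \pi)(\overline{K})$. Since $B_A'$ is by definition the identity component of $\ker \pi$, the quotient $(\ker \pi)(\overline{K})/B_A'(\overline{K})$ is finite; let $e \geq 1$ be its exponent, which again depends only on $B_A$ and $\pi$. I would then set $N = e N_0$ and $\psi = [e] \circ \psi_0$, both depending only on $B_A$ and $\pi$, and $\psi$ defined over $K$. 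For every $Q \in B_A(\overline{K})$ one has
\[NQ - (\psi \circ \pi)(Q) = e\left(N_0 Q - (\psi_0 \circ \pi)(Q)\right),\]
which lies in $B_A'(\overline{K})$ because $e$ annihilates $(\ker \pi)(\overline{K})/B_A'(\overline{K})$. This is exactly \eqref{eq:image-difference}.

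There is no serious obstacle: Lemma \ref{lem:section-homomorphism} already supplies the quasi-section $\psi_0$ together with its rationality over $K$, and the only point requiring care is that $\ker \pi$ need not be connected — which is precisely why one passes from $\psi_0$, $N_0$ to $\psi = [e]\circ\psi_0$, $N = eN_0$, so as to land inside the identity component $B_A'$ rather than merely inside $\ker\pi$.
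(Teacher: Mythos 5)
Your overall strategy is the same as the paper's (apply Lemma \ref{lem:section-homomorphism} to $\pi$, then kill the component group of $\ker\pi$ by multiplying by an integer), but there is one genuine gap: you apply Lemma \ref{lem:section-homomorphism} with $B = B_A$, and that lemma is stated (and proved, via Poincar\'e reducibility) only for homomorphisms of \emph{abelian varieties}. The group $B_A = \bigcap_{\varphi(\widehat{P}_A)=0}\ker\varphi$ is an intersection of kernels of endomorphisms of $A^{s_A}$ and need not be connected --- for instance, if some coordinate of $\widehat{P}_A$ is a torsion point of order $n$, the corresponding multiplication-by-$n$ map contributes a finite (hence disconnected) kernel to the intersection. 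So $\pi\colon B_A \to A^{r_A}$ is not literally a homomorphism of abelian varieties, and the lemma does not directly produce your $\psi_0$ and $N_0$; moreover, even after restricting to the identity component $B_A^\circ$, the identity $\pi\circ\psi_0\circ\pi = [N_0]\circ\pi$ you obtain is a priori only valid on $B_A^\circ$, whereas you use it for an arbitrary $Q \in B_A(\overline{K})$.

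The repair is exactly the trick you already use for $\ker\pi$, applied once more at the outset: choose $N_1 \geq 1$ with $[N_1](B_A) = B_A^\circ$, apply Lemma \ref{lem:section-homomorphism} to $\pi|_{B_A^\circ}\colon B_A^\circ \to A^{r_A}$ to get $\widetilde{\psi}$ and $N_2$, and set $\psi_0 = N_1\widetilde{\psi}$, $N_0 = N_1N_2$; then $\pi\circ\psi_0\circ\pi = [N_0]\circ\pi$ holds on all of $B_A$ because $[N_1]$ maps $B_A$ into $B_A^\circ$. This is precisely what the paper does (its $N_1, N_2, N_3$ correspond to your missing connectedness step, your $N_0$, and your $e$). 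With that insertion, the rest of your argument --- showing $N_0Q - (\psi_0\circ\pi)(Q) \in (\ker\pi)(\overline{K})$ and then multiplying by the exponent $e$ of the finite group $(\ker\pi)(\overline{K})/B_A'(\overline{K})$ --- is correct and matches the paper.
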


\begin{proof}[Proof of Claim \ref{claim2}]
For every $m \in \mathbb{Z}$, we denote by $[m]$ the multiplication-by-$m$ endomorphism on the appropriate algebraic group. 

Let $N_1 \geq 1$ be an integer such that $[N_1](B_A)$ is equal to the identity component $B_A^\circ$ of $B_A$. It follows from Lemma \ref{lem:section-homomorphism} that there exist an integer $N_2 \geq 1$ and a homomorphism of algebraic groups $\widetilde{\psi}: A^{r_A} \to B_A^\circ$, defined over $K$, such that
\[(\pi \circ \widetilde{\psi} \circ \pi)|_{B_A^\circ} = ([N_2] \circ \pi)|_{B_A^\circ} = (\pi \circ [N_2])|_{B_A^\circ}.\]
 Setting $\psi' = N_1\widetilde{\psi}$ and $N' = N_1N_2$, we deduce that
\begin{equation}\label{eq:section}
\pi \circ \psi' \circ \pi = \pi \circ \widetilde{\psi} \circ \pi \circ [N_1] = [N_2] \circ \pi \circ [N_1] = [N'] \circ \pi = \pi \circ [N'].
\end{equation}

Let $N_3 \geq 1$ be an integer such that $[N_3](\ker \pi) = B_A'$. Setting $\psi = N_3\psi'$ and $N = N_3N'$, we deduce from \eqref{eq:section} that
\eqref{eq:image-difference} holds as desired.
\end{proof}

Let $N$ and $\psi$ be as in Claim \ref{claim2}. We deduce from \eqref{shape-P_A-2}, multiplying by $nd_1N$ and adding and subtracting $d_1f_A((\psi \circ \pi)(\widehat{P}_{A}))$, that
\[nd_1NP_A = d_1f_A((\psi \circ \pi)(\widehat{P}_{A}))  + d_1f_A(N\widehat{P}_{A}-(\psi \circ \pi)(\widehat{P}_{A})).\]
Notice that by \eqref{eq:generators-gamma-sat}
\[ d_1f_A((\psi \circ \pi)(\widehat{P}_{A})) = \sum_{i=1}^{r_A}{f_i(\widehat{Q}_{A,i})} \in \langle \widehat{Q}_{A,1}, \hdots, \widehat{Q}_{A,r_A} \rangle_{\mathrm{sat}} \subset ((\Gamma_A)_{\mathrm{sat}})_{\mathrm{sat}} = (\Gamma_A)_{\mathrm{sat}},\]
where $f_i \in \mathrm{End}_K(A)$ sends a point $Q$ to
\[ d_1(f_A \circ \psi)(0, \hdots, 0, \underbrace{Q}_{\text{position } i}, 0, \hdots,0).\]

Moreover $N\widehat{P}_{A}-(\psi \circ \pi)(\widehat{P}_{A}) \in B_A'(\overline{K})$ because of \eqref{eq:image-difference}, so we have that
\[ d_1f_A(N\widehat{P}_{A}-(\psi \circ \pi)(\widehat{P}_{A})) \stackrel{\eqref{d_1(f_A)}}{=} nf_A'(N\widehat{P}_A - (\psi \circ \pi)(\widehat{P}_{A})) \in nA(K).\]
Thus $nd_1NP_A \in (\Gamma_A)_{\mathrm{sat}}+nA(K)$ and so $d_1NP_A \in (\Gamma_A)_{\mathrm{sat}}+A(K)$, which establishes \eqref{goal-abelian} with $d = d_1N$.

\subsection{Finale for the toric part}
In this subsection, we want to conclude the proof also for the toric part by proving \eqref{goal-torus}. If $s_T = 0$, then $P_T$ is a torsion point and \eqref{goal-torus} holds trivially (for any $d \geq 1$). Therefore, we assume from now on that $s_T \geq 1$.

We now establish the following claim, from which we then deduce \eqref{goal-torus}.
\begin{claim}\label{claim3}
For every $r_T+1\leq i\leq s_T$ and every $1 \leq j \leq t$, there exists $a'_{T,j,i}\in \mathbb{Z}$ such that in \eqref{shape-Pij} one has 
\begin{equation}\label{shape-endom}
    a_{T,j,i}=\frac{n}{\gcd(n,d_1)}a'_{T,j,i},
\end{equation}
where $n$ is the integer satisfying \eqref{P_i,j-in-K-delta-with-n} and \eqref{shape-Pij} which was constructed in Subsection \ref{subsec:7.3} and $d_1$ is the integer from \eqref{eq:d_1}.
\end{claim}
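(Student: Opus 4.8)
The plan is to exploit, once more, the Galois automorphisms $\sigma_R$ furnished by Theorem \ref{main-GR} in Subsection \ref{subsec:7.4}, this time choosing $R$ so as to isolate a single coordinate of the tuple of division points $(\widehat{P}_{T,1,n},\hdots,\widehat{P}_{T,s_T,n})$. Fix $j \in \{1,\hdots,t\}$ and an index $i_0 \in \{r_T+1,\hdots,s_T\}$. Since \eqref{shape-endom} is equivalent to the divisibility $\frac{n}{\gcd(n,d_1)} \mid a_{T,j,i_0}$ (one then simply sets $a'_{T,j,i_0} = \frac{\gcd(n,d_1)}{n}a_{T,j,i_0}$), it is enough to establish this divisibility.

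First I would pick a generator $\zeta$ of the cyclic group $\mathbb{G}_m[n] \cong \mathbb{Z}/n\mathbb{Z}$ and form the point $R_T \in \mathbb{G}_m^{s_T}[n]$ whose $i_0$-th coordinate equals $\zeta$ and whose remaining coordinates vanish. Because $i_0 > r_T$, we have $R_T \in B_T'[n]$, so $R := (R_T,0) \in (B_T' \times B_A')[n] \subset (\mathbb{G}_m^{s_T} \times B_A)[n]$; hence the associated automorphism $\sigma_R \in \mathrm{Gal}(\overline{K}/K(G[n]))$ coming from \eqref{eq:kummer-application-torus}, \eqref{eq:kummer-application-abelian}, and \eqref{sigma_R-fixes-G[n]} satisfies $\sigma_R(P) = P$, exactly as argued in Subsection \ref{subsec:7.4}. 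In particular $\sigma_R(P_{T,j}) = P_{T,j}$; moreover $\sigma_R$ fixes $Q_{T,j}$, since $Q_{T,j}$ is a torsion point of order dividing $n$ and $\sigma_R$ fixes $\mathbb{G}_m[n]$.

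Next I would apply $\sigma_R$ to the identity $P_{T,j} = \sum_{i=1}^{s_T} a_{T,j,i}\widehat{P}_{T,i,n} + Q_{T,j}$ from \eqref{shape-Pij}. Reading \eqref{eq:kummer-application-torus} coordinatewise gives $\sigma_R(\widehat{P}_{T,i,n}) - \widehat{P}_{T,i,n} = d_1 (R_T)_i$, the $i$-th coordinate of $d_1R_T$, which vanishes for $i \neq i_0$ and equals $d_1\zeta$ for $i = i_0$. Subtracting the original expression for $P_{T,j}$ from its image under $\sigma_R$ therefore yields
\[ 0 \;=\; \sigma_R(P_{T,j}) - P_{T,j} \;=\; \sum_{i=1}^{s_T} a_{T,j,i}\, d_1\, (R_T)_i \;=\; a_{T,j,i_0}\, d_1\, \zeta \]
in $\mathbb{G}_m$. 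As $\zeta$ has exact order $n$, this says $n \mid d_1 a_{T,j,i_0}$. Writing $g = \gcd(n,d_1)$, we obtain $\frac{n}{g} \mid \frac{d_1}{g}\, a_{T,j,i_0}$, and since $\gcd(\frac{n}{g}, \frac{d_1}{g}) = 1$ this forces $\frac{n}{g} \mid a_{T,j,i_0}$, which is the required divisibility; setting $a'_{T,j,i_0} = \frac{g}{n}a_{T,j,i_0}\in\mathbb{Z}$ then gives \eqref{shape-endom}.

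I do not anticipate a real obstacle here: all the substance is already contained in the Kummer-theoretic input of Subsection \ref{subsec:7.4} and the choice of division points in Subsection \ref{subsec:7.3}. The only points needing care are (i) that the chosen $R$ lies in $(B_T' \times B_A')[n]$ — which is exactly why the index must satisfy $i_0 > r_T$ — so that $\sigma_R$ fixes $P$ and hence $P_{T,j}$, and (ii) that $Q_{T,j}$ has order dividing $n$, so that it is not moved by $\sigma_R$; both facts are already recorded in Claim \ref{claim1} and the discussion of Subsection \ref{subsec:7.4}.
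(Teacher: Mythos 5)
Your argument is correct and follows essentially the same route as the paper: both apply the automorphisms $\sigma_R$ from Subsection \ref{subsec:7.4} with $R$ supported in $B_T'[n]$, use $\sigma_R(P)=P$ to get $0=\sum_{i=r_T+1}^{s_T} d_1 a_{T,j,i}(R_T)_i$, and then specialise to an element of exact order $n$ in the $i_0$-th coordinate to conclude $n\mid d_1a_{T,j,i_0}$ and hence $\frac{n}{\gcd(n,d_1)}\mid a_{T,j,i_0}$. The only cosmetic difference is that you fix the single-coordinate choice of $R_T$ from the outset, whereas the paper first writes the relation for a general $R_T\in B_T'[n]$ and then specialises.
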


\begin{proof}[Proof of Claim \ref{claim3}]
Let $R_T = (0, \hdots, 0, R_{T,r_T+1}, \hdots, R_{T,s_T}) \in B_T'[n]$ be a torsion point, $R_A = 0$, and $R = (R_T,R_A)$ and let $\sigma_{R} \in \mathrm{Gal}(\overline{K}/K(G[n]))$ be a Galois automorphism satisfying \eqref{eq:kummer-application-torus} and \eqref{eq:kummer-application-abelian}.

Fix $j \in \{1,\hdots,t\}$. As in Subsection \ref{subsec:7.4}, we find that $\sigma_R(P) = P$ and so, by \eqref{shape-Pij},
\[0 = \sigma_R(P_{T,j}) - P_{T,j}= \sum_{i=r_T+1}^{s_T}{d_1a_{T,j,i}R_{T,i}}.\]

For $i_0 \in \{r_T+1, \hdots, s_T\}$, we can take $R_{T,i_0}$ of exact order $n$ and $R_{T,i} = 0$ for $i \neq i_0$ to find that $n$ divides $d_1a_{T,j,i_0}$ and so $\frac{n}{\gcd(n,d_1)}$ divides $a_{T,j,i_0}$, which completes the proof of the claim.
\end{proof}
Let now $d=d_1N$ as in Subsection \ref{subsec:7.4}, where $N$ is the integer from Claim \ref{claim2}.
We obtain from \eqref{shape-Pij} together with \eqref{n-division-torus} and Claim \ref{claim3} that for all $j$
\[dP_{T,j} = d\sum_{i=1}^{r_T}{a_{T,j,i}\widehat{P}_{T,i,n}}+\frac{d}{\gcd(n,d_1)}\sum_{i=r_T+1}^{s_T}{a'_{T,j,i}\widehat{P}_{T,i}}+dQ_{T,j} .\]
Notice that \[n\left(d\sum_{i=1}^{r_T}{a_{T,j,i}\widehat{P}_{T,i,n}}\right)=\sum_{i=1}^{r_T}{da_{T,j,i}\widehat{P}_{T,i}}.\]
Hence, by \eqref{eq:generators-gamma-sat-tor},
we have \[d\sum_{i=1}^{r_T}{a_{T,j,i}\widehat{P}_{T,i,n}}\in \left(\sum_{i = 1}^{r_T}{\mathbb{Z}\cdot \widehat{P}_{T,i}}\right)_{\mathrm{div}}
=\left(\Gamma_{\mathbb{G}_m}\right)_{\mathrm{sat}}.\]
Note also that \[\frac{d}{\gcd(n,d_1)}\sum_{i=r_T+1}^{s_T}{a'_{T,j,i}\widehat{P}_{T,i}} \in \mathbb{G}_m(K)\]
and that $dQ_{T,j}\in (\Gamma_{\mathbb{G}_m})_{\mathrm{sat}}$.
Hence $d P_{T,j}\in (\Gamma_{\mathbb{G}_m})_{\mathrm{sat}} + \mathbb{G}_m(K)$, which, by \eqref{eq:decomp-gamma}, establishes \eqref{goal-torus} and concludes the proof of Theorem \ref{lem:arbitraryrank-var-ab}.
\qed

\section*{Acknowledgements}

The authors are grateful to Ga\"el R\'emond for providing the proof of Theorem \ref{main-GR} which is reproduced in Appendix \ref{sec-proof-GR-main}, for his careful reading of a preliminary version of the manuscript, and for his suggestions that helped simplify and improve the exposition significantly, in particular with regard to Subsection \ref{subsec:7.4}. They also thank Philipp Habegger for raising an interesting question and Lukas Pottmeyer for comments on a preliminary version of this work. 

Sara Checcoli received support from the French National Research Agency in the framework of the Investissements d’avenir program (ANR-15-IDEX-02). 
Gabriel Dill thanks the DFG for its support (grant no. EXC-2047/1 - 390685813). He also thanks the Institut Fourier and Sara Checcoli for their hospitality on the occasion of his visit to Grenoble.

This article is based upon work supported by the National Science Foundation under Grant Nos. 1440140 and DMS--1928930 while the authors were in residence at the Mathematical Sciences Research Institute in Berkeley, California, during the Spring 2023 semester. We thank the organisers of the ``Diophantine Geometry'' program for giving us the opportunity to participate in this program
and the MSRI staff for providing an excellent environment for mathematical collaborations.

\begin{mdframed}
\begin{tabular*}{0.96\textwidth}{@{\extracolsep{\fill} }cp{0.84\textwidth}}
		\raisebox{-0.7\height}{%
    \begin{tikzpicture}[y=0.80pt, x=0.8pt, yscale=-1, inner sep=0pt, outer sep=0pt, 
				scale=0.12]
				\definecolor{c003399}{RGB}{0,51,153}
				\definecolor{cffcc00}{RGB}{255,204,0}
				\begin{scope}[shift={(0,-872.36218)}]
					\path[shift={(0,872.36218)},fill=c003399,nonzero rule] (0.0000,0.0000) rectangle (270.0000,180.0000);
					\foreach \myshift in 
					{(0,812.36218), (0,932.36218), 
						(60.0,872.36218), (-60.0,872.36218), 
						(30.0,820.36218), (-30.0,820.36218),
						(30.0,924.36218), (-30.0,924.36218),
						(-52.0,842.36218), (52.0,842.36218), 
						(52.0,902.36218), (-52.0,902.36218)}
					\path[shift=\myshift,fill=cffcc00,nonzero rule] (135.0000,80.0000) -- (137.2453,86.9096) -- (144.5106,86.9098) -- (138.6330,91.1804) -- (140.8778,98.0902) -- (135.0000,93.8200) -- (129.1222,98.0902) -- (131.3670,91.1804) -- (125.4894,86.9098) -- (132.7547,86.9096) -- cycle;
				\end{scope}
			\end{tikzpicture}%
		}
		&
		Gabriel Dill has received funding from the European Research Council (ERC) under the European Union's Horizon 2020 research and innovation programme (grant agreement n$^\circ$ 945714).
	\end{tabular*}
\end{mdframed}

\appendix

\section{Proof of Theorem \ref{main-GR}}\label{sec-proof-GR-main}

The proof of Theorem \ref{main-GR} presented in this appendix is due to Ga\"el R\'emond. We recall that, given a number field $K$, we denote by $\overline{K}$ a fixed algebraic closure of $K$.

We first prove the following lemma:
\begin{lemma}\label{quotient-finite-Remond}
    Let $G$ and $K$ be as in the statement of Theorem  \ref{main-GR}. Let $P\in G(K)$. Then the quotient group 
\[\left(G(K)\cap \left(\mathrm{End}_K(G) \cdot P\right)_{\mathrm{div}}\right)/\mathrm{End}_K(G) \cdot P\]
is finite.
\end{lemma}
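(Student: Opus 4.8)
The plan is to exploit the product structure $G = T\times A$ and reduce the claim to the fact that a certain subgroup of $G(K)$ is finitely generated, the only non-formal inputs being Dirichlet's $S$-unit theorem for the toric factor and the Mordell--Weil theorem for the abelian factor. Set $M := \mathrm{End}_K(G)\cdot P$ and $\Lambda := G(K)\cap M_{\mathrm{div}}$. Two observations cost nothing. First, by Lemma~\ref{dec-end} the ring $\mathrm{End}_K(G)\cong \mathrm{End}_K(T)\times \mathrm{End}_K(A)$ is a finitely generated abelian group --- it is $\mathrm{Mat}_{t\times t}(\mathbb{Z})$ times a subgroup of the finitely generated free $\mathbb{Z}$-module $\mathrm{End}(A_{\overline{K}})$ --- so $M$, being generated by the images of finitely many generators of $\mathrm{End}_K(G)$ under $\varphi\mapsto\varphi(P)$, is finitely generated. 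Second, every $Q\in\Lambda$ satisfies $nQ\in M$ for some $n\geq 1$, so $\Lambda/M$ is a torsion group. Hence it suffices to show that $\Lambda$ is finitely generated: then $\Lambda/M$ is finitely generated and torsion, thus finite.

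To show $\Lambda$ is finitely generated I would split everything over the product. Writing $P=(P_T,P_A)$, Lemma~\ref{dec-end} gives $M=M_T\times M_A$ with $M_T=\mathrm{End}_K(T)\cdot P_T\subset T(K)$ and $M_A=\mathrm{End}_K(A)\cdot P_A\subset A(K)$ (take one of the two endomorphisms trivial to see that both factors actually occur), and a short computation with least common multiples of denominators shows $M_{\mathrm{div}}=(M_T)_{\mathrm{div}}\times(M_A)_{\mathrm{div}}$. Therefore $\Lambda=\Lambda_T\times\Lambda_A$ with $\Lambda_T=T(K)\cap(M_T)_{\mathrm{div}}$ and $\Lambda_A=A(K)\cap(M_A)_{\mathrm{div}}$. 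The abelian factor is then immediate: $\Lambda_A$ is a subgroup of $A(K)$, which is finitely generated by the Mordell--Weil theorem.

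For the toric factor, which is the only place a genuine argument is needed, note that $T(K)=(K^\times)^t$ is itself not finitely generated, so one cannot proceed formally. Since $M_T$ is a finitely generated subgroup of $(K^\times)^t$, choose a finite set $S$ of places of $K$, containing all archimedean places, with $M_T\subset(\mathcal{O}_{K,S}^\times)^t$. If $Q=(q_1,\dots,q_t)\in\Lambda_T$, say with $(q_1^n,\dots,q_t^n)\in M_T$, then for every finite place $v\notin S$ one has $n\,v(q_i)=v(q_i^n)=0$, so $v(q_i)=0$; hence each $q_i$ is an $S$-unit and $\Lambda_T\subset(\mathcal{O}_{K,S}^\times)^t$, which is finitely generated by Dirichlet's $S$-unit theorem. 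Thus $\Lambda_T$ is finitely generated, $\Lambda=\Lambda_T\times\Lambda_A$ is finitely generated, and the proof concludes as above. The main (mild) obstacle is exactly this toric step --- recognising that one must momentarily leave the realm of finitely generated groups and bring in $S$-units --- whereas the abelian factor is handled outright by Mordell--Weil.
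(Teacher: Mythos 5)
Your proof is correct and follows essentially the same route as the paper's: both reduce to showing that $G(K)\cap(\mathrm{End}_K(G)\cdot P)_{\mathrm{div}}$ is finitely generated by trapping the toric coordinates inside $(\mathcal{O}_{K,S}^\times)^t$ (via the valuation argument) and invoking Dirichlet's unit theorem together with Mordell--Weil, after which the quotient is finitely generated and torsion, hence finite. The only cosmetic difference is that you split $M_{\mathrm{div}}$ as a full product while the paper merely establishes the containment in $(\mathcal{O}_{K,S}^\times)^t\times A(K)$; the substance is identical.
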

\begin{proof}
    Let $G=T\times A$ as in the statement of Theorem \ref{main-GR} with $T=\mathbb{G}_{m,K}^t$ for some $t\in \mathbb{Z}_{\geq 0}$. Set $P=(P_T,P_A)$ where $P_T\in T(K)$ and $P_A\in A(K)$. Let $S$ be a finite set of places of $K$ such that \[P_T\in \left(\mathcal{O}_{K,S}^{\times}\right)^t\] where $\mathcal{O}_{K,S}^{\times}$ is the unit group of the ring of $S$-integers of $K$. We want to show that \[G(K)\cap \left(\mathrm{End}_K(G) \cdot P\right)_{\mathrm{div}}\subset \left(\mathcal{O}_{K,S}^{\times}\right)^t\times A(K).\] Indeed, let $Q=(Q_T,Q_A)\in G(K)\cap \left(\mathrm{End}_K(G) \cdot P\right)_{\mathrm{div}}$. Then, recalling that we use the additive notation also for the group law on $T$ and recalling Lemma \ref{dec-end}(b), there exists $m \in \mathbb{Z}_{> 0}$ such that \[mQ_T \in \mathrm{End}_K(T) \cdot P_T = \mathrm{Mat}_{t \times t}(\mathbb{Z}) \cdot P_T \subset \left(\mathcal{O}_{K,S}^{\times}\right)^t,\] so $Q_T\in \left(\mathcal{O}_{K,S}^{\times}\right)^t$. This proves the claim, as $Q_A\in A(K)$ (since $Q\in G(K)$). 
    
    As, by Dirichlet's unit theorem and the Mordell-Weil theorem, the group $\left(\mathcal{O}_{K,S}^{\times}\right)^t\times A(K)$ is a finitely generated abelian group, so is the group $G(K)\cap \left(\mathrm{End}_K(G) \cdot P\right)_{\mathrm{div}}$. Now let $Q_1,\ldots,Q_r$ be a set of generators for $G(K)\cap \left(\mathrm{End}_K(G) \cdot P\right)_{\mathrm{div}}$. Then the quotient group \[\left(G(K)\cap \left(\mathrm{End}_K(G) \cdot P\right)_{\mathrm{div}}\right)/\mathrm{End}_K(G) \cdot P\] is generated by the classes $[Q_1],\ldots, [Q_r]$, which are all torsion elements (as $Q_i\in \left(\mathrm{End}_K(G) \cdot P\right)_{\mathrm{div}}$), hence it has finite order.
\end{proof}

The proof of Theorem \ref{main-GR} also requires the following two lemmas in Galois cohomology. 

\begin{lemma}\label{lemma1-GR} 
Let $G$ and $K$ be as in the statement of Theorem \ref{main-GR}.
    There exists an integer $\Delta_1\geq 1$, depending only on $G$ and $K$, such that, for all $n\in \mathbb{Z}_{> 0},$
    \[\Delta_1 H^1\left(\mathrm{Gal}(K(G[n])/K),G[n]\right)=0.\]
\end{lemma}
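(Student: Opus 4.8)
The plan is to decompose $G[n]$ into its $\ell$-primary parts, to use Sah's lemma in order to replace the cohomology of the Galois group by multiplication by a central homothety, and to bound the resulting inflation--restriction error terms via the finiteness of the torsion of $G$ over a fixed finite extension of $K$. Concretely, $G[n]=\mathbb{G}_m^{t}[n]\times A[n]$ decomposes Galois-equivariantly according to the primes dividing $n$, so $H^1(\Gal(K(G[n])/K),G[n])=\bigoplus_{\ell\mid n}H^1(\Gal(K(G[n])/K),G[\ell^{v_\ell(n)}])$ with the $\ell$-th summand an $\ell$-group; it thus suffices to annihilate each summand by a bounded power of $\ell$, equal to $1$ for all but finitely many $\ell$. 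Fixing $\ell$, writing $k=v_\ell(n)$ and $\mathcal{C}=\Gal(K(G[\ell^k])/K)$, the action on $G[\ell^k]$ factors through $\mathcal{C}$, and the inflation--restriction sequence for $N=\Gal(K(G[n])/K(G[\ell^k]))$ (which acts trivially on $G[\ell^k]$) embeds $H^1(\Gal(K(G[n])/K),G[\ell^k])$ into an extension of a subgroup of $\mathrm{Hom}(N,G[\ell^k])^{\mathcal{C}}$ by $H^1(\mathcal{C},G[\ell^k])$, so it is enough to bound both.

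For $H^1(\mathcal{C},G[\ell^k])$ I would use Sah's lemma: if $\sigma$ is central in a finite group $H$, then $\sigma-1$ annihilates $H^1(H,M)$ for every $H$-module $M$. By Bogomolov's theorem the image of $\Gal(\overline K/K)$ in $\mathrm{GL}(T_\ell A)$ is open, so it contains $1+\ell^{c}\mathbb{Z}_\ell$ among the scalar matrices; one chooses $\sigma\in\mathcal{C}$ acting on $A[\ell^k]$ as the scalar $\lambda=1+\ell^{c}$, with $c=c(A,K)$ that may be taken uniform in $\ell$ and zero for all but finitely many $\ell$. Such $\sigma$ is central in $\mathcal{C}$, and by the Weil pairing it acts on $\mathbb{G}_m^t[\ell^k]$ by the scalar $\chi_{\ell^k}(\sigma)=\lambda^2$ (up to a factor bounded in terms of a polarisation of $A$). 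Hence $\sigma-1$ acts on $G[\ell^k]$ by scalars of $\ell$-adic valuation $v_\ell(\lambda^2-1)$ and $v_\ell(\lambda-1)$, both equal to $c$ for $\ell$ odd, so a bounded power of $\ell$ annihilates $H^1(\mathcal{C},G[\ell^k])$ — trivially so for $\ell$ large.

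For the error term $\mathrm{Hom}(N,G[\ell^k])^{\mathcal{C}}$ I would note that the conjugation action of $\mathcal{C}$ on $N^{\mathrm{ab}}$ factors through $\Gal(L_0/K)$, where $L_0=K(G[\ell^k])\cap K(G[n/\ell^k])$ is contained in $L_\ell:=K(A[\ell^\infty])\cap K(A[(\text{prime to }\ell)^\infty])$, a fixed finite extension of $K$ that equals $K$ for all but finitely many $\ell$ (Serre's independence theorem for $\ell$-adic representations). Hence any $\mathcal{C}$-equivariant homomorphism $N\to G[\ell^k]$ takes values in $G[\ell^k]^{\Gal(K(G[\ell^k])/L_0)}=G(L_0)[\ell^k]\subset G(L_\ell)_{\mathrm{tor}}$, a finite group; so the error term is annihilated by $|G(L_\ell)_{\mathrm{tor}}|$, which equals $|G(K)_{\mathrm{tor}}|$ — in particular is coprime to $\ell$ — for $\ell$ outside a finite set. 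Collecting the finitely many nontrivial $\ell$-primary contributions yields $\Delta_1=\Delta_1(G,K)$.

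The step I expect to be the main obstacle is the uniformity in $\ell$ for the abelian factor: Bogomolov's theorem is a statement for each fixed $\ell$, whereas the argument needs the index of the homothety subgroup in $\mathbb{Z}_\ell^\times$ to be bounded independently of $\ell$ (so that $c=0$ for almost all $\ell$), together with the almost-disjointness of the fields $K(A[\ell^\infty])$ — genuinely arithmetic inputs rather than formal cohomological facts. Everything else — the splitting by primes, Sah's lemma, and the inflation--restriction bookkeeping — is routine once these are in place; alternatively, one may quote from the literature a ready-made uniform annihilation statement for $H^1(\Gal(K(A[n])/K),A[n])$ and combine it with the elementary cyclotomic computation governing the toric factor.
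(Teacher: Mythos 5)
Your strategy is sound, but it takes a genuinely different and heavier route than the paper's. The paper works with the full group $\mathrm{Gal}(K(G[n])/K)$ at once: it invokes Serre's uniform homothety theorem (cited as \cite[No.~136, Th\'eor\`eme 2']{Ser00} and \cite{Win}) to produce, for each $m$ coprime to $n\deg(\phi)$, a single element $\sigma$ acting as $m^c$ on $A[n\deg(\phi)]$ and hence (via the Weil pairing) as $m^{2c}$ on $\mu_n^t$, observes that $\sigma$ is central because the Galois image sits inside $\mathrm{Aut}(T[n])\times\mathrm{Aut}(A[n])$, applies Sah's lemma once to deduce $(m^{2c}-1)H^1=0$, and finishes with a gcd argument ($\gcd\{m^{2c}-1 \mid \gcd(m,n\deg\phi)=1\}$ is bounded because the exponent of $(\mathbb{Z}/d\mathbb{Z})^\times$ must divide $2c$). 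You instead split by primes and use inflation--restriction, which forces you to control an extra error term $\mathrm{Hom}(N,G[\ell^k])^{\mathcal{C}}$ via Serre's independence-of-$\ell$ theorem and the finiteness of $G(L_\ell)_{\mathrm{tor}}$ --- inputs the paper avoids entirely, since a central element of the \emph{full} group $\mathrm{Gal}(K(G[n])/K)$ (rather than of $\mathrm{Gal}(K(G[\ell^k])/K)$) makes the inflation--restriction bookkeeping unnecessary. The ``main obstacle'' you correctly flag is indeed the crux, and it is exactly the arithmetic input the paper quotes: Bogomolov alone does not suffice, but Serre/Wintenberger give a single exponent $c=c(A,K)$ valid for the whole adelic representation, which is what makes $\Delta_1$ independent of $n$.

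Two smaller points. First, your specific choice $\lambda=1+\ell^c$ would contribute a factor $\ell^{c}$ for \emph{every} prime $\ell\mid n$, and the product of these over $\ell\mid n$ is not bounded; what saves the argument is that for $\ell>2c+1$ one can instead choose $m$ with $m^{2c}\not\equiv 1\pmod{\ell}$, so that $\lambda^2-1$ is an $\ell$-adic unit and the $\ell$-primary contribution vanishes --- this is precisely what the paper's gcd computation formalises, and your remark ``trivially so for $\ell$ large'' needs this justification rather than ``$c=0$ for almost all $\ell$'' (index-boundedness of the homothety group does not give surjectivity onto $\mathbb{Z}_\ell^\times$ for almost all $\ell$). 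Second, when $\dim A=0$ the Weil-pairing step is vacuous and one must fall back on the elementary cyclotomic estimate, as you note at the end; the paper handles this case separately for the same reason.
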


\begin{proof}
Let $G=T\times A$ as in the statement of Theorem \ref{main-GR} so that $T=\mathbb{G}_{m,K}^t$ for some $t\in \mathbb{Z}_{\geq 0}$.
   Let $A^{\vee}$ be the dual of $A$ and fix a polarisation $\phi: A \to A^{\vee}$ defined over $K$. Let $n \in \mathbb{Z}_{>0}$, set $n' = n\deg(\phi)$, and let $m$ be an integer coprime to $n'$.

   We want to construct an element $\sigma$ in the center of $\mathrm{Gal}(K(G[n])/K)$ that acts on both $T[n]$ and $A[n]$ as multiplication by possibly different powers of $m$ whose exponents depend only on $A$ and $K$.
   
   Suppose first that $A[n]$ is non-trivial. By a theorem of Serre \cite[No. 136, Théorème 2']{Ser00} (see also \cite[Théorème 3]{Win}), there exists $\sigma\in \mathrm{Gal}(\overline{K}/K)$ acting on the abelian part $A[n']$ of $G[n']$ as the multiplication by $m^c$, for some integer $c>0$ which depends only on $A$ and $K$. We now show that $\sigma$ acts on the toric part $T[n]=\mu_n^t$ of $G[n]$ as multiplication by $m^{2c}$ (here, as usual, $\mu_n$ denotes the group of $n$-th roots of unity). To prove this, we reason similarly as in the proof of \cite[Lemme 12 (i)]{Hin88}. Let $e_n:A[n]\times A^{\vee}[n]\rightarrow \mu_n$ be the Weil pairing. Let $\zeta\in \mu_n$ and let $(P,P')\in A[n]\times A^{\vee}[n]$ be such that $\zeta=e_n(P,P')$. There exists $P'' \in A[n']$ such that $\phi(P'') = P'$. Then (recalling that we use the additive notation also for $T$)
   \begin{align*}\sigma(\zeta)&=e_n(\sigma(P),\sigma(P'))=e_n(\sigma(P),\phi(\sigma(P''))) = e_n(m^c P, \phi(m^c P'')) \\
   &= e_n(m^c P, m^c P') =m^{2c} e_n(P,P')=m^{2c} \zeta.
    \end{align*}

   If $A[n]$ is trivial, we now show that we can also find $\sigma$ acting on $T[n]$ as the multiplication by $m^{2c}$ for some integer $c > 0$ that depends only on $K$. Indeed, in this case $G[n]=\mu_n^t$ and the action is the usual Galois action on the group of roots of unity.

   In both cases, $\sigma$ induces an element of the group $\mathrm{Gal}(K(G[n])/K)$ which lies in the center because the canonical injective homomorphism $\mathrm{Gal}(K(G[n])/K) \hookrightarrow \mathrm{Aut}(G[n])$ factors through the image of the canonical injective homomorphism $\mathrm{Aut}(T[n]) \times \mathrm{Aut}(A[n]) \hookrightarrow \mathrm{Aut}(G[n])$.

   By Sah's lemma (see for instance \cite[Chapter 8, Lemma 8.1]{Lan83}), we have 
   \[(\sigma-1)H^1\left(\mathrm{Gal}(K(G[n])/K),G[n]\right)=0.\]
   This means that, for every $1$-cocycle $f: \mathrm{Gal}(K(G[n])/K) \to G[n] = T[n] \times A[n]$, the map $(\sigma-1)(f)$, $\tau \mapsto \sigma(f(\tau))-f(\tau)$, is a $1$-coboundary, so there exists $P = (P_T,P_A) \in G[n] = T[n] \times A[n]$ such that
   \[ (\sigma-1)(f)(\tau) = \tau(P)-P\]
   for all $\tau$. Since $\sigma$ acts as multiplication by $m^{2c}$ on $T[n]$ and as multiplication by $m^c$ on $A[n]$, one can check that $(m^{2c}-1)f$ is the $1$-coboundary associated with the point $(P_T,(m^c+1)P_A)$ and so
\[(m^{2c}-1)H^1\left(\mathrm{Gal}(K(G[n])/K),G[n]\right)=0.\]
Let 
\[d=\gcd\{m^{2c}-1\mid \gcd(m,n')=1\}.\]
We also have that
\[d H^1\left(\mathrm{Gal}(K(G[n])/K),G[n]\right)=0.\]
If $x\in \left(\mathbb{Z}/d\mathbb{Z}\right)^{\times}$, then there exists $m\in \mathbb{Z}$ coprime to $n'$ whose image in $\left(\mathbb{Z}/d\mathbb{Z}\right)^{\times}$ equals $x$. 
By definition, $d$ divides $m^{2c}-1$, hence $x^{2c}=1$ or, equivalently, the order of $x$ divides $2c$. Letting $x$ vary, we obtain that the exponent of the group $\left(\mathbb{Z}/d\mathbb{Z}\right)^{\times}$ divides $2c$. As this exponent tends to infinity with $d$, we deduce that $d$ is bounded by a function of $c$ and independently of $n$. Finally, the least common multiple $\Delta_1$ of all such possible $d$'s satisfies the statement of the lemma.  
\end{proof}

\begin{lemma}\label{lemma2-GR} Let $G$, $A$, $T$, and $K$ be as in the statement of Theorem \ref{main-GR}. For every point $P\in G(K)$, we fix a point $P_n\in G(\overline{K})$ such that $n P_n=P$.
Let \[\delta:{G(K)}/{nG(K)}\hookrightarrow H^1\left(\mathrm{Gal}(\overline{K}/K),G[n]\right)
\]
\[[P]\longmapsto [\delta_P]
\]
where $\delta_P(\sigma)=\sigma(P_n)-P_n$. Then $\delta$ is a well-defined group homomorphism.
Let also
\[\mathrm{res}: H^1\left(\mathrm{Gal}(\overline{K}/K),G[n]\right)\rightarrow H^1\left(\mathrm{Gal}(\overline{K}/K(G[n])), G[n]\right)\] 
be the natural restriction. 
Then 
\[H^1\left(\mathrm{Gal}(\overline{K}/K(G[n])), G[n]\right) = \mathrm{Hom}\left(\mathrm{Gal}(\overline{K}/K(G[n])), G[n]\right)\]
and the kernel of the homomorphism $\mathrm{res}\circ \delta$ consists of torsion elements of order dividing the integer $\Delta_1$ from Lemma \ref{lemma1-GR}.

\end{lemma}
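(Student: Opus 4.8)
The plan is to recognise $\delta$ as the connecting homomorphism in the long exact sequence of Galois cohomology attached to the Kummer sequence
\[ 0 \to G[n] \to G(\overline{K}) \xrightarrow{[n]} G(\overline{K}) \to 0, \]
which is exact because multiplication by $n$ is surjective on the $\overline{K}$-points of the connected commutative algebraic group $G$. First I would check directly the elementary properties of $\delta_P$: for $\sigma,\tau \in \mathrm{Gal}(\overline{K}/K)$ one has $\delta_P(\sigma\tau) = \sigma\tau(P_n) - P_n = \sigma(\tau(P_n)-P_n) + (\sigma(P_n)-P_n) = \sigma\cdot\delta_P(\tau) + \delta_P(\sigma)$, so $\delta_P$ is a (continuous, as it factors through a finite quotient) $1$-cocycle; replacing $P_n$ by $P_n + Q$ with $Q \in G[n]$ changes $\delta_P$ by the coboundary $\sigma \mapsto \sigma(Q)-Q$, so the class $[\delta_P]$ is well defined; and since $P_n + P'_n$ is an $n$-division point of $P+P'$, we get $\delta_{P+P'} = \delta_P + \delta_{P'}$, so $\delta$ is a group homomorphism $G(K) \to H^1(\mathrm{Gal}(\overline{K}/K), G[n])$. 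Its kernel is exactly $nG(K)$: if $\delta_P$ is the coboundary $\sigma\mapsto\sigma(Q)-Q$ for some $Q \in G[n]$, then $P_n - Q$ is fixed by $\mathrm{Gal}(\overline{K}/K)$, so $P_n - Q \in G(K)$ and $P = n(P_n-Q) \in nG(K)$; the converse is clear. Hence $\delta$ descends to the asserted injection $G(K)/nG(K) \hookrightarrow H^1(\mathrm{Gal}(\overline{K}/K), G[n])$.

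For the second assertion, by the very definition of $K(G[n])$ the group $\mathrm{Gal}(\overline{K}/K(G[n]))$ fixes every point of $G[n]$, i.e. it acts trivially on the finite discrete module $G[n]$. For a profinite group acting trivially on such a module, the continuous $1$-cocycles are exactly the continuous homomorphisms and every coboundary is trivial, so
\[ H^1(\mathrm{Gal}(\overline{K}/K(G[n])), G[n]) = \mathrm{Hom}(\mathrm{Gal}(\overline{K}/K(G[n])), G[n]). \]

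Finally, I would invoke the inflation--restriction exact sequence for the normal subgroup $\mathrm{Gal}(\overline{K}/K(G[n]))$ of $\mathrm{Gal}(\overline{K}/K)$ (the low-degree terms of the Hochschild--Serre spectral sequence). Since $\mathrm{Gal}(\overline{K}/K(G[n]))$ acts trivially on $G[n]$, we have $G[n]^{\mathrm{Gal}(\overline{K}/K(G[n]))} = G[n]$ and this sequence reads
\[ 0 \to H^1(\mathrm{Gal}(K(G[n])/K), G[n]) \xrightarrow{\mathrm{inf}} H^1(\mathrm{Gal}(\overline{K}/K), G[n]) \xrightarrow{\mathrm{res}} H^1(\mathrm{Gal}(\overline{K}/K(G[n])), G[n]). \]
Thus $\ker(\mathrm{res})$ is the image of $\mathrm{inf}$, hence isomorphic to $H^1(\mathrm{Gal}(K(G[n])/K), G[n])$, which is killed by $\Delta_1$ by Lemma \ref{lemma1-GR}. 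Now if $[P] \in \ker(\mathrm{res}\circ\delta)$, then $\delta([P]) \in \ker(\mathrm{res})$, so $\delta(\Delta_1[P]) = \Delta_1\delta([P]) = 0$, and injectivity of $\delta$ forces $\Delta_1[P] = 0$ in $G(K)/nG(K)$; hence $[P]$ is a torsion element of order dividing $\Delta_1$, as claimed. The argument is essentially formal once Lemma \ref{lemma1-GR} is available; the only points requiring a little care are the exactness of the Kummer sequence (surjectivity of $[n]$ on $G(\overline{K})$) and the cocycle/coboundary bookkeeping defining $\delta$, neither of which presents a genuine obstacle.
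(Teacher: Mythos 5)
Your proposal is correct and follows essentially the same route as the paper: direct verification of the cocycle/coboundary bookkeeping for $\delta$, the identification of $H^1$ with $\mathrm{Hom}$ for the trivial action, the inflation--restriction sequence to identify $\ker(\mathrm{res})$ with $H^1(\mathrm{Gal}(K(G[n])/K),G[n])$, and then Lemma \ref{lemma1-GR}. The only cosmetic difference is that you isolate the injectivity of $\delta$ as a separate step and invoke it at the end, whereas the paper inlines the same computation (showing $\Delta_1 P_n - P'$ is $K$-rational, hence $\Delta_1 P \in nG(K)$) directly in the final paragraph.
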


\begin{proof}
We first check that the map $\delta$ is well-defined. 
Indeed, if $P \in G(K)$, then, for every $\sigma\in \mathrm{Gal}(\overline{K}/K)$, we have  \[n\delta_P(\sigma)=\sigma(n P_n)-n P_n=\sigma(P)-P=0,\] so $\delta_P(\sigma)\in G[n]$. The map $\delta_P$ is a $1$-cocycle since
\[\delta_P(\sigma \circ \tau) = (\sigma \circ \tau)(P_n) - P_n = \sigma(\tau(P_n) - P_n) + \sigma(P_n) - P_n = \sigma(\delta_P(\tau)) + \delta_P(\sigma)\]
for all $\sigma, \tau \in \mathrm{Gal}(\overline{K}/K)$. Moreover, we can prove that the class $[\delta_P]$ does not depend on the chosen representative in the class $[P]$. Indeed, if $Q\in G(K)$ is such that $Q=P+nQ'$ for some $Q'\in G(K)$ and $Q_n' \in G(\overline{K})$ is a point such that $nQ_n' = Q$, then $Q_n'=P_n+Q'+Q''$ for some $Q''\in G[n]$ and so, for every 
 $\sigma\in \mathrm{Gal}(\overline{K}/K)$, we have $\sigma(Q')=Q'$ and 
 \[\sigma(Q_n') - Q_n'=\sigma(P_n)-P_n+\sigma(Q')-Q'+\sigma(Q'')-Q''=\delta_P(\sigma)+\sigma(Q'')-Q'',\] hence in particular $[\delta_Q]=[\delta_P]\in H^1\left(\mathrm{Gal}(\overline{K}/K),G[n]\right).$ This establishes that $\delta$ is well-defined and that $[\delta_P]$ does not depend on the choice of $P_n$.

We now show that $\delta$ is a homomorphism: if $P, Q \in G(K)$, then
\[ (\delta_P+\delta_Q)(\sigma) = \sigma(P_n+Q_n)-(P_n+Q_n)\]
and so $[\delta_P] + [\delta_Q] = [\delta_{P}+\delta_{Q}] = [\delta_{P+Q}]$ since $n(P_n+Q_n) = P+Q$.

We notice now that  \[\mathrm{Hom}\left(\mathrm{Gal}(\overline{K}/K(G[n])), G[n]\right)=H^1\left(\mathrm{Gal}(\overline{K}/K(G[n])), G[n]\right)\] as the group action on $G[n]$ is trivial and also that, by the inflation-restriction exact sequence \cite[Chapter VII, \S 6, Proposition 4]{Serre_local}, the kernel of $\mathrm{res}$ is $H^1\left(\mathrm{Gal}(K(G[n])/K),G[n]\right)$ (for the definition of inflation, see \cite[Chapter I, \S 5, p. 47]{Neukirch}). Now, suppose that $[P]\in \mathrm{Ker}(\mathrm{res}\circ \delta)$. Then \[[\delta_P]\in H^1\left(\mathrm{Gal}(K(G[n])/K),G[n]\right),\] hence, by Lemma \ref{lemma1-GR},  for all $\sigma \in \mathrm{Gal}(\overline{K}/K)$, we have  $\Delta_1 \delta_P(\sigma)=\sigma(P')-P'$ for some fixed torsion point $P'\in G[n]$. Let $Q=\Delta_1 P_n -P'$. Then $Q\in G(K)$ as \[\sigma(Q)-Q=\sigma(\Delta_1 P_n)-\Delta_1 P_n-\sigma(P')+P'=\Delta_1 \delta_P(\sigma)-(\sigma(P')-P')=0\] for all $\sigma \in \mathrm{Gal}(\overline{K}/K)$. Finally, $\Delta_1 P=nQ\in n G(K)$, so $\Delta_1[P]=[\Delta_1  P]=0$ in $G(K)/n G(K)$, which proves the lemma.
\end{proof}

We are now ready to prove the main result of this section.
\begin{proof}[Proof of Theorem \ref{main-GR}]
Let $P \in G(K)$. By Lemma \ref{quotient-finite-Remond}, the quotient group 
\[\left(G(K)\cap \left(\mathrm{End}_K(G) \cdot P\right)_{\mathrm{div}}\right)/\mathrm{End}_K(G) \cdot P\]
is finite. We denote its exponent by $b_P$.

For the remaining part of the proof,  let $[P]\in G(K)/n G(K)$ denote the class of $P$. Let also $P_n$, $\delta$, and $\mathrm{res}$ be as in the statement of Lemma \ref{lemma2-GR}. We regard $\mathrm{res}$ as a map
\[ \mathrm{res}: H^1\left(\mathrm{Gal}(\overline{K}/K),G[n]\right)\rightarrow \mathrm{Hom}\left(\mathrm{Gal}(\overline{K}/K(G[n])), G[n]\right).\]
We set \[M_P=(\mathrm{res}\circ\delta)([P])\left(\mathrm{Gal}(\overline{K}/K(G[n]))\right)\subset G[n].\]
Note that
\begin{equation}\label{eq:M_P}
M_P = \{\tau(P_n) - P_n \mid \tau \in \mathrm{Gal}(\overline{K}/K(G[n]))\} = \mathrm{Im}(\rho_{P,n}).
\end{equation}

For the second inclusion in the statement of the theorem, we note that, if $\varphi \in \mathrm{End}_K(G)$ with $\varphi(P) = 0$, then $\varphi(P_n) \in G[n]$. Therefore, if $\tau \in \mathrm{Gal}(\overline{K}/K(G[n]))$, then
\[ \varphi(\tau(P_n)-P_n) = \tau(\varphi(P_n)) - \varphi(P_n) = 0.\]
We conclude that $\rho_{P,n}(\tau) \in G_P[n]$ and so
\begin{equation}\label{eq:second-inclusion}
    \mathrm{Im}(\rho_{P,n}) \subset G_P[n].
\end{equation}

Furthermore, the subgroup $M_P$ is stable under the action of $\mathrm{Gal}(\overline{K}/K)$ since, for $\sigma \in \mathrm{Gal}(\overline{K}/K)$ and $\tau \in \mathrm{Gal}(\overline{K}/K(G[n]))$, we have that $\sigma \circ \tau \circ \sigma^{-1} \in \mathrm{Gal}(\overline{K}/K(G[n]))$ and one can check that
\[\sigma((\mathrm{res} \circ \delta)([P])(\tau)) = (\mathrm{res} \circ \delta)([P])(\sigma \circ \tau \circ \sigma^{-1}).\]
Therefore, the quotient map \[\alpha_P: G\rightarrow G/M_P\] defines a $K$-isogeny, i.e. an isogeny that is defined over $K$. We now prove that there exists another $K$-isogeny
\[\alpha'_P: G/M_P \to G\]
of exponent dividing some integer $\Delta_2$, which depends only on $G$ and $K$ and is independent of $P$, where we recall that the exponent of an isogeny is defined as the exponent of its kernel. 

Indeed, by \cite[Lemma A.4]{BHP}, there are at most finitely many connected commutative algebraic groups up to $K$-isomorphism (i.e. isomorphism defined over $K$) that admit a $K$-isogeny to $G$ (or, equivalently, from $G$, see Lemma \ref{fact-isogeny-degree}). For each representative $G'$ of such a $K$-isomorphism class, we arbitrarily choose a $K$-isogeny from $ G'$ to $G $ and set $ \Delta_2 $ to be the least common multiple of the exponents of the (finitely many) chosen isogenies, which depends only on $G$ and $K$. We deduce that there exists a $K$-isogeny $\alpha'_P$ from $G/M_P$ to $G$ of exponent dividing $\Delta_2$ as desired.

We now show that $\alpha=\alpha_P'\circ \alpha_P\in \mathrm{End}_K(G)$ satisfies
\begin{equation}\label{eq:M_P-sandwich}
    \Delta_2 \mathrm{Ker}(\alpha)\subset M_P\subset \mathrm{Ker}(\alpha).
    \end{equation}
Indeed, while the second inclusion is clear, for the first one note that, if $Q\in \mathrm{Ker}(\alpha)$, then $\alpha_P(Q)\in \mathrm{Ker}(\alpha_P')$, hence $\alpha_P(\Delta_2 Q)=\Delta_2 \alpha_P(Q)=0$, so $\Delta_2 Q\in M_P$.

Now, for $\sigma\in\mathrm{Gal}(\overline{K}/K(G[n]))$, we have
\[ (\mathrm{res}\circ \delta)([\alpha(P)])(\sigma)=\delta_{\alpha(P)}(\sigma)=\alpha(\delta_P(\sigma)) = \alpha((\mathrm{res} \circ \delta)([P])(\sigma))\in \alpha(M_P)=\{0\}\] and hence $(\mathrm{res}\circ \delta)([\alpha(P)])=0$.
From this, using Lemmas \ref{lemma1-GR} and \ref{lemma2-GR}, we obtain that $\Delta_1 \alpha(P)\in n G(K)$ for some integer $\Delta_1\geq 1$ that depends only on $G$ and $K$. It follows that $\Delta_1 \alpha(P) = nP'$ where $P' \in G(K) \cap (\mathrm{End}_K(G) \cdot P)_{\mathrm{div}}$.

Recalling the definition of $b_P$ from the beginning of the proof, we deduce that $b_P P' \in \mathrm{End}_K(G)\cdot P$ and so
\[b_P \Delta_1 \alpha(P) = n(b_P P') \in n\left(\mathrm{End}_K(G)\cdot P\right).\]
We can therefore write $b_P  \Delta_1 \alpha(P)=n \beta(P)$ with $\beta\in \mathrm{End}_K(G)$, which is equivalent to the fact that $P\in \mathrm{Ker}(b_P \Delta_1\alpha-n\beta)$.

Therefore we have $G_P\subset \mathrm{Ker}(b_P \Delta_1\alpha-n\beta)$, thus $G_P[n]\subset \mathrm{Ker}(b_P \Delta_1\alpha)$.
This, together with \eqref{eq:M_P} and \eqref{eq:M_P-sandwich}, implies that
\[b_P \Delta_1\Delta_2 G_P[n]\subset \Delta_2 \mathrm{Ker}(\alpha)\subset M_P = \mathrm{Im}(\rho_{P,n}).\]
Using this and \eqref{eq:second-inclusion}, we deduce the statement of the theorem with $\Delta=\Delta_1\Delta_2$.
\end{proof}

\section{On R\'emond's generalised Lehmer's conjecture}\label{Conjecture-1-rank0}
\subsection{Preliminaries and R\'emond's conjecture}
In  \cite{Rem}, R\'emond states a very strong conjecture on finite rank subgroups of tori and abelian varieties. We recall the notation in \cite[Notations 1.1]{Rem}. Let $G$ be a power of $\mathbb{G}_m$ or an abelian variety defined over a number field $K$, then to a certain type of ample line bundle $\mathcal{L}$ on a certain type of compactification $\overline{G}$ of $G$, we can attach a height on $G$ (see \cite[Section 2(c)]{Rem}). In our context, we take $\overline{G}=G$ and $\mathcal{L}$ symmetric if $G$ is an abelian variety and $\overline{G}=(\mathbb{P}^1)^n$  and $\mathcal{L}=\mathcal{O}(1,\ldots,1)$ if $G=\mathbb{G}_m^n$. In this way, we obtain the height $h_G$ on $G$ (as defined in Section \ref{intro}).

Let $\Gamma\subset {G(\overline{K})}$ be a subgroup, where $\overline{K}$ denotes a fixed algebraic closure of $K$. For us, a subvariety of $G$ is always irreducible and defined over $\overline{K}$ (but not necessarily over $K$).
Following \cite[Section 3(b)]{Rem}, we say that a subvariety of $G$ is \emph{$\Gamma$-torsion} if it is of the form $B+\gamma$ where $B$ is a connected algebraic subgroup of $G$ and $\gamma\in \Gamma_{\mathrm{sat}}$. We say that a subvariety $V\subsetneq G$ is \emph{$\Gamma$-transverse} if it is not contained in any proper $\Gamma$-torsion subvariety of $G$. Notice that the notions of $\Gamma$-torsion and $\Gamma$-transverse variety coincide with the classical notions of torsion variety and weak-transverse variety respectively when $\Gamma$ has rank 0, and with those of translate and transverse variety respectively when $\Gamma=G(\overline{K})$.

Before stating R\'emond's conjecture, we have to introduce another two objects attached to a subvariety $V\subset G$, defined over $\overline{K}$. The first is the \emph{essential minimum of $V$}, \[\mu_{\mathrm{ess}}(V)=\inf\left\{\theta\in\mathbb{R}\mid \text{ the set }\{x\in V(\overline{K})\mid h_G(x)\leq \theta\}\text{ is Zariski dense in }V\right\}.\] This object generalises, in some sense, the notion of height for varieties. Indeed, if $V=\{P\}$ is a singleton, then we have $\mu_{\mathrm{ess}}(V)=h_G(P)$. 

For a subvariety $V \subsetneq G$, the notion of degree relative to 
an algebraic extension $L \subset \overline{K}$ of $K$ is instead generalised by the \emph{obstruction index of $V$ relative to $L$}, defined as \[\omega_L(V)=\min_{Z\in \mathcal{F}}\left\{\deg_{\mathcal{L}}(Z)^{1/(\dim G-\dim Z)}\right\}\] where  $\mathcal{F}$ is the set of all 
equidimensional Zariski closed subsets  $Z\subsetneq G$ containing $V$ and defined over $L$. Here $\deg_{\mathcal{L}}(Z)=\mathcal{L}^{\cdot \dim Z}\cdot \overline{Z}=\mathcal{L}\cdots (\mathcal{L}\cdot(\mathcal{L}\cdot \overline{Z})))$ is defined as the  $\dim Z$-th iterate of the intersection product of $\mathcal{L}$ with the closure $\overline{Z}$ of $Z$ in $\overline{G}$. If $V=\{P\}$ is a point, then considering the full Galois orbit of $P$ via $\mathrm{Gal}(\overline{K}/L)$ yields that $\omega_L(V)\leq [L(P):L]^{1/\dim G}$.

Finally, we say that a subgroup $\Gamma\subset {G(\overline{K})}$ is a \emph{Lehmer group} if there exists $c_{\Gamma}>0$ such that  $\omega_{K(\Gamma)}(V)\mu_{\mathrm{ess}}(V)\geq c_{\Gamma}$ for every subvariety $V\subset G$ which is $\Gamma$-transverse. We can now state R\'emond's conjecture in its more general form:
\begin{con}[{\cite[Conjecture 3.4]{Rem}}]\label{con-Rem-lehmer}
    Every subgroup of $G(\overline{K})$ of finite rank is a Lehmer group.
\end{con}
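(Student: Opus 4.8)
The plan is to handle the toric and abelian coordinates of $P$ in parallel, reduce everything to a single Kummer‑theoretic input, and then extract the uniformity in $P$ from Theorem \ref{main-GR}. Write $P = (P_T, P_A)$ with $P_T = (P_{T,1}, \dots, P_{T,t})$. By Lemma \ref{lem-decomp-Gamma-sat} one has $\Gamma_{\mathrm{sat}} = (\Gamma_{\mathbb{G}_m})_{\mathrm{sat}}^{t} \times (\Gamma_A)_{\mathrm{sat}}$ for finitely generated subgroups $\Gamma_{\mathbb{G}_m} \subset \mathbb{G}_m(K)$ and $\Gamma_A \subset A(K)$, so it is enough to produce a single $d$, independent of $P$, with $d P_T \in T(K) + (\Gamma_{\mathbb{G}_m})_{\mathrm{sat}}^{t}$ and $d P_A \in A(K) + (\Gamma_A)_{\mathrm{sat}}$.

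First I would fix convenient generators. For the abelian factor the Mordell--Weil theorem makes this easy: pick $\widehat{Q}_{A,1}, \dots, \widehat{Q}_{A,s_A}$ generating $A(K)$ whose first $r_A$ members generate $(\Gamma_A)_{\mathrm{sat}} \cap A(K)$. Since $\mathbb{G}_m(K)$ is not finitely generated, on the toric side I would instead apply Lemma \ref{lem:independent-generators-tor}: it yields $\mathbb{Z}$‑linearly independent $\widehat{P}_{T,1}, \dots, \widehat{P}_{T,s_T} \in \mathbb{G}_m(K)$ with $\langle \widehat{P}_{T,1}, \dots, \widehat{P}_{T,r_T}\rangle_{\mathrm{div}} = (\Gamma_{\mathbb{G}_m})_{\mathrm{sat}}$, with $\langle \widehat{P}_{T,1}, \dots, \widehat{P}_{T,s_T}\rangle_{\mathrm{div}}$ equal to the saturated closure of $\langle \Gamma_{\mathbb{G}_m}, P_{T,1}, \dots, P_{T,t}\rangle$, and with a control of the quotient $\bigl(\mathbb{G}_m^{s_T}(K) \cap (\mathrm{End}(\mathbb{G}_m^{s_T}) \cdot (\widehat{P}_{T,1}, \dots, \widehat{P}_{T,s_T}))_{\mathrm{div}}\bigr) / (\mathrm{End}(\mathbb{G}_m^{s_T}) \cdot (\widehat{P}_{T,1}, \dots, \widehat{P}_{T,s_T}))$ whose exponent is bounded by the number $B$ of roots of unity in $K$, independently of $P$. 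There is then an integer $n$, depending on $P$, such that all $P_{T,j}$ and $P_A$ lie in the field $K(\Delta_{T,n}, \Delta_{A,n})$ generated by the $n$‑division points of these generators, and such that $n P_{T,j}$ and $n P_A$ are $\mathbb{Z}$‑linear combinations of the $\widehat{P}_{T,i}$, resp.\ the $\widehat{Q}_{A,i}$; choosing $n$‑division points $\widehat{P}_{T,i,n}$ and $\widehat{P}_{A,n}$, one may then write $P_{T,j} = \sum_i a_{T,j,i} \widehat{P}_{T,i,n} + (\text{torsion of order dividing } n)$ and $P_A = f_A(\widehat{P}_{A,n}) + (\text{torsion of order dividing } n)$ with $a_{T,j,i} \in \mathbb{Z}$ and $f_A$ a $K$‑homomorphism.

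The core of the argument is to apply Theorem \ref{main-GR} to the auxiliary point $P' = (\widehat{P}_T, \widehat{P}_A)$ on $G' = \mathbb{G}_m^{\max\{s_T, 1\}} \times A^{s_A}$. By the $\mathbb{Z}$‑linear independence of the $\widehat{P}_{T,i}$ and Lemma \ref{dec-end}, the group $G'_{P'}$ from \eqref{def-G_p} splits as $\mathbb{G}_m^{s_T} \times B_A$ for an algebraic subgroup $B_A$ of $A^{s_A}$; moreover the exponent $b_{P'}$ entering Theorem \ref{main-GR} is bounded in terms of $G, \Gamma, K$ alone --- on the toric part this is precisely the bound $B$ above, and on the abelian part it depends only on $\widehat{P}_A$, hence on $A, \Gamma, K$. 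Theorem \ref{main-GR} then yields a $d_1 = d_1(G, \Gamma, K)$ with $d_1 G'_{P'}[n] \subset \mathrm{Im}(\rho_{P',n})$. For each $n$‑torsion point $R$ lying in the ``new directions'' part of $G'_{P'}[n]$ --- the subgroup $B'_T \times B'_A$ killed by projection to the first $r_T$, resp.\ $r_A$, coordinates --- the corresponding $\sigma_R \in \mathrm{Gal}(\overline{K}/K(G'[n]))$ fixes $\Delta_{T,n}$ and $\Delta_{A,n}$ pointwise, hence fixes $P$; feeding $\sigma_R(P) = P$ into the above expressions forces $d_1 f_A|_{B'_A} = n f'_A$ for some $K$‑homomorphism $f'_A$ (using $\cite[Theorem 5.13]{Milne}$) and the divisibility $\tfrac{n}{\gcd(n, d_1)} \mid a_{T,j,i}$ for $i > r_T$.

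To conclude I would use an ``approximate section'' of the projection $\pi : B_A \to A^{r_A}$: via Poincar\'e's complete reducibility theorem and Lemma \ref{lem:section-homomorphism} there are an $N \geq 1$ and a $K$‑homomorphism $\psi : A^{r_A} \to B_A$, depending only on $B_A$, with $N Q - (\psi \circ \pi)(Q) \in B'_A$ for every $Q$. Multiplying the expression for $P_A$ by $n d_1 N$ and splitting off $d_1 f_A((\psi \circ \pi)(\widehat{P}_A))$, which is an $\mathrm{End}_K(A)$‑combination of $\widehat{Q}_{A,1}, \dots, \widehat{Q}_{A,r_A}$ and hence lies in $(\Gamma_A)_{\mathrm{sat}}$, the remainder equals $n f'_A(\cdot) \in n A(K)$, so $d_1 N P_A \in (\Gamma_A)_{\mathrm{sat}} + A(K)$; on the toric side the divisibility just obtained expresses $d_1 N P_{T,j}$ as a sum of an element of $(\Gamma_{\mathbb{G}_m})_{\mathrm{sat}}$, an element of $\mathbb{G}_m(K)$, and a torsion point, so $d = d_1 N$ works. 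The step I expect to be the main obstacle is securing this uniformity in $P$: one must choose the generators so that the ``defect'' quotients appearing in Theorem \ref{main-GR} are bounded independently of $P$ --- this is the role of Lemma \ref{lem:independent-generators-tor} and of the precise dependence of $b_P$ on $P$ in Theorem \ref{main-GR} --- and one must cope with the fact that $K(\Gamma_{\mathrm{sat}})$ entangles the toric and abelian factors, so that the single integer $n$, and with it the Galois element $\sigma_R$, has to be made to work for all coordinates at once.
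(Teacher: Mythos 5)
You have not proved, or even addressed, the statement in question. The statement is Conjecture \ref{con-Rem-lehmer}, namely R\'emond's \cite[Conjecture 3.4]{Rem}: every finite rank subgroup $\Gamma$ of $G(\overline{K})$ is a Lehmer group, i.e.\ there is $c_\Gamma>0$ with $\omega_{K(\Gamma)}(V)\,\mu_{\mathrm{ess}}(V)\geq c_\Gamma$ for every $\Gamma$-transverse subvariety $V\subset G$. This is an open conjecture; the paper states it without proof, and indeed it cannot currently be proved, since already the special case $\Gamma=\{0\}$ with $V$ a point contains the classical Lehmer conjecture. No argument of the kind you outline could establish it: a height lower bound of Lehmer type is a quantitative Diophantine statement, whereas everything in your proposal is purely algebraic.

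What you have written is instead a (faithful and essentially correct) outline of the paper's proof of Theorem \ref{lem:arbitraryrank-var-ab} --- the existence of a uniform integer $d$ with $dP\in G(K)+\Gamma_{\mathrm{sat}}$ for $P\in G(K(\Gamma_{\mathrm{sat}}))\cap G(K)_{\mathrm{div}}$ --- following Sections \ref{prel-gen-Gamma-sat}--\ref{sec-end-of-proof}: the splitting of $\Gamma_{\mathrm{sat}}$, the choice of generators via Lemma \ref{lem:independent-generators-tor} and Mordell--Weil, the application of Theorem \ref{main-GR} to the auxiliary point $P'$, and the approximate section $\psi$ from Lemma \ref{lem:section-homomorphism}. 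That theorem sits on the opposite side of the logical implication from Conjecture \ref{con-Rem-lehmer}: the conjecture (via its weak form, Conjecture \ref{con-Rem-dobro}) implies Conjecture \ref{Rem-conj}, whose validity would in turn force the freeness statements that Theorem \ref{lem:arbitraryrank-var-ab} is designed to verify unconditionally. Proving the necessary condition does not prove the conjecture. You should either relabel your work as a proof of Theorem \ref{lem:arbitraryrank-var-ab}, or acknowledge that Conjecture \ref{con-Rem-lehmer} is not something one proves here.
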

This conjecture generalises several open problems. Indeed, as remarked in \cite[paragraph after D\'efinition 3.1]{Rem}, if $\Gamma=\{0\}$, then $\Gamma_{\mathrm{sat}}=G_{\mathrm{tor}}$ and $K(\Gamma)=K$, so Conjecture \ref{con-Rem-lehmer} implies the classical conjecture of Lehmer, while if instead $\Gamma=G_{\mathrm{tor}}$, it implies the relative version of Lehmer's conjecture. Notice that the definition of Lehmer group does not involve the rank of $\Gamma$. In particular, the question of whether $\Gamma=G(\overline{K})$ (which is not of finite rank) is a Lehmer group is related to the effective Bogomolov problem (see, for instance, the results in \cite{AD03,Ga}). 

It should then come as no surprise that Conjecture \ref{con-Rem-lehmer} remains largely open and that only a few specific cases of a weaker form of it are known. Such a weaker form is however enough to deduce Conjecture \ref{Rem-conj}, as shown in the next section. 

\subsection{A weak form of R\'emond's conjecture implies Conjecture \ref{Rem-conj}}\label{app-rem-conj}
In order to state the weak form of Conjecture \ref{con-Rem-lehmer} we are interested in, we need to give another definition.

We say that a subgroup $\Gamma\subset G(\overline{K})$ is a \emph{Dobrowolski group} if, for every $\varepsilon>0$, there exists $c_{\Gamma}(\varepsilon)>0$ such that  $\omega_{K(\Gamma)}(V)^{1+\varepsilon}\mu_{\mathrm{ess}}(V)\geq c_{\Gamma}(\varepsilon)$ for every subvariety $V\subset G$  which is $\Gamma$-transverse. Clearly, if $\Gamma$ is a Lehmer group, then it is a Dobrowolski group. Hence, one can consider the following weaker form of Conjecture \ref{con-Rem-lehmer}:
\begin{con}\label{con-Rem-dobro}
   Every subgroup of {$G(\overline{K})$ of finite rank} is a Dobrowolski group. 
\end{con}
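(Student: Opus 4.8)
The plan is to follow the standard architecture for relative Lehmer/Dobrowolski statements: reduce to a split, finitely generated situation, isolate the torsion contribution, and then attack the resulting inequality over an infinite Kummer tower by arithmetic (reduction modulo primes, Frobenius and Galois-theoretic) methods together with the Kummer-theoretic input of Theorem \ref{main-GR}. Throughout, one fixes a $\Gamma$-transverse subvariety $V \subsetneq G$ and must bound $\omega_{K(\Gamma)}(V)$ from above by a fixed power of $\mu_{\mathrm{ess}}(V)^{-1}$ with exponent as close to $1$ as desired.

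\textbf{Step 1 (reductions).} First I would observe that, since $K(\Gamma) \subseteq K(\Gamma_{\mathrm{sat}})$ and the obstruction index is non-increasing under field extension, it suffices to prove the inequality with $\omega_{K(\Gamma_{\mathrm{sat}})}(V)$ in place of $\omega_{K(\Gamma)}(V)$; by Lemma \ref{lem-deduction} (which does not change $\Gamma_{\mathrm{sat}}$, hence not $K(\Gamma_{\mathrm{sat}})$) one may then assume $\Gamma \subset G(K)$ finitely generated. Invariance of $\mu_{\mathrm{ess}}$ and of the obstruction index under isogeny, up to bounded multiplicative constants, lets one pass to $G = \mathbb{G}_{m,K}^t \times A$ with all geometric endomorphisms of $A$ defined over $K$, exactly as in Sections \ref{red-step-1}--\ref{splitting-prelim}; and by Lemma \ref{lem-decomp-Gamma-sat} the field $K(\Gamma_{\mathrm{sat}})$ is the compositum of $K(G_{\mathrm{tor}})$ with the fields of division points of a finite set of $K$-rational points produced by Lemma \ref{lem:independent-generators-tor} and the Mordell--Weil theorem.

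\textbf{Step 2 (base case plus Kummer descent).} The rank-$0$ instance of the conjecture --- that $G_{\mathrm{tor}}$ is a Dobrowolski group --- is known in many cases (Amoroso--David and Amoroso--Zannier for tori, and, under suitable hypotheses, Galateau, Carrizosa and others for abelian varieties), and I would take it as the base of an induction along the tower $K(G_{\mathrm{tor}}) \subseteq K(\Gamma_{\mathrm{sat}})$. Its successive layers are Kummer extensions obtained by adjoining $p$-power division points of finitely many $K$-points; Theorem \ref{main-GR} gives a lower bound, uniform in the $p$-power, for the size of the Kummer image, and combining this with a height--degree comparison in the style of Amoroso--Zannier/Pontreau shows that at a layer of degree $p^m$ the obstruction index drops by at most a fixed power of $p^m$. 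Summing these losses along the tower, one checks that they are absorbed into the slack $1+\varepsilon$ in the exponent. The hypothesis that $V$ is $\Gamma$-transverse is what keeps this from being vacuous: $V$ lies in no $B+\gamma$ with $\gamma \in \Gamma_{\mathrm{sat}}$, so it genuinely meets every Kummer direction and cannot be pushed into the locus where points of arbitrarily small height accumulate.

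\textbf{Step 3 (the main obstacle).} The decisive --- and currently unresolved --- point is the uniform Dobrowolski-type inequality over an \emph{infinite} Kummer tower. Already for $G = \mathbb{G}_m$ and $\Gamma = \langle 2 \rangle$ the statement is open: only bounded-degree sub-towers are controlled (see \cite{Amo}), because adjoining all $n$-th roots of a point of positive height simultaneously collapses the obstruction index and introduces the small points of $\Gamma_{\mathrm{sat}}$, so the reduction-modulo-$p$ and equidistribution inputs underlying Dobrowolski's method do not, on their own, beat the exponent $1+\varepsilon$ uniformly in the height of a generator. I expect a complete proof to require either a genuinely new Diophantine input at this layer or a quantitative form of equidistribution on semiabelian varieties that is not presently available; this is exactly why Conjecture \ref{con-Rem-dobro}, like Conjecture \ref{con-Rem-lehmer}, remains open beyond rank $0$.
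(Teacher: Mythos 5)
You have been asked to prove Conjecture \ref{con-Rem-dobro}, and the first thing to say is that the paper does not prove it either: it is stated as a conjecture, explicitly described as ``largely open'', and the only known cases recorded there are for $\Gamma = \{0\}$ or $\Gamma = G_{\mathrm{tor}}$ with $G$ a power of $\mathbb{G}_m$ or a CM abelian variety (Dobrowolski, Amoroso--Zannier, Amoroso--David, Delsinne, David--Hindry, Carrizosa). So there is no proof in the paper to compare yours against, and your text — as you yourself concede in Step 3 — is not a proof but a research plan whose decisive step is missing. That missing step is not a technical loose end; it is the entire content of the conjecture, which is open already for $G=\mathbb{G}_m$ and $\Gamma=\langle 2\rangle$.

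Concretely, the gap sits where you place it, but it also contaminates Step 2. Your Step 1 reductions are plausible (replacing $\omega_{K(\Gamma)}$ by the smaller $\omega_{K(\Gamma_{\mathrm{sat}})}$ only strengthens the target inequality, and Lemma \ref{lem-deduction} and Lemma \ref{lem-decomp-Gamma-sat} do let you normalise $\Gamma$ and describe $K(\Gamma_{\mathrm{sat}})$ as a compositum of division fields), although the claimed isogeny-invariance of $\mu_{\mathrm{ess}}$ and of the obstruction index ``up to bounded constants'' is asserted rather than checked. The real problem is the central claim of Step 2: that at each layer of the division tower the obstruction index drops by at most a controlled power and that ``summing these losses along the tower, one checks that they are absorbed into the slack $1+\varepsilon$''. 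No argument is given, and this is exactly where all known approaches fail: adjoining all division points of a point of positive height produces points of arbitrarily small positive height in $\Gamma_{\mathrm{sat}}$ and destroys the height--degree dichotomy underlying Dobrowolski's method, while Theorem \ref{main-GR} controls Galois images of division points (a statement about the size of Kummer extensions) and yields no height lower bound by itself. Moreover the rank-zero ``base case'' for abelian varieties is only known under CM-type hypotheses, so even the induction's starting point is conditional in general. Note finally that the paper's actual theorems (Theorems \ref{cor-main} and \ref{lem:arbitraryrank-var-ab}) prove a necessary condition for Conjecture \ref{Rem-conj}, which is itself only a consequence of Conjecture \ref{con-Rem-dobro} (and of Conjecture \ref{con-Rem-lehmer}); nothing in the paper, and nothing in your proposal, proves the conjecture you were given.
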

Conjecture \ref{con-Rem-dobro} is also largely open.
As explained in \cite[Th\'eor\`eme 3.3, its proof, and Section 3(c)]{Rem}, some of the few known cases are when $\Gamma$ is either $\{0\}$ or $G_{\mathrm{tor}}$ and $G$ is a power of $\mathbb{G}_m$ (see for instance \cite{Dob,AZ,AD01,Del}) or an abelian variety with complex multiplication (see \cite{DH,Car}).

We now show how Conjecture \ref{con-Rem-dobro} implies our Conjecture \ref{Rem-conj}.
\begin{proposition}
Let $G$ be either a power of $\mathbb{G}_m$ or an abelian variety defined over a number field $K$ with a fixed algebraic closure $\overline{K}$. Let $\Gamma\subset G(\overline{K})$ be a finite rank subgroup. Then $\Gamma_{\mathrm{sat}}$ is also of finite rank and Conjecture \ref{con-Rem-dobro} for $G$, $\Gamma_{\mathrm{sat}}$, and $K$ implies Conjecture \ref{Rem-conj} for $G$, $\Gamma$, and $K$.
\end{proposition}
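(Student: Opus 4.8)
The plan has two essentially independent parts: the statement that $\Gamma_{\mathrm{sat}}$ has finite rank, which is elementary, and the implication, which I would prove by reducing to a pointwise height lower bound and then following R\'emond's argument. For the finite-rank statement, since $G$ is a power of $\mathbb{G}_m$ or an abelian variety, $\mathrm{End}(G)$ is a finitely generated $\mathbb{Z}$-module (it is $\mathrm{Mat}_{n\times n}(\mathbb{Z})$ in the toric case and an order in a finite-dimensional semisimple $\mathbb{Q}$-algebra in the abelian case). Fixing a $\mathbb{Z}$-basis $\varphi_1,\dots,\varphi_m$ of $\mathrm{End}(G)$ and a maximal $\mathbb{Z}$-linearly independent subset $\gamma_1,\dots,\gamma_r$ of $\Gamma$, one checks that $\mathrm{End}(G)\cdot\Gamma$ is contained in the division closure of the subgroup generated by the $mr$ points $\varphi_i(\gamma_j)$, hence has rank at most $mr$; and since the natural map $\Delta\otimes_{\mathbb{Z}}\mathbb{Q}\to\Delta_{\mathrm{div}}\otimes_{\mathbb{Z}}\mathbb{Q}$ is an isomorphism for every subgroup $\Delta$, passing to the division closure does not change the rank. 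Thus $\Gamma_{\mathrm{sat}}=(\mathrm{End}(G)\cdot\Gamma)_{\mathrm{div}}$ has finite rank.

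For the implication, assume $\Gamma_{\mathrm{sat}}$ is a Dobrowolski group and fix a finite extension $L/K(\Gamma_{\mathrm{sat}})$. First I would note that, because $(\Gamma_{\mathrm{sat}})_{\mathrm{sat}}=\Gamma_{\mathrm{sat}}$, a subvariety is $\Gamma_{\mathrm{sat}}$-transverse if and only if it is $\Gamma$-transverse, and that $\Gamma_{\mathrm{sat}}\subseteq G(K(\Gamma_{\mathrm{sat}}))\subseteq G(L)$. It then suffices to prove the \emph{pointwise} statement: there is $c>0$, depending only on $G$, $\Gamma$, $K$, with $\omega_{K(\Gamma_{\mathrm{sat}})}(\{P\})^{2}\,h_G(P)\geq c$ for every $P\in G(\overline{K})\setminus\Gamma_{\mathrm{sat}}$. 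Indeed, for $P\in G(L)\setminus\Gamma_{\mathrm{sat}}$ one has $K(\Gamma_{\mathrm{sat}})(P)\subseteq L$, so $\omega_{K(\Gamma_{\mathrm{sat}})}(\{P\})\leq[L:K(\Gamma_{\mathrm{sat}})]^{1/\dim G}$, and the pointwise bound yields $h_G(P)\geq c\,[L:K(\Gamma_{\mathrm{sat}})]^{-2/\dim G}=:c_L>0$, which is Conjecture \ref{Rem-conj}.

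To prove the pointwise statement I would argue as follows. If $\{P\}$ is $\Gamma_{\mathrm{sat}}$-transverse, it is immediate from the Dobrowolski group property applied to $V=\{P\}$ with $\varepsilon=1$, using $\mu_{\mathrm{ess}}(\{P\})=h_G(P)$. If $\{P\}$ is not $\Gamma_{\mathrm{sat}}$-transverse, let $H+\gamma$ (with $\gamma\in\Gamma_{\mathrm{sat}}$ and $H$ a connected algebraic subgroup of minimal dimension) be a $\Gamma_{\mathrm{sat}}$-torsion subvariety containing $P$; then $\dim H\geq 1$, otherwise $P=\gamma\in\Gamma_{\mathrm{sat}}$. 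The point $Q:=P-\gamma$ lies in $H(\overline{K})\setminus\Gamma_{\mathrm{sat}}$ and, by minimality of $\dim H$, generates a singleton that is transverse \emph{inside} $H$; moreover $H$ is again a power of $\mathbb{G}_m$ (every subtorus of $\mathbb{G}_m^n$ is split, since its character lattice carries the trivial Galois action) or an abelian variety, and all connected algebraic subgroups of $G$ are defined over a fixed finite extension of $K$. The plan is then to apply the same pointwise bound to $H$ — this is where the induction on $\dim G$ of \cite[Th\'eor\`eme 3.3 and Section 3(c)]{Rem} enters — and to transfer the resulting lower bound for $h_G(Q)$ back to one for $h_G(P)$, and the obstruction-index estimate for $\{Q\}$ in $H$ back to one for $\omega_{K(\Gamma_{\mathrm{sat}})}(\{P\})$.

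The main obstacle is exactly this descent. A point outside $\Gamma_{\mathrm{sat}}$ need not generate a $\Gamma_{\mathrm{sat}}$-transverse singleton — for instance $P=(2,3)$ in $\mathbb{G}_m^2$ with $\Gamma=\langle(2,1)\rangle$ lies in the proper $\Gamma_{\mathrm{sat}}$-torsion subvariety $\{2\}\times\mathbb{G}_m$, yet $P\notin\Gamma_{\mathrm{sat}}$ — and no subvariety of $H+\gamma$ is $\Gamma_{\mathrm{sat}}$-transverse in $G$, so one is forced to peel off the component $\gamma\in\Gamma_{\mathrm{sat}}$, pass to the minimal subgroup $H$, and run the argument there. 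The delicate points are that this must be done while keeping the essential minimum and the obstruction index comparable to $h_G(P)$ and to a power of $[L:K(\Gamma_{\mathrm{sat}})]$, and that every constant must remain uniform in $P$ as $H$ ranges over the connected algebraic subgroups of $G$ that can occur; this uniformity, together with the passage between $h_G(P)$ and $h_G(Q)$, is where essentially all the work lies and is precisely the content of R\'emond's argument that I would reproduce. Once the pointwise bound is available, the remaining steps — the finite-rank statement and the degree estimate above — are routine.
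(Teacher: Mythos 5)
Your treatment of the finite-rank statement and the reduction of Conjecture \ref{Rem-conj} to a pointwise lower bound $\omega_{K(\Gamma_{\mathrm{sat}})}(\{P\})^{1+\varepsilon}h_G(P)\geq c$ is correct and matches the paper. You have also correctly located the crux: a point $P\notin\Gamma_{\mathrm{sat}}$ need not generate a $\Gamma_{\mathrm{sat}}$-transverse singleton, so the Dobrowolski hypothesis cannot be applied to $\{P\}$ directly. But at exactly this point the proposal stops: you describe a descent to the minimal $\Gamma_{\mathrm{sat}}$-torsion subvariety $H+\gamma$ and then write that the uniformity of the constants and the transfer of the height and obstruction-index bounds ``is precisely the content of R\'emond's argument that I would reproduce'' --- without reproducing it. Since you yourself identify this as ``where essentially all the work lies,'' the proof is not complete.

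There is also a structural problem with the descent route you sketch. You propose to apply the pointwise bound inductively to $Q=P-\gamma$ inside $H$, but the hypothesis of the proposition is only that $\Gamma_{\mathrm{sat}}$ is a Dobrowolski group \emph{in $G$}; it says nothing about subgroups of a proper algebraic subgroup $H$ being Dobrowolski groups in $H$, so the inductive hypothesis you would need is not available. (There are further unaddressed issues: the saturation of $\Gamma_{\mathrm{sat}}\cap H(\overline{K})$ computed with $\mathrm{End}(H)$ need not coincide with $\Gamma_{\mathrm{sat}}\cap H(\overline{K})$, and transversality of $\{Q\}$ in $H$ must be checked against that group.) The paper avoids all of this by invoking a single black box, \cite[Th\'eor\`eme 3.7]{Rem}: if $\Gamma_{\mathrm{sat}}$ is a Dobrowolski group, then for every non-$\Gamma_{\mathrm{sat}}$-torsion $V$ one has a lower bound on $\mu_{\mathrm{ess}}(V)$ in terms of a \emph{relative} obstruction index $\omega^H_{K(\Gamma_{\mathrm{sat}})}(V)$ and $(\deg_{\mathcal{L}}H)^{s(G)\varepsilon}$, where $H$ is the smallest $\Gamma_{\mathrm{sat}}$-torsion subvariety containing $V$. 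The only things left to verify are that such a minimal $H$ exists (irreducible components of intersections of $\Gamma_{\mathrm{sat}}$-torsion subvarieties are again $\Gamma_{\mathrm{sat}}$-torsion, proved via Lemma \ref{lem:section-homomorphism}), that $H$ is defined over $K(\Gamma_{\mathrm{sat}})$, and the elementary estimate $(\omega^H_{K(\Gamma_{\mathrm{sat}})}(\{P\}))^{1+\varepsilon}(\deg_{\mathcal{L}}H)^{s(G)\varepsilon}\leq[L:K(\Gamma_{\mathrm{sat}})]^{1+\varepsilon}$ for $\varepsilon<1/(2\dim G)$. To repair your write-up, either cite Th\'eor\`eme 3.7 and carry out these three verifications, or genuinely carry out the descent with all the uniformity claims proved; as it stands, the central step is missing.
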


\begin{proof}
We know that $\mathrm{End}(G)$ is a finitely generated $\mathbb{Z}$-module, so if $\Gamma$ has finite rank, then $\Gamma_{\mathrm{sat}}$ also has finite rank.
By Conjecture \ref{con-Rem-dobro}, $\Gamma_{\mathrm{sat}}$ is a Dobrowolski group.
Let now $L/K(\Gamma_{\mathrm{sat}})$ be a finite extension and let $P\in G(L)\setminus \Gamma_{\mathrm{sat}}$.
Consider the subvariety $V=\{P\}$.  As $P\not\in \Gamma_{\mathrm{sat}}$ and as, by Remark \ref{rmk:sat-div-sat}, $(\Gamma_{\mathrm{sat}})_{\mathrm{sat}} = \Gamma_{\mathrm{sat}}$, $V$ is not $\Gamma_{\mathrm{sat}}$-torsion.

We now show that every irreducible component of an intersection of two $\Gamma_{\mathrm{sat}}$-torsion subvarieties is again $\Gamma_{\mathrm{sat}}$-torsion. Suppose that $\gamma_1 + B_1$ and $\gamma_2 + B_2$ are two $\Gamma_{\mathrm{sat}}$-torsion subvarieties such that $(\gamma_1 + B_1) \cap (\gamma_2 + B_2)$ is non-empty and let $f: G \to G/B_1 \times G/B_2 =: G'$ denote the homomorphism induced by the two quotient maps, then, by Lemma \ref{lem:section-homomorphism} (which, by an analogous and even simpler proof, also holds for powers of $\mathbb{G}_m$ instead of abelian varieties), there is a homomorphism $\varphi: G' \to G$ such that $f \circ \varphi$ is multiplication by some integer $N \geq 1$ on $f(G)$. Since $f^{-1}(\gamma_1 + B_1,\gamma_2 + B_2) \neq \emptyset$, it follows that
\[ f(\varphi(\gamma_1 + B_1, \gamma_2 + B_2)) = (N\gamma_1+B_1,N\gamma_2+B_2).\]
The point $\gamma' = \varphi(\gamma_1+B_1,\gamma_2+B_2)$ belongs to $\Gamma_{\mathrm{sat}}$ and $(\gamma_1 + B_1) \cap (\gamma_2 + B_2)$ is a union of irreducible components of the pre-image of $\gamma' + (B_1 \cap B_2)$ under multiplication by $N$, which proves the claim.

By what we have just shown, there exists a smallest $\Gamma_{\mathrm{sat}}$-torsion subvariety of $G$ containing $V$, which we denote by $H$ and which is defined over $K(\Gamma_{\mathrm{sat}})$ since every algebraic subgroup of $G$ is defined over $K(G_{\mathrm{tor}}) \subset K(\Gamma_{\mathrm{sat}})$. We define 
\[\omega_{K(\Gamma_{\mathrm{sat}})}^H(V)=\min_{Z\in \mathcal{F}_H}\left\{\left(\frac{\deg_{\mathcal{L}}Z}{\deg_{\mathcal{L}}H}\right)^{\frac{1}{\dim H-\dim Z}}\right\}\]
where  $\mathcal{F}_H$ is the set of all equidimensional Zariski closed subsets $Z\subsetneq H$ containing $V$ and defined over $K(\Gamma_{\mathrm{sat}})$. By \cite[Th\'eor\`eme 3.7]{Rem}, for every $\varepsilon>0$, there exists $c'_{\Gamma_{\mathrm{sat}}}(\varepsilon)>0$ (not depending on $V$ or $H$) such that 
\begin{equation}\label{eq-rem-thm-3.7}
    \left(\omega_{K(\Gamma_{\mathrm{sat}})}^H(V)\right)^{1+\varepsilon}(\deg_{\mathcal{L}}H)^{s(G) \varepsilon}\mu_{\mathrm{ess}}(V)\geq c'_{\Gamma_{\mathrm{sat}}}(\varepsilon)
\end{equation} where $s(G)=1$ if $G$ is a power of $\mathbb{G}_m$ and $s(G)=2$ if $G$ is an abelian variety. Since $V$ is a singleton, we have $\mu_{\mathrm{ess}}(V)=h_G(P)$. Moreover, since $\deg_{\mathcal{L}}H\geq 1$, we have \begin{align*}
     (\omega^H_{K(\Gamma_{\mathrm{sat}})}(V))^{1+\varepsilon}(\deg_{\mathcal{L}}H)^{s(G)\varepsilon} &\leq [K(\Gamma_{\mathrm{sat}})(P):K(\Gamma_{\mathrm{sat}})]^{\frac{(1+\varepsilon)}{\dim H}}(\deg_{\mathcal{L}}H)^{2\varepsilon-\frac{(1+\varepsilon)}{\dim H}} \\
     &\leq [K(\Gamma_{\mathrm{sat}})(P):K(\Gamma_{\mathrm{sat}})]^{\frac{(1+\varepsilon)}{\dim H}}\\
     &\leq [L:K(\Gamma_{\mathrm{sat}})]^{1+\varepsilon}
 \end{align*}
 for every $\varepsilon < 1/(2\dim G)$. Hence \eqref{eq-rem-thm-3.7} gives 
 \[h_G(P)\geq \frac{c'_{\Gamma_{\mathrm{sat}}}(\varepsilon)}{[L:K(\Gamma_{\mathrm{sat}})]^{1+\varepsilon}},\]
 which proves the result if we fix, for instance, $\varepsilon = 1/(4\dim G)$.
\end{proof}

We now show how the works of Amoroso-Zannier and Baker-Silverman yield an unconditional proof of the analogue of Conjecture \ref{Rem-conj} for products of powers of $\mathbb{G}_m$ and abelian varieties with CM in the case where $\Gamma$ is of rank zero.

\begin{cor}\label{cor:torus-x-cm}
Let $G$ be the product of the torus $T = \mathbb{G}^n_{m,K}$ and an abelian variety $A$ with CM, both defined over a number field $K$. Then the conclusion of Conjecture \ref{Rem-conj} holds for $G$ and $K$ in the case where $\Gamma$ has rank zero.
\end{cor}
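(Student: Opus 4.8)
The plan is to reduce the statement to a height lower bound for non‑torsion points over an abelian extension of a number field, and then to apply, factor by factor, the known cases of property (B): the relative Dobrowolski lower bound of Amoroso--Zannier for $\mathbb{G}_m$ and the theorem of Baker--Silverman for CM abelian varieties.

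Since $\Gamma$ has rank $0$ we have $\Gamma_{\mathrm{sat}} = G_{\mathrm{tor}}$, so we must produce, for each finite extension $L/K(G_{\mathrm{tor}})$, a constant $c_L > 0$ with $h_G(P) \geq c_L$ for all $P \in G(L) \setminus G_{\mathrm{tor}}$. First I would put $L$ into a convenient shape: choosing $\theta_1, \dots, \theta_m \in \overline{K}$ with $L = K(G_{\mathrm{tor}})(\theta_1, \dots, \theta_m)$ and observing that this field equals $K_1(G_{\mathrm{tor}})$ for the number field $K_1 = K(\theta_1, \dots, \theta_m)$, then enlarging $K_1$ to a number field $K_2 \supseteq K_1$ over which all geometric endomorphisms of $A$ are defined. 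This gives $L \subseteq K_2(G_{\mathrm{tor}}) = K_2(A_{\mathrm{tor}}) \cdot K_2(\mu_\infty)$.

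The key point is that $K_2(G_{\mathrm{tor}})/K_2$ is abelian. For the toric factor this is classical, as restriction embeds $\mathrm{Gal}(K_2(\mu_\infty)/K_2)$ into $\mathrm{Gal}(\mathbb{Q}(\mu_\infty)/\mathbb{Q}) \cong \widehat{\mathbb{Z}}^{\times}$. For the abelian factor, since $A$ has CM by an order in a CM field $F$ with $[F:\mathbb{Q}] = 2 \dim A$ and all its endomorphisms are defined over $K_2$, the group $\mathrm{Gal}(\overline{K}/K_2)$ acts on each Tate module $V_\ell A$ commuting with the action of $F \otimes_{\mathbb{Q}} \mathbb{Q}_\ell$; as $V_\ell A$ is free of rank $1$ over $F \otimes_{\mathbb{Q}} \mathbb{Q}_\ell$, the Galois image is forced into the abelian group $(F \otimes_{\mathbb{Q}} \mathbb{Q}_\ell)^{\times}$, so $K_2(A_{\mathrm{tor}})/K_2$ is abelian as well. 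Hence $L \subseteq K_2^{\mathrm{ab}}$.

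It then remains to bound heights over $K_2^{\mathrm{ab}}$. For $P = (P_T, P_A) \in G(L) \subseteq G(K_2^{\mathrm{ab}})$ one has $h_G(P) = h(P_T) + h_{\mathcal{L}}(P_A)$, and $P$ is torsion precisely when both $P_T$ and $P_A$ are. By the relative Dobrowolski lower bound over abelian extensions (\cite{AZ}) there is $c_1 = c_1(K_2) > 0$ with $h(\beta) \geq c_1$ for every $\beta \in (K_2^{\mathrm{ab}})^{\times}$ that is not a root of unity, so $h(P_T) \geq c_1$ whenever $P_T \notin T_{\mathrm{tor}}$. By the Baker--Silverman lower bound for the canonical height on CM abelian varieties over abelian extensions (\cite{BS}) there is $c_2 = c_2(A, K_2, \mathcal{L}) > 0$ with $h_{\mathcal{L}}(Q) \geq c_2$ for every non-torsion $Q \in A(K_2^{\mathrm{ab}})$, so $h_{\mathcal{L}}(P_A) \geq c_2$ whenever $P_A \notin A_{\mathrm{tor}}$. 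If $P$ is not torsion, at least one of $P_T, P_A$ is not torsion, hence $h_G(P) \geq \min\{c_1, c_2\} =: c_L > 0$. The only genuinely delicate step is the reduction to an abelian base field in the third paragraph; granting that $L \subseteq K_2^{\mathrm{ab}}$, the conclusion is a direct combination of \cite{AZ} and \cite{BS} on the two factors of $G$.
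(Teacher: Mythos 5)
Your proof is correct and follows essentially the same route as the paper's: reduce to the observation that $\Gamma_{\mathrm{sat}}=G_{\mathrm{tor}}$, enlarge the base field so that all geometric endomorphisms of $A$ are defined over it and so that it absorbs the finite extension $L$, show that the resulting torsion field is abelian over the enlarged base, and then apply Amoroso--Zannier on the toric factor and Baker--Silverman on the CM abelian factor. The only difference is cosmetic: the paper justifies the abelianness of $K'(A_{\mathrm{tor}})/K'$ by citing Serre--Tate and decomposing into geometrically simple abelian subvarieties, whereas you give the equivalent commutant argument on the Tate module; just note that for non-isotypic $A$ the algebra $F$ is a product of CM fields rather than a single CM field of degree $2\dim A$, which does not affect the argument since $V_\ell A$ remains free of rank one over $F\otimes_{\mathbb{Q}}\mathbb{Q}_\ell$.
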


\begin{proof}
After replacing $K$ by a finite extension $K'$ and $G$, $T$, and $A$ by their base changes to $K'$, we can assume that all geometric endomorphisms of $A$ are defined over $K'$. Since every abelian subvariety of $A$ is the image of some geometric endomorphism of $A$, also all abelian subvarieties of $A$ and all their geometric endomorphisms are defined over $K'$.

If $\dim A = 0$, then
\[K'(\Gamma_{\mathrm{sat}}) = K'((\mathbb{G}_m)_{\mathrm{tor}})\]
and the extension $K'((\mathbb{G}_m)_{\mathrm{tor}})/K'$ is abelian, so the conclusion of the corollary follows from \cite[Theorem 1.1]{AZ}. We therefore assume from now on that $\dim A > 0$.

Because of the Weil pairing and the fact that $A$ admits a polarisation over $K$, we have that
\[K'(\Gamma_{\mathrm{sat}}) = K'(G_{\mathrm{tor}}) = K'((\mathbb{G}_m)_{\mathrm{tor}},A_{\mathrm{tor}}) = K'(A_{\mathrm{tor}}).\]
Furthermore, the extension $K'(A_{\mathrm{tor}})/K'$ is abelian: indeed, by \cite[Corollary 2 on p.~502]{Serre_Tate}, the image of the $\ell$-adic Galois representation attached to a geometrically simple abelian subvariety of $A$ is commutative for every prime $\ell$ and one can then use the fact that any point in $A_{\mathrm{tor}}$ is a sum of torsion points of geometrically simple abelian subvarieties of $A$ whose order is a prime power.

Let now $L/K(\Gamma_{\mathrm{sat}})$ be a finite extension and let $P\in G(L)\setminus \Gamma_{\mathrm{sat}}$. Set $L'=K' L$. Write $P=(P_T,P_A)$, where $P_T\in T(L)$ and $P_A\in A(L)$, and recall that $h_G(P)=h(P_T)+h_{\mathcal{L}}(P_A)$ where $h(P_T)$ is the sum of the Weil heights of the coordinates of $P_T$ and  $h_{\mathcal{L}}$ is the N\'eron-Tate height on $A$ attached to an ample symmetric line bundle $\mathcal{L}$ on $A$.  Then the lower bound for $h_G(P)$ is obtained as follows. Since $P$ is not torsion, either $P_T$ or $P_A$ is not torsion either. If $P_T$ is not torsion, we use again \cite[Theorem 1.1]{AZ}. If $P_A$ is not torsion, we first choose a finite extension $K''/K'$ such that $L'=K''(A_{\mathrm{tor}}) \subset (K'')^{\mathrm{ab}}$ and then apply \cite[Theorem 0.1]{BS}.
\end{proof}

\end{document}